\numberwithin{equation}{section}
\def\proof{\smallskip\noindent {\it Proof: \ }}
\def\endproof{\hfill$\square$\medskip}
\newtheorem{theorem}{Theorem}[section]
\newtheorem{proposition}[theorem]{Proposition}
\newtheorem{question}[theorem]{Question}
\newtheorem{corollary}[theorem]{Corollary}
\newtheorem{lemma}[theorem]{Lemma}
\newtheorem{problem}[theorem]{Problem}
\theoremstyle{definition}
\newtheorem{definition}[theorem]{Definition}
\newtheorem{remark}[theorem]{Remark}
\newcommand{\conv}{\mathrm{conv}}
\newcommand{\lk}{\mathrm{lk}}
\newcommand{\st}{\mathrm{st}}
\newcommand{\aff}{\mathrm{aff}}
\newcommand{\R}{\mathbb{R}}
\newcommand{\F}{\mathcal{F}}
\newcommand{\T}{\mathcal{T}}
\title{Transversal numbers of simplicial polytopes, spheres, and pure complexes.}
\author{
	Isabella Novik\thanks{Research of IN is partially\textsl{} supported by NSF grant  DMS-2246399.}\\
	\small Department of Mathematics\\[-0.8ex]
	\small University of Washington\\[-0.8ex]
	\small Seattle, WA 98195-4350, USA\\[-0.8ex]
	\small \texttt{novik@uw.edu}
	\and 
	Hailun Zheng\thanks{Research of HZ is partially\textsl{} supported by NSF grant DMS-2246793.} \\
	\small Department of Mathematics\\[-0.8ex]
	\small University of Hawai`i at M\={a}noa\\[-0.8ex]
	\small 2565 McCarthy Mall, Honolulu, HI 96822, USA \\[-0.8ex]
	\small \texttt{hailunz@hawaii.edu}
}
\begin{document}
\maketitle
		\begin{abstract}
		We prove new upper and lower bounds on transversal numbers of several classes of simplicial complexes. Specifically, we establish an upper bound on the transversal numbers of pure simplicial complexes in terms of the number of vertices and the number of facets, and then provide constructions of pure simplicial complexes whose transversal numbers come close to this bound.
		We introduce a new family of $d$-dimensional polytopes that could be considered as ``siblings'' of cyclic polytopes and show that the transversal ratios of such odd-dimensional polytopes are $2/5-o(1)$. The previous record for the transversal ratios of $(2k+1)$-polytopes was $1/(k+1)$. Finally, we construct infinite families of $3$-, $4$-, and $5$-dimensional simplicial spheres with transversal ratios converging to $4/7$, $1/2$, and $6/11$, respectively. The previous record was $11/21$, $2/5$, and $1/2$, respectively.
		\end{abstract}

\section{Introduction} The goal of this paper is to establish several upper and lower bounds on the transversal numbers and transversal ratios of pure simplicial complexes, simplicial polytopes, and simplicial spheres.	

A transversal of a hypergraph $H=(V,E)$ with vertex set $V$ and edge set $E$ is defined as a subset of $V$ that intersects {\em all} edges of $H$. The transversal number of $H$, which we denote by $T(H)$, is the minimum cardinality of a transversal of $H$, and the transversal ratio of $H$, $\tau(H)$, is $T(H)/|V|$. 

The class of $d$-uniform hypergraphs is closely related to the class of pure $(d-1)$-dimensional simplicial complexes:  if $\Delta$  is a pure simplicial $(d-1)$-dimensional complex with vertex set $V$, then the hypergraph  $H(\Delta)=(V,\F)$ whose set of edges $\F$ is the set of facets of $\Delta$, is a $d$-uniform hypergraph. Conversely, every $d$-uniform hypergraph $H=(V,E)$ determines a pure $(d-1)$-dimensional simplicial complex on vertex set $V$ whose set of facets is given by $E$. For a pure simplicial complex $\Delta$, we define 
the transversal number of $\Delta$, $T(\Delta)$, and the transversal ratio of $\Delta$, $\tau(\Delta)$, as $T(H(\Delta))$ and $\tau(H(\Delta))$, respectively. Throughout this paper, we will use the language of pure simplicial complexes.

The starting point of our paper is the following problem raised by Tur\'an:

\begin{problem} {\rm\cite{Turan}}
	Determine $f(n, m, d)=\max T(H)$, where $H$ ranges over all $d$-uniform hypergraphs with $n$ vertices and with $m$ edges (equivalently, where $H$ ranges over all pure $(d-1)$-dimensional simplicial complexes with $n$ vertices and $m$ facets). 
\end{problem}

A lot of work has been done on Tur\'an's problem in the case where $m$ is at most linear in $n$, especially in relation to Tuza's problem; see \cite{Alon90} and \cite{ChvatalMcDiarmid}. Here, motivated by results obtained and questions raised in \cite{Alonetall02,BriggsDobbinsLee,Heise-etal,LeeNevo}, we are interested in transversal numbers of simplicial complexes with interesting geometric and topological properties, such as, for instance, pseudomanifolds, Eulerian complexes, and simplicial spheres. (We will discuss definitions of these objects in Section 2. For now, we merely mention that all simplicial spheres are Eulerian complexes, 
 and all pseudomanifolds are pure complexes.) According to the Lower Bound Theorem \cite{Barnette-LBT-pseudomanifolds, Barnette73} and the Upper Bound Theorem \cite{McMullen70, Stanley75}, a simplicial sphere of dimension $d-1\geq 2$ with $n$ vertices has at least $(d-1)n-(d^2-d-2)$ and at most  $\binom{n-\lfloor \frac{d+1}{2}\rfloor}{n-d}+\binom{n-\lfloor \frac{d+2}{2}\rfloor}{n-d}$ facets. For this reason, we are primarily interested in the $m\gg n$ case of Tur\'an's problem, and, especially, in the case where $m$ is about $n^{\lfloor d/2\rfloor}$.

Not much appears to be known about this case of Tur\'an's problem. Among our results in this direction are:
\begin{enumerate}
\item If $d\geq 2$ and $\Delta$ is a pure $(d-1)$-dimensional simplicial complex with $n$ vertices and $m$ facets, then for $n$ sufficiently large, $T(\Delta)\leq n(1-\frac{1}{e}m^{-1/d})+1$; see Theorem \ref{lm: pure complex transversal}. In particular, there is an absolute constant $c>0$ independent of $n$ and $d$ such that for every Eulerian complex $\Delta$  with $n$ vertices, $T(\Delta)\leq n-cn^{1/2}$ if $d$ is even, and $T(\Delta)\leq n-cn^{(d+1)/(2d)}$ if $d$ is odd; see Corollary \ref{cor:Eulerian}.

\item Conversely,  for every $d\geq 2$ and $n\ll m\ll n^{(d+1)/2}$,  there exists a family of pure $(d-1)$-dimensional complexes $\{\Delta^{n,m}_d\}$ such that $\Delta^{n,m}_d$ has $n$ vertices, $C_d m$ facets, and  $T(\Delta^{n,m}_d)\geq n\left(1-C_d'n^{1/(d-1)}m^{-1/(d-1)}\right)$, where $C_d, C'_d>0$ are absolute constants independent of $n$ and $m$; see Proposition \ref{prop: pure}.
\end{enumerate}

In the second part of the paper, our focus is specifically on 
 transversal numbers of simplicial spheres and (boundary complexes of) simplicial polytopes. It follows easily from the Four Color Theorem that the transversal number of any simplicial $2$-sphere with $n$ vertices is at most $n/2$ (see \cite[Prop.~3.1]{BriggsDobbinsLee}); furthermore, for some values of $n$, $n\to\infty$, there exist simplicial $2$-spheres with $n$ vertices  whose transversal numbers are equal to $n/2$; see \cite[Section 4]{BriggsDobbinsLee}. This motivates studying asymptotics of transversal ratios of simplicial polytopes and simplicial spheres --- a problem that was raised, for instance, in \cite{Alonetall02,BriggsDobbinsLee}. Specifically, for $d\geq 2$, we let
		\begin{eqnarray*}\tau^P_d &:=&\limsup_{n\to\infty}\max\{\tau(\Delta): \Delta=\partial Q, \;\text{where $Q$ is a simplicial $d$-polytope with $n$ vertices}\},\\
			\tau^S_d&:=&\limsup_{n\to \infty}\max\{\tau(\Delta): \Delta\; \text{is a simplicial $(d-1)$-sphere with $n$ vertices}\}.
		\end{eqnarray*}
	
	Very little is known about these two sequences at present. Clearly, $\tau^P_2=\tau^S_2=1/2$ and $0<\tau^P_d\leq \tau^S_d\leq 1$ for all $d\geq 3$, and, as was mentioned above, $\tau^P_3=\tau^S_3=1/2$. Using cyclic polytopes, Briggs et al. (see \cite[Prop.~3.6]{BriggsDobbinsLee}) showed that for all $k\geq 2$,  $\tau^P_{2k}\geq 1/2$; additionally, with computer help, they proved that $\tau^S_4\geq 11/21$ (see \cite[Theorem 1.2]{BriggsDobbinsLee}). On the other hand, for $d=2k+1\geq 5$, they were only able to prove the bound $\tau^P_{2k+1}\geq 1/(k+1)$. (In this regard, it should be pointed out that all odd-dimensional cyclic polytopes have transversal number {\em two} independently of their number of vertices!) Finally, using certain families of (highly neighborly) $2k$-spheres and  centrally symmetric $2k$-spheres, the authors showed in \cite{NZ-transversal} that $\tau^S_{2k+1}\geq 2/5$. This completes the list of bounds that have been known so far.

The main contribution of this paper are the following new lower bounds:
	\begin{enumerate}
	\item $\tau^P_{2k+1}\geq 2/5$ for all $k\geq 2$; see Theorem \ref{thm:poly-2/5}.
	\item $\tau^S_{4}\geq 4/7$, $\tau^S_{5}\geq 1/2$, and $\tau^S_{6}\geq 6/11$; see Theorems \ref{thm:3-sphere-5/8}, \ref{thm:5-sphere-6/11}, and \ref{thm:1/2-in-dim4}.
	\end{enumerate}

Most of our proofs either use or are inspired by the properties of the cyclic polytopes. To construct families of pure complexes with relatively large transversal numbers (see Section 3), we use a variation of Gale's evenness condition \cite[Example 0.6]{Ziegler}. To prove that $\tau^P_{2k+1}\geq 2/5$ for all $k\geq 2$, we utilize Shemer's sewing technique \cite{Shemer}. 
Using this technique, for all $d\geq 4$, we introduce a new family of simplicial $d$-polytopes that could be considered as siblings of cyclic polytopes (see Section 4). The idea behind constructing simplicial (in fact, PL) spheres of dimension $3\leq d-1\leq 5$ with $n$ vertices that have higher transversal ratios than previously known resembles that of \cite[Section 5]{BriggsDobbinsLee}: specifically, we start with the boundary complex of the cyclic $d$-polytope with $n$ vertices (if $d$ is even) or with the boundary complex of the sibling of the cyclic $d$-polytope  introduced in Section 4 (if $d$ is odd), and then apply a sequence of strategically chosen bistellar flips (or local retriangulations) to increase the transversal ratio; see Section 5 for details. 

The structure of the paper is as follows. In Section 2, we discuss basics of hypergraphs and simplicial complexes, with a particular emphasis on simplicial polytopes and spheres; we also review there some results and definitions pertaining to transversal numbers.  Section 3 is devoted to transversal numbers of pure simplicial complexes. In Section 4, we construct siblings of cyclic polytopes and use them to prove that  $\tau^P_{2k+1}\geq 2/5$ for all $k\geq 2$. Finally, Section 5 presents constructions of PL spheres of dimensions $3$, $4$, and $5$ with higher transversal numbers than previously known: Section 5.1 discusses a possible general approach, while Sections 5.2 and 5.3 provide specific details in dimensions $3$ and $5$, and dimension $4$, respectively. (We note that Sections 4 and 5 could be read independently of Section 3.) We close in Section 6 with a few open problems.

We hope that the proof techniques introduced in the paper and, especially, the siblings of the cyclic polytopes will be of interest in their own right.

\section{Preliminaries}
\subsection{Hypergraphs and simplicial complexes}
We start with basic definitions and results pertaining to hypergraphs and simplicial complexes.
A \emph{hypergraph} $H=(V,E)$ consists of a (finite) set $V$, called the {\em vertex set} of $H$, and a collection $E$ of subsets of $V$, called the {\em edge set} of $H$. We say that $H$ is {\em $r$-uniform} if each edge of $H$ has size $r$. In particular, graphs are $2$-uniform hypergraphs. We usually assume that every vertex belongs to some edge. 

Similarly, a {\em simplicial complex} $\Delta$ with vertex set $V=V(\Delta)$ is a collection of subsets of $V$ that is closed under inclusion, that is, if $F\in\Delta$ and $G\subset F$, then $G\in\Delta$. The elements of $\Delta$ are called {\em faces}. We usually assume that every $v\in V$ forms a face which for brevity we denote by $v$ instead of $\{v\}$. The {\em dimension} of a face $F\in\Delta$ is $|F|-1$, and the {\em dimension} of $\Delta$ is the maximum dimension of its faces. The number of $i$-dimensional faces of $\Delta$ is denoted by $f_i(\Delta)$. 
 
The maximal under inclusion faces of a simplicial complex are called {\em facets}. A simplicial complex $\Delta$ is {\em pure} if all facets of $\Delta$ have the same dimension; in this case, the faces of dimension $\dim\Delta-1$ are called {\em ridges}.  Two important examples of pure simplicial complexes with vertex set $V$ are the {\em simplex}, $\overline{V}$, consisting of all subsets of $V$, and the {\em boundary complex} of $\overline{V}$, $\partial\overline{V}$, consisting of all subsets of $V$ but $V$ itself. When $V = \{v\}$ is a singleton, we write $\overline{v}$ instead of $\overline{\{v\}}$.

We now list several operations that allow us to construct new simplicial complexes from the old ones. If $\Lambda\subset\Delta$ are pure simplicial complexes of the same dimension, then we write $\Delta\backslash\Lambda$ to denote the pure simplicial complex generated by the facets of $\Delta$ that are not facets of $\Lambda$. If $\Delta$ and $\Gamma$ are two simplicial complexes on disjoint vertex sets then their {\em join}, denoted $\Delta*\Gamma$, is
\[\Delta*\Gamma=\{F\cup G : F\in\Delta, \ G\in\Gamma\}.\] The join of $\Delta$ with a $0$-dimensional simplex $\overline{v}$ is called the {\em cone} over $\Delta$, and is denoted $v*\Delta$. For a simplicial complex $\Delta$ and a face $F\in\Delta$, the {\em star} and the {\em link} of $F$ in $\Delta$ are the following subcomplexes of $\Delta$:
\[\st(F,\Delta)=\{G\in\Delta : F\cup G\in\Delta\} \quad \mbox{and} \quad \lk(F,\Delta)=\{G\in\st(F,\Delta) : F\cap G=\emptyset\}.\]
(If $\Delta$ is fixed or understood from context, we sometimes write $\st(F)$ and $\lk(F)$ instead of $\st(F,\Delta)$ and $\lk(F,\Delta)$.)
Thus, if $v$ is a vertex of $\Delta$, then $\st(v,\Delta)=v*\lk(v,\Delta)$. Finally, for a simplicial complex
$\Delta$ on $V$ and a subset $W$ of $V$, we denote by $\Delta[W] = \{F \in \Delta : F \subseteq W\}$ the {\em restriction} of $\Delta$ to $W$. A subcomplex $\Gamma$ of $\Delta$ is {\em induced} if it is of the form $\Delta[W]$ for some $W\subseteq V$.

\subsection{Simplicial polytopes}
We now discuss some basics of polytopes. An excellent reference to this material is \cite{Ziegler}.

 A {\em polytope} $P$ is the convex hull of finitely many points in a Euclidean space. The {\em dimension of $P$} is defined as the dimension of the affine hull of $P$. For brevity, we refer to a $d$-dimensional polytope as a {\em $d$-polytope}. An example of a $d$-polytope is a {\em (geometric) simplex} defined as the convex hull of $d+1$ affinely independent points in $\R^d$.

Let $P\subset \R^d$ be a $d$-polytope. A {\em supporting hyperplane} $L$ of $P$ is any hyperplane in $\R^d$ such that all points of $P$ lie on the same side of $L$. A {\em (proper) face} of $P$ is the intersection of $P$ with a supporting hyperplane. (This includes the empty face.) A face of a polytope is by itself a polytope. A face $F$ of $P$ is called an {\em $i$-face} if $\dim F=i$; $0$-faces are called {\em vertices} and $(d-1)$-faces are called {\em facets}. A polytope is {\em simplicial} if all of its facets are simplices. The {\em boundary complex} of a simplicial $d$-polytope $P$, $\partial P$, consists of the vertex sets of all proper faces of $P$; this is a simplicial complex of dimension $d-1$. For instance, if $\sigma$ is a (geometric) $d$-simplex with vertex set $V$, then $\partial \sigma=\partial\overline{V}$.

A family of simplicial polytopes that plays a crucial role in this paper is that of {\em cyclic polytopes}. Let $d\geq 2$ and let $M: \R\to \R^d, t\mapsto (t, t^2, \dots, t^d)$ be the ($d$-th) \emph{moment curve}. Given any $n\geq d+1$ distinct numbers $t_1 < t_2 < \dots < t_n$ in $\R$, we define the \emph{cyclic polytope}, $C(n, d)$, as the convex hull of the points $M(t_1), \dots, M(t_n)$. The polytope $C(n,d)$ has several remarkable properties (see \cite[Example 0.6]{Ziegler}), including that it is a simplicial
$d$-polytope with $n$ vertices whose combinatorial type is {\em independent} of the choice of $t_1, t_2, \dots, t_n$. For this reason, when talking about $\partial C(n,d)$, we label the vertices by elements of $[n]=[1,n]:=\{1,2,\dots,n\}$, with $i$ serving as a label for $M(t_i)$. Specifically, the set of facets of $C(n, d)$ is completely characterized by the following result, known as the Gale evenness condition:
\begin{theorem}\label{thm: Gale}
	A $d$-subset $T$ of $[n]$ forms a facet of $\partial C(n, d)$ if and only if any two elements of $[n]\backslash T$ are separated by an even number of elements from $T$. In particular, if $d=2k$, then every $d$-set of the form $\{i_1<i_1+1<i_2<i_2+1<\cdots<i_k<i_k+1\} \subseteq [n]$ is a facet of $\partial C(n, d)$.
\end{theorem}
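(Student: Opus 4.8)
The plan is to pin down, for each $d$-subset $T$ of $[n]$, the unique affine hyperplane spanned by the corresponding $d$ points of the moment curve, and then to determine by a sign computation exactly when that hyperplane supports $C(n,d)$.

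First I would set up the hyperplane. Given $T=\{i_1<\cdots<i_d\}\subseteq[n]$, consider the monic polynomial $p_T(t)=\prod_{j=1}^d(t-t_{i_j})=t^d+c_{d-1}t^{d-1}+\cdots+c_1t+c_0$ and the affine functional $\phi_T(x_1,\dots,x_d)=x_d+c_{d-1}x_{d-1}+\cdots+c_1x_1+c_0$ on $\R^d$; the basic identity to use throughout is $\phi_T(M(t))=p_T(t)$. Since the $x_d$-coefficient of $\phi_T$ is $1$, the zero set $H_T=\{\phi_T=0\}$ is a genuine hyperplane, and $M(t_i)\in H_T$ iff $i\in T$; a Vandermonde determinant shows that any $d$ distinct points of the moment curve are affinely independent, so these $d$ points span $H_T$, and $H_T$ is the only hyperplane through them. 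As $C(n,d)$ is a simplicial $d$-polytope with vertices $M(t_1),\dots,M(t_n)$, a $d$-set $T$ is the vertex set of a facet iff the vertices indexed by $T$ lie on a common supporting hyperplane, and that hyperplane is then forced to be $H_T$. Since $p_T(t_j)\neq0$ for $j\notin T$, the remaining vertices automatically avoid $H_T$, so $T$ is a facet iff $p_T(t_j)=\phi_T(M(t_j))$ has the same sign for every $j\in[n]\setminus T$.

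Next comes the sign computation. For $j\notin T$ we have $p_T(t_j)=\prod_{i\in T}(t_j-t_i)$, and because $t_1<\cdots<t_n$ the factor $t_j-t_i$ is positive for $i<j$ and negative for $i>j$; hence $\mathrm{sgn}\,p_T(t_j)=(-1)^{a(j)}$ with $a(j)=|\{i\in T:i>j\}|$. These signs agree for all $j\in[n]\setminus T$ iff $a(j)\equiv a(j')\pmod 2$ for all such $j,j'$; and for $j<j'$ in $[n]\setminus T$, since $j'\notin T$ one gets $a(j)-a(j')=|\{i\in T:j<i<j'\}|$. So $T$ is a facet iff the number of elements of $T$ strictly between any two elements of $[n]\setminus T$ is even, which is exactly Gale's evenness condition. (Equivalently, one may test only consecutive elements of $[n]\setminus T$ and invoke additivity.) For the ``in particular'' statement I would write $T=\{i_1,i_1+1,\dots,i_k,i_k+1\}$ as a disjoint union of $k$ blocks $\{i_\ell,i_\ell+1\}$ and observe that, for $j<j'$ in $[n]\setminus T$, each block contributes $0$ or $2$ (never $1$) to $|\{i\in T:j<i<j'\}|$, because the integer inequalities together with $j,j'\notin T$ force $i_\ell\in(j,j')\iff i_\ell+1\in(j,j')$; hence the count is even and $T$ is a facet.

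I do not expect a real obstacle here: this is classical, and the only substantive inputs are the identity $\phi_T(M(t))=p_T(t)$ and the affine independence of moment-curve points. The one place needing a little care is the bookkeeping that turns ``all of the signs $\mathrm{sgn}\,p_T(t_j)$ coincide'' into the stated between-elements parity statement, together with the boundary cases where elements of $T$ are adjacent to elements of $[n]\setminus T$ or sit near $1$ or $n$.
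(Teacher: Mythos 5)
The paper does not prove Theorem~\ref{thm: Gale}; it cites it as a known result (Gale's evenness condition) and refers the reader to \cite[Example 0.6]{Ziegler}. Your proof is correct and is in fact the standard argument, essentially the one appearing in Ziegler's book: associate to each $d$-set $T$ the monic polynomial $p_T$ vanishing at the corresponding parameters, observe that the affine functional $\phi_T$ satisfying $\phi_T(M(t))=p_T(t)$ cuts out the unique hyperplane through the $d$ points, and then reduce the supporting-hyperplane condition to a sign count. Your bookkeeping is sound: $\mathrm{sgn}\,p_T(t_j)=(-1)^{a(j)}$ with $a(j)=|\{i\in T:i>j\}|$, and for $j<j'$ in $[n]\setminus T$ the difference $a(j)-a(j')$ equals $|\{i\in T: j<i<j'\}|$ precisely because $j'\notin T$, which gives the evenness criterion; and your verification of the ``in particular'' clause correctly uses $j,j'\notin T$ to show each block $\{i_\ell,i_\ell+1\}$ contributes $0$ or $2$ to the count. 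No gaps.
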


One immediate corollary is that $\partial C(n, d)$ is {\em $\lfloor d/2\rfloor$-neighborly}, that is, every $\lfloor d/2\rfloor$ vertices of $[n]$ form the vertex set of a face of $\partial C(n, d)$.

\subsection{Simplicial spheres}
Via the notion of a {\em geometric realization}, one associates with a simplicial complex $\Delta$ a topological space, denoted $\|\Delta\|$: this space is built out of geometric simplices in a way that every two simplices intersect along a common (possibly empty) face and the collection of vertex sets of faces of $\|\Delta\|$ is $\Delta$. We often say that $\Delta$ has certain
geometric or topological properties if $\|\Delta\|$ does. For instance, we say that $\Delta$ is a {\em simplicial $(d-1)$-sphere} if $\|\Delta\|$ is homeomorphic to a $(d-1)$-dimensional sphere, and that $\Delta$ is a {\em simplicial $d$-ball} if $\|\Delta\|$  is homeomorphic to a $d$-dimensional ball.   As an example, $\overline{V}$ is a simplicial ball and $\partial\overline{V}$ is a simplicial sphere.

Assume $\Delta$ is a simplicial ball. Then every ridge of $\Delta$ is contained in at most two facets. The boundary complex of $\Delta$, $\partial\Delta$, is the pure $(d-1)$-dimensional complex generated by those ridges that are contained in a unique facet.
The faces of $\partial \Delta$ are called {\em boundary faces} of $\Delta$; all other faces of $\Delta$ are called {\em interior faces}.  For instance, what we previously denoted by $\partial\overline{V}$ is indeed the boundary complex of $\overline{V}$; here $V$ is the only interior face of $\overline{V}$. As another example, if $\Delta$ and $\Gamma$ are simplicial balls with disjoint vertex sets, then $\Delta*\Gamma$ is also a simplicial ball and $\partial(\Delta*\Gamma)=(\partial\Delta*\Gamma)\cup(\Delta*\partial\Gamma)$.

The boundary complex of any simplicial  polytope is a simplicial sphere. In light of this, a simplicial $(d-1)$-sphere is called {\em polytopal} if it can be realized as the boundary complex of a simplicial $d$-polytope. It follows from Steinitz' theorem that all simplicial $2$-spheres are polytopal; however, for $d\geq 4$, most of simplicial $(d-1)$-spheres are not polytopal; see \cite{GoodmanPollack,Kal,NeSanWil,PfeiZieg}.

An important subclass of simplicial spheres and balls is the class of PL spheres and PL balls. We say that $\Delta$ is a {\em PL $d$-ball} if $\Delta$ is PL homeomorphic to a $d$-simplex, while $\Delta$ is a {\em PL $(d-1)$-sphere} if it is PL homeomorphic to the boundary complex of a $d$-simplex. For example, the boundary complex of a PL $d$-ball is a PL $(d-1)$-sphere, and so is the boundary complex of a simplicial $d$-polytope. 
 If $P$ is a simplicial polytope and $\Delta$ is a PL ball such that $\partial P=\partial\Delta$, then $\Delta$ is called a {\em triangulation} of $P$.

Let $\Delta$ be a PL $(d-1)$-sphere. If $\Delta$ contains an induced subcomplex $\overline{A}*\partial \overline{B}$, where $A$ and $B$ are disjoint nonempty subsets of $V(\Delta)$ with $|A|+|B|=d+1$, then we can perform a {\em bistellar flip} on $\Delta$ by replacing $\overline{A}*\partial \overline{B}$ with $\partial \overline{A}*\overline{B}$. The resulting complex is again a PL $(d-1)$-sphere. The following result due to Pachner \cite{Pachner} allows to easily search through the space of PL spheres.

\begin{theorem} {\rm\cite{Pachner}}
	A simplicial complex $\Delta$ is a PL $(d-1)$-sphere if and only if $\Delta$  can be obtained from the boundary complex of a $d$-simplex by a finite sequence of bistellar flips.
\end{theorem}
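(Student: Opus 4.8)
The plan is to prove the two implications separately; the reverse implication is essentially free, while the forward implication is where all the work lies.

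\textbf{The easy direction.} Suppose $\Delta$ is obtained from $\partial\overline{V}$ with $|V|=d+1$ by a finite sequence of bistellar flips. I would argue by induction on the number of flips. The base case $\partial\overline{V}$ is, by definition, a PL $(d-1)$-sphere, and for the inductive step I need the assertion already recorded above that a bistellar flip of a PL $(d-1)$-sphere is again a PL $(d-1)$-sphere. The one point to verify there is that replacing an induced $\overline{A}*\partial\overline{B}$ by $\partial\overline{A}*\overline{B}$ yields a PL homeomorphic complex: both $\overline{A}*\partial\overline{B}$ and $\partial\overline{A}*\overline{B}$ are PL $(d-1)$-balls with the common boundary $\partial\overline{A}*\partial\overline{B}$, and together they triangulate the boundary sphere of the $d$-simplex on $A\cup B$; sliding one ``half'' of that boundary over the simplex to the other ``half'' inside $\Delta$ is a PL homeomorphism.

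\textbf{The hard direction.} Now suppose $\Delta$ is a PL $(d-1)$-sphere, and we must connect it to $\partial\overline{V}$ by bistellar flips. The first observation is that some bistellar flips are stellar operations in disguise: taking $B=\{w\}$ a singleton in the definition forces $|A|=d$, so $\overline{A}*\partial\overline{B}=\overline{A}$ is a facet, its $\overline A = \Delta[A]$ is automatically induced, and $\partial\overline{A}*\overline{B}=w*\partial\overline{A}$; thus the ``$0$-move'' of inserting a new vertex $w$ in the interior of a facet is precisely the stellar subdivision of that facet, and the reverse flip ($|A|=1$) is its inverse. With this in hand the strategy is:
\begin{enumerate}
\item[(i)] \emph{Reduce to stellar theory.} Since $\Delta$ is PL homeomorphic to $\partial\overline{V}$, the classical Alexander--Newman theorem on stellar equivalence connects the two by a finite zig-zag of stellar subdivisions and stellar welds (inverse subdivisions). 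Hence it suffices to realize each stellar subdivision and each stellar weld of a PL $(d-1)$-sphere as a finite composition of bistellar flips; and since welds are inverses of subdivisions and bistellar flips are invertible, it suffices to treat a stellar subdivision at a face $F$.
\item[(ii)] \emph{Realize an arbitrary stellar subdivision by bistellar flips.} Do downward induction on $i=\dim F$. The case $i=d-1$ is the $0$-move above. For $i<d-1$, use already-established moves to bring the star $\overline{F}*\lk(F)$ into a standard local configuration, perform a single bistellar flip effecting the subdivision of $F$ there, and undo the preparatory moves; at each stage one inserts extra preparatory $0$-moves as needed so that the subcomplex being flipped is genuinely \emph{induced}.
\end{enumerate}

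\textbf{Main obstacle.} The crux is step (ii): organizing the induction so that an arbitrary stellar subdivision is expressed through bistellar flips while never leaving the class of PL $(d-1)$-spheres and while every flipped subcomplex stays induced. (An equivalent route, essentially Pachner's, recasts this as the ``bounded'' statement — relating the two PL $d$-balls $v_1*\Delta$ and $v_2*\partial\overline{V}\cong\overline{V}$ that $\Delta$ and $\partial\overline{V}$ cone to — and reads bistellar flips on the boundary sphere off elementary moves on the bounding ball; the combinatorial delicacy is the same.) Everything else — the easy direction, the identification of $0$-moves with facet subdivisions, and the appeal to the stellar equivalence theorem — is routine.
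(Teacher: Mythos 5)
The paper offers no proof of this statement: it is Pachner's theorem, quoted verbatim from \cite{Pachner}, so there is nothing internal to compare your argument against. Judged on its own terms, your outline follows the standard stellar-theoretic route (Alexander--Newman equivalence plus a reduction of stellar moves to bistellar ones, essentially Lickorish's account of Pachner's theorem), and your easy direction and the identification of the $0$-move with stellar subdivision of a facet are fine.

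However, step (ii) contains a genuine gap rather than a routine verification. For a face $F$ with $\dim F = i < d-1$, the stellar subdivision replaces $\st(F)=\overline{F}*\lk(F)$ by $w*\partial\overline{F}*\lk(F)$, and $\lk(F)$ is an arbitrary PL $(d-2-i)$-sphere, not the boundary of a simplex; so $\st(F)$ is not of the form $\overline{A}*\partial\overline{B}$ and no amount of ``preparatory $0$-moves'' turns the subdivision into \emph{a single} bistellar flip, which is what your sketch asserts. The known repair is substantially more involved: one first invokes Alexander's lemma to reduce arbitrary starrings to starrings at edges, and then realizes an edge starring by bistellar flips via an induction on dimension, applying the inductive hypothesis (the full theorem one dimension down) to the link of the edge to move it into standard position flip by flip, while checking at every stage that the flipped subcomplexes are induced and that no intermediate ``complex'' acquires identified simplices. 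Since that inductive mechanism through the links is exactly what you have labelled the ``main obstacle'' and left unargued, the proposal is a correct plan but not a proof.
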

It is worth noting that for $d\leq 4$, every simplicial $(d-1)$-sphere is PL; however, for $d\geq 6$ there exist simplicial $(d-1)$-spheres that are not PL. (To the best of our knowledge, the question of whether there exist simplicial $4$-spheres that are not PL remains a major open problem.) The reader is referred to \cite{Hudson,Lickorish} for additional background on PL topology.

\subsection{Transversal numbers and asymptotic notations}
Let $H=(V,E)$ be a hypergraph. A {\em transversal} of $H$ is a subset $\T$ of $V$ that intersects with all edges of $H$. The {\em transversal number} of $H$, $T(H)$, is the minimum size of a transversal of $H$, and the {\em transversal ratio} of $H$ is $\tau(H):=T(H)/|V|$. 

Note that a pure $(d-1)$-dimensional simplicial complex $\Delta$ with vertex set $V$ is uniquely determined by the $d$-uniform hypergraph $H(\Delta)=(V, \F)$ where $\F$ is the set of facets of $\Delta$. We define the {\em transversal number} and the {\em transversal ratio} of $\Delta$, $T(\Delta)$ and $\tau(\Delta)$, respectively, as the transversal number and the transversal ratio of the hypergraph $H(\Delta)$.  

Transversal numbers of hypergraphs (or of associated pure complexes) are closely related to other well-studied invariants of hypergraphs. A {\em (weak) independent set} of a hypergraph $H=(V,E)$ is a subset $I$ of $V$ that contains no edge of $H$. The {\em independence number} of $H$, $\alpha(H)$, is the maximum size of an independent set.  A {\em weak coloring} $\kappa$ of $H$ is an assignment of colors to the vertices of $H$ so that no edge is monochromatic. Equivalently, the pre-image of any color is an independent set. We say that $\kappa$ is a {\em strong coloring} 
if the restriction of $\kappa$ to any edge of $H$ is an injective function. 
The {\em weak chromatic number} of $H$, $\chi_w(H)$,  is  the minimum number of colors in a weak coloring of $H$; the {\em strong chromatic number} of $H$, $\chi_s(H)$, is defined analogously.  

The following lemma is an immediate consequence of these definitions.

\begin{lemma}\label{lm: transversal property}
	Let $r\geq 2$. Let $H$ be an $r$-uniform hypergraph with $n$ vertices. Then 
	\begin{enumerate}
		\item $T(H) = n-\alpha(H)$.
		\item $T(H) \leq  n-n/\chi_w(H)$ and  $T(H)\leq  \frac{(\chi_s(H)-(r-1))n}{\chi_s(H)}$.
	\end{enumerate}
\end{lemma}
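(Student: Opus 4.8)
**The plan is to prove Lemma~\ref{lm: transversal property} directly from the definitions recalled just above the statement.** The lemma has three assertions, and each is short; the only real content is translating between the language of transversals, independent sets, and colorings. I would organize the proof as three short paragraphs, one per inequality, with the first (the identity $T(H)=n-\alpha(H)$) doing most of the conceptual work and the coloring bounds following by applying it to cleverly chosen subsets.

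\textbf{Step 1: the identity $T(H)=n-\alpha(H)$.} The key observation is that a subset $\mathcal T\subseteq V$ is a transversal of $H$ if and only if its complement $V\setminus\mathcal T$ is an independent set. Indeed, $\mathcal T$ fails to be a transversal exactly when some edge $e\in E$ is disjoint from $\mathcal T$, i.e. $e\subseteq V\setminus\mathcal T$; and $V\setminus\mathcal T$ fails to be independent exactly when it contains some edge. These are the same condition, so the complementation map is a bijection between transversals and independent sets, and it reverses cardinality: $|\mathcal T|=n-|V\setminus\mathcal T|$. Taking the minimum over transversals on the left corresponds to taking the maximum over independent sets on the right, giving $T(H)=n-\alpha(H)$.

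\textbf{Step 2: the weak-coloring bound.} Given a weak coloring $\kappa$ of $H$ using $\chi_w(H)$ colors, each color class is an independent set, and the color classes partition $V$; hence some color class has size at least $n/\chi_w(H)$, so $\alpha(H)\geq n/\chi_w(H)$. Combining with part (1), $T(H)=n-\alpha(H)\leq n-n/\chi_w(H)$.

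\textbf{Step 3: the strong-coloring bound.} Given a strong coloring $\kappa$ of $H$ using $\chi_s(H)$ colors, consider the union $I$ of the $\chi_s(H)-(r-1)$ smallest color classes (equivalently, discard the $r-1$ largest). Since $\kappa$ is injective on every edge, an edge of size $r$ meets at most $r$ color classes; as $I$ omits only $r-1$ classes, every edge must contain a vertex in one of the $r-1$ omitted classes—wait, that is not quite the right count, so here I would instead argue that an edge cannot be contained in $I$: if it were, its $r$ vertices would lie in $r$ distinct color classes all among the chosen $\chi_s(H)-(r-1)$ classes, which is fine for $r$ small, so the correct argument is the averaging one: the chosen classes, being the smallest $\chi_s(H)-(r-1)$ of $\chi_s(H)$ classes partitioning $V$, have total size at least $\frac{\chi_s(H)-(r-1)}{\chi_s(H)}\,n$, and $I$ is independent because any edge meets $r$ distinct classes and hence is not contained in a union missing $r-1$ of them only when... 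Here the cleanest route is: pick the $r-1$ \emph{largest} color classes to \emph{keep} as the transversal candidate's complement? Let me restate cleanly: let $C_1,\dots,C_{r-1}$ be the $r-1$ largest color classes and set $\mathcal T=C_1\cup\cdots\cup C_{r-1}$. Every edge $e$, having $r$ vertices in $r$ distinct color classes, must intersect at least one of the $r-1$ largest—no, it could miss all $r-1$ of them if its $r$ colors are among the other classes. So that is false too, and the honest argument is the independent-set one combined with a size estimate: take $I=V\setminus\mathcal T$ where $\mathcal T$ is the union of the $r-1$ largest classes; then $I$ is a union of at most $\chi_s(H)-(r-1)$ classes, and $I$ \emph{is} independent precisely because an edge uses $r$ distinct colors but $I$ supplies only $\chi_s(H)-(r-1)$ of them—again not automatic. \textbf{The main obstacle is pinning down exactly why this union is independent}; the resolution (used in the literature) is that if an edge lay inside $I$ its $r$ vertices would receive $r$ distinct colors from among those of $I$, which is allowed, so independence is NOT the mechanism—rather one shows $\mathcal T$ (the $r-1$ largest classes) is a transversal by the pigeonhole fact that $r$ distinct colors cannot all avoid the $r-1$ most... no. The actual correct statement: $\mathcal T$ := union of ANY $r-1$ color classes is a transversal? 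An edge has $r$ vertices in $r$ distinct classes; it meets $\mathcal T$ iff one of those $r$ classes is among the chosen $r-1$—not guaranteed. \emph{Therefore} the right move is: $\mathcal T$ := complement of the largest color class is a transversal is also false. I will, in the final writeup, simply cite that $\mathcal T=V\setminus C$ where $C$ is a color class of maximum size has complement of size $\geq n/\chi_s(H)$ and is a transversal because no edge is contained in a single color class (injectivity forces $r$ colors $\geq 2$), iterating to remove $r-1$ classes and using $\alpha(H)\ge\frac{(\chi_s(H)-(r-1))n}{\chi_s(H)}$ via the averaging step, then invoke part (1). I expect the bookkeeping in Step 3—identifying the precise reason the union of the $r-1$ smallest classes is \emph{independent}—to be the one place requiring care, and I would resolve it by noting that strong-colorability implies every $r-1$ color classes together form an independent set (any edge needs $r$ distinct colors, hence cannot fit in $r-1$ classes), so dually the complement of any $r-1$ classes is a transversal, and choosing the $r-1$ \emph{largest} classes to delete maximizes the resulting independent set, yielding the stated bound after one application of part (1).
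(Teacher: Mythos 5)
Your proposal is correct in substance; the paper itself offers no proof, calling the lemma an immediate consequence of the definitions, and your Steps 1 and 2 are exactly the intended complementation and averaging arguments. The only issue is that your Step 3 leaves several abandoned false starts in the text before arriving at the right mechanism in the last sentence. The clean version, which you do eventually state, is this: since a strong coloring is injective on every edge, each edge of size $r$ receives $r$ distinct colors, so no edge can be contained in a union of only $r-1$ color classes; hence the union $I$ of \emph{any} $r-1$ color classes is an independent set. Taking the $r-1$ largest classes gives $|I|\geq \frac{(r-1)n}{\chi_s(H)}$ by averaging, so $\alpha(H)\geq \frac{(r-1)n}{\chi_s(H)}$, and part (1) yields $T(H)=n-\alpha(H)\leq n-\frac{(r-1)n}{\chi_s(H)}=\frac{(\chi_s(H)-(r-1))n}{\chi_s(H)}$. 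Note that independence of $I$ \emph{is} the mechanism (contrary to one of your interim doubts): the pigeonhole is on colors, not on vertices, and it works precisely because $r>r-1$. The claims you correctly rejected along the way (that the union of $r-1$ classes is a transversal, or that the union of the $\chi_s(H)-(r-1)$ smallest classes is independent) should simply be deleted from the final writeup.
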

The classical theorem of Tur\'an asserts that a graph $G$ with $n$ vertices and the average vertex degree $d_G$ has an independent set of size at least $n/(d_G+1)$; see \cite[Chapter 41]{AignerZiegler}. This theorem together with part 1 of the above lemma then implies

\begin{corollary}\label{lm: Turan}
	Let $d\geq 4$  and let $\Delta$ be a pure $(d-1)$-dimensional complex with $f_0(\Delta)=n$ and $f_1(\Delta)=\ell$. Then $T(\Delta)\leq n-\frac{n}{2\ell/n+1}$.
\end{corollary}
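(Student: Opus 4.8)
The plan is to combine part~1 of Lemma~\ref{lm: transversal property} with the classical lower bound on the independence number of a graph in terms of its average degree. By part~1 of that lemma, $T(\Delta)=T(H(\Delta))=n-\alpha(H(\Delta))$, so it suffices to exhibit a weak independent set of $H(\Delta)$ of size at least $n/(2\ell/n+1)$; equivalently, a set $I\subseteq V(\Delta)$ of that size that contains no facet of $\Delta$.

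The key observation is that since $d\geq 4\geq 2$, every facet $F$ of $\Delta$ has $|F|=d\geq 2$, and because $\Delta$ is closed under taking subsets, $F$ contains at least one $1$-dimensional face of $\Delta$. Consequently, if $G$ denotes the $1$-skeleton of $\Delta$ — the graph on $V(\Delta)$ whose edge set is the collection of $f_1(\Delta)=\ell$ faces of dimension $1$ — then any independent set of the graph $G$ contains no facet of $\Delta$, hence is a weak independent set of $H(\Delta)$. Therefore $\alpha(H(\Delta))\geq\alpha(G)$.

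Finally, I would invoke Tur\'an's theorem in its independence-number form: a graph on $n$ vertices with $\ell$ edges, i.e.\ with average degree $\bar d=2\ell/n$, has an independent set of size at least $n/(\bar d+1)=n/(2\ell/n+1)$. (This can also be deduced from the Caro--Wei bound $\alpha(G)\geq\sum_{v}1/(\deg v+1)$ together with convexity of $x\mapsto 1/(x+1)$.) Chaining the three steps yields
\[
T(\Delta)=n-\alpha(H(\Delta))\leq n-\alpha(G)\leq n-\frac{n}{2\ell/n+1},
\]
as claimed.

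The only point requiring any care — and the closest thing to an obstacle — is the passage from the $d$-uniform hypergraph $H(\Delta)$ to the ordinary graph $G$. This reduction is valid precisely because a facet of a simplicial complex, having at least two vertices when $d\geq 2$, automatically contains a $1$-face of the complex; thus ``no edge of $G$'' is a stronger restriction on a vertex set than ``no facet of $\Delta$,'' and the graph-theoretic Tur\'an bound can be imported wholesale.
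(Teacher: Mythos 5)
Your proof is correct and is exactly the argument the paper intends: the paper derives this corollary by combining part~1 of Lemma~\ref{lm: transversal property} with Tur\'an's theorem applied to the $1$-skeleton, which is precisely your chain $T(\Delta)=n-\alpha(H(\Delta))\leq n-\alpha(G)\leq n-\frac{n}{2\ell/n+1}$. Your explicit justification that a graph-independent set of the $1$-skeleton is a weak independent set of $H(\Delta)$ is the (routine) detail the paper leaves implicit.
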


Recall that the Four Color Theorem states that if $\Delta$ is a simplicial $2$-sphere, then $\chi_s(H(\Delta))$ is at most $4$. Together with part 2 of Lemma \ref{lm: transversal property}, this yields
\begin{corollary}
	The transversal number of a simplicial $2$-sphere $\Delta$ is at most $\frac{1}{2}f_0(\Delta)$.
\end{corollary}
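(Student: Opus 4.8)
The plan is to derive this immediately from the Four Color Theorem (quoted just above) together with the strong-chromatic-number bound in part~2 of Lemma~\ref{lm: transversal property}. First I would note that a simplicial $2$-sphere $\Delta$ is a pure $2$-dimensional complex, so the associated hypergraph $H(\Delta)$ is $3$-uniform, and its underlying graph (the $1$-skeleton of $\Delta$) is a planar graph. By the Four Color Theorem this graph admits a proper vertex coloring with at most $4$ colors. Since every two vertices spanning a $1$-face of $\Delta$ receive distinct colors, and every facet of $\Delta$ is a triangle --- hence consists of three pairwise adjacent vertices --- the restriction of such a coloring to each facet is injective. Therefore $\chi_s(H(\Delta))\leq 4$.

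It then remains to feed this into Lemma~\ref{lm: transversal property}. Taking $r=3$ and using $\chi_s(H(\Delta))\leq 4$ in the bound $T(H)\leq \frac{(\chi_s(H)-(r-1))n}{\chi_s(H)}$, together with the observation that $x\mapsto 1-\frac{r-1}{x}$ is increasing in $x>0$, we obtain
\[
T(\Delta)=T(H(\Delta))\leq \frac{(\chi_s(H(\Delta))-2)\,f_0(\Delta)}{\chi_s(H(\Delta))}\leq \frac{(4-2)\,f_0(\Delta)}{4}=\frac12 f_0(\Delta).
\]
There is no substantive obstacle in this argument: the only mildly delicate point is the passage from a proper $4$-coloring of the $1$-skeleton to a strong $4$-coloring of the hypergraph $H(\Delta)$, which is immediate once one recalls that each edge of a triangular facet is itself a $1$-face of $\Delta$, so pairwise-distinct colors on adjacent vertices force all three vertices of every facet to be colored distinctly.
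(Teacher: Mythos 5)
Your proof is correct and follows exactly the same route as the paper: invoke the Four Color Theorem to get $\chi_s(H(\Delta))\leq 4$, then apply the strong-chromatic-number bound from part~2 of Lemma~\ref{lm: transversal property} with $r=3$. Your additional remarks about why a proper $4$-coloring of the planar $1$-skeleton is a strong coloring of the facets, and the monotonicity of $x\mapsto 1-(r-1)/x$, are just filling in details the paper leaves implicit.
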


In this paper, we discuss the {\em asymptotics} of transversal numbers and transversal ratios of various classes of pure simplicial complexes. Below we give a quick review of asymptotic notations. Let $f, g$ be two functions from the set of nonnegative integers to itself. We say that $g(n)=o(f(n))$ if $\lim_{n\to\infty} g(n)/f(n)=0$. Throughout, we use $f(n)\gg g(n)$ and $g(n)=o(f(n))$ interchangeably. We write $g(n)=O(f(n))$ if there is a positive constant $C$ such that $g(n)\leq Cf(n)$ for all (sufficiently large) $n$; similarly, we write $g(n)=\Omega(f(n))$ if there is a positive constant $c$ such that $g(n)\geq cf(n)$ for all $n$. Finally, we write $g(n)=\Theta(f(n))$ if $g(n)=O(f(n))$ and $g(n)=\Omega(f(n))$. Thus, $g(n)= n-\Omega(f(n))$ means that there is a constant $c>0$ such that $g(n)\leq n-cf(n)$ for all $n$,  while $g(n)= n-\Theta(f(n))$ means that there are positive constants $c$ and $C$ such that $n-Cf(n)\leq g(n)\leq n-cf(n)$ for all $n$.
	
\section{Transversal numbers of pure complexes}
In this section we establish an upper bound on the transversal numbers of pure simplicial complexes in terms of the number of vertices and the number of facets; see Theorem \ref{lm: pure complex transversal}. We then discuss constructions of pure complexes whose transversal numbers come close to this bound. Along the way we briefly touch on Eulerian complexes and pseudomanifolds.

\begin{theorem} \label{lm: pure complex transversal}
		Let $d\geq 2$ and let $\Delta$ be a pure $(d-1)$-dimensional complex with $n$ vertices and $m$ facets. Then for $n$ sufficiently large, $T(\Delta) \leq n+1-\frac{1}{e}nm^{-1/d}$.
	\end{theorem}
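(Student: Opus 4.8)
The plan is to use the standard probabilistic/greedy argument for covering codes, recast in the language of transversals. Since $T(\Delta) = n - \alpha(\Delta)$ by Lemma \ref{lm: transversal property}(1), it suffices to exhibit an \emph{independent} set (a subset of $V$ containing no facet) of size at least $\frac{1}{e} n m^{-1/d} - 1$. Equivalently, pick a random subset $I \subseteq V$ by including each vertex independently with probability $p$, to be optimized. The expected number of vertices in $I$ is $pn$, and the expected number of facets fully contained in $I$ is $m p^d$ (each of the $m$ facets has exactly $d$ vertices). Delete one vertex from each facet contained in $I$; the resulting set is independent and has expected size at least $pn - m p^d$. Hence $\alpha(\Delta) \geq \max_p (pn - mp^d)$.

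The next step is to optimize over $p$. Differentiating $pn - mp^d$ gives the optimum $p = (n/(dm))^{1/(d-1)}$, but a cleaner choice that lands exactly on the stated bound is $p = n^{-1/d} m^{-1/d} \cdot (\text{something})$; more precisely, one checks that taking $p$ so that $mp^d = \frac{1}{d} pn$ yields $pn - mp^d = (1 - 1/d)pn$, and then a direct substitution produces a lower bound of the form $c_d\, n\, m^{-1/d}$. To hit the constant $1/e$, I would instead argue slightly differently: the key elementary inequality is that $\max_{p\in[0,1]}(pn - mp^d) \geq \frac{1}{e} n m^{-1/d}$ whenever the optimizing $p$ lies in $[0,1]$, which holds for $n$ sufficiently large relative to the implicit dependence on $d$ (this is where the "$n$ sufficiently large" hypothesis enters — we need $(n/(dm))^{1/(d-1)} \le 1$, i.e. essentially $m \ge n/d$, but the theorem as stated should be read with the convention that for small $m$ the bound is trivially true or the optimizing $p=1$ case is handled separately). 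Carrying out the calculus: with $p=(n/(dm))^{1/(d-1)}$ one gets $pn - mp^d = pn(1 - 1/d) = (1-1/d)\,n^{d/(d-1)}(dm)^{-1/(d-1)}$; comparing this with $\frac1e n m^{-1/d}$ reduces to an inequality between powers of $n$ and $m$ which, combined with the regime $m = \Theta(n^{\lfloor d/2\rfloor})$ of interest, gives room to spare.

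Rather than fuss over the exact constant via calculus, the slickest route to precisely $n+1-\frac1e nm^{-1/d}$ is the following greedy/iterative construction, which I expect the authors use. Order the vertices $v_1, \dots, v_n$. Process them one at a time, maintaining a partial independent set; but this is awkward. Cleaner: set $p = m^{-1/d}$ directly. Then $mp^d = 1$, and the expected size of $I$ minus facets is $pn - 1 = nm^{-1/d} - 1$. This already gives $T(\Delta) \le n + 1 - nm^{-1/d}$, which is stronger than claimed — so the $1/e$ must be covering a case where we cannot afford $p = m^{-1/d}$, namely when $m^{-1/d} > 1$, i.e. $m < 1$, which is vacuous; or the $1/e$ arises because one wants the bound uniformly and instead uses the alteration more carefully with $p$ chosen to also control, say, a second moment or a different counting. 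I would reconcile this by choosing $p = \frac{1}{e} m^{-1/d}$ — wait, that is worse. The resolution: the relevant obstruction is that $I$ must \emph{actually} be constructed to avoid facets, and the alteration step removes up to $mp^d$ vertices but we should also ensure the count; with $p = c\, m^{-1/d}$ the net is $(c - c^d)\,nm^{-1/d}$, maximized at $c = d^{-1/(d-1)}$ giving coefficient $d^{-1/(d-1)}(1 - 1/d) = d^{-1/(d-1)}\frac{d-1}{d}$, which tends to $1$ as $d\to\infty$ but for small $d$ is less than $1$ and is bounded below by $1/e$ for all $d \ge 2$ (indeed $d^{-1/(d-1)} \ge 1/e$ since $\frac{\ln d}{d-1} \le 1$). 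So the constant $1/e$ is the uniform-in-$d$ lower bound for $d^{-1/(d-1)}\frac{d-1}{d}$; one checks $d^{-1/(d-1)}\frac{d-1}{d} \ge \frac1e$ for all $d\ge 2$ by elementary calculus.

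So the proof outline is: (1) invoke $T(\Delta) = n - \alpha(\Delta)$; (2) random subset with inclusion probability $p = d^{-1/(d-1)} m^{-1/d}$, applicable once $n$ is large enough that $p \le 1$; (3) alteration: delete a vertex from each fully-contained facet, yielding an independent set of expected size $\ge pn - mp^d = d^{-1/(d-1)}\frac{d-1}{d}\, n\, m^{-1/d}$; (4) the elementary inequality $d^{-1/(d-1)}\frac{d-1}{d} \ge \frac1e$ for $d \ge 2$; (5) conclude $\alpha(\Delta) \ge \frac1e n m^{-1/d}$, hence $T(\Delta) \le n - \frac1e n m^{-1/d} \le n + 1 - \frac1e n m^{-1/d}$. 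The main obstacle — really a bookkeeping point rather than a genuine difficulty — is making the constant $1/e$ appear cleanly and correctly identifying why "$n$ sufficiently large" is needed (to keep $p\le 1$, equivalently to be in the regime $m \gtrsim n$ that the paper cares about anyway), and verifying the scalar inequality in step (4).
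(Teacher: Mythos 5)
Your approach is genuinely different from the paper's, and at its core it works — in fact it proves more than the stated theorem — but the final outline you settle on contains an error that you should fix.

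The paper's proof is a deterministic greedy argument: repeatedly remove the vertex covering the largest number of remaining facets, use the averaging bound $f_{d-1}(\Delta_{i+1})\leq f_{d-1}(\Delta_i)(1-\frac{d}{n-i})$, telescope to get $f_{d-1}(\Delta_{i+1})\leq m\binom{n-i-1}{d}/\binom{n}{d}$, and then apply $(a/b)^b\leq\binom{a}{b}\leq(ea/b)^b$; the constant $1/e$ is a byproduct of that binomial estimate. Your proof instead passes to $\alpha(\Delta)$ via Lemma \ref{lm: transversal property}(1) and uses the probabilistic alteration (deletion) method. Both are legitimate, and your route is arguably cleaner; the paper likely preferred the greedy form because it constructs the transversal explicitly and matches the greedy vertex-removal flavor used elsewhere in that section.

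However, your step (4) is false. The inequality $d^{-1/(d-1)}\cdot\frac{d-1}{d}\geq\frac{1}{e}$ fails at $d=2$: there the left-hand side is $2^{-1}\cdot\frac{1}{2}=\frac{1}{4}$, while $\frac{1}{e}\approx 0.368$. So the outline as written does not prove the theorem for $d=2$. (For $d\geq 3$ the inequality does hold, and it tends to $1$ as $d\to\infty$, so the failure is confined to $d=2$.) The irony is that you already had the correct and simpler fix in hand and then talked yourself out of it: choose $p=m^{-1/d}$. This is always $\leq 1$ since $m\geq 1$, gives $mp^d=1$ exactly, and yields an independent set of expected size $\geq pn-mp^d=nm^{-1/d}-1$. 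Hence $\alpha(\Delta)\geq nm^{-1/d}-1$ and $T(\Delta)\leq n+1-nm^{-1/d}$, which is strictly stronger than the theorem (since $1>1/e$), uniformly in $d\geq 2$, and requires no auxiliary scalar inequality and no ``$n$ sufficiently large'' hypothesis. The only reason the paper carries the $1/e$ is that its binomial-coefficient estimate costs a factor of $e$; the probabilistic argument simply does not incur that loss. So: drop the $c=d^{-1/(d-1)}$ optimization and the false inequality, use $p=m^{-1/d}$, and you have a correct proof of a stronger statement.
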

	\begin{proof}
		Let $V_0=\emptyset$ and let $\Delta_0=\Delta$. For each $0\leq i<n$, we inductively define $V_{i+1}:=V_i\cup v_{i+1}$ and $\Delta_{i+1}:=\Delta[V(\Delta)\backslash V_{i+1}]$ by choosing $v_{i+1}\in V(\Delta)\backslash V_i$
in a way that maximizes $f_{d-1}(\Delta_i)-f_{d-1}(\Delta_{i+1})$.
		Since $$\sum_{u\in V(\Delta_i)} f_{d-1}(\st(u, \Delta_i))= df_{d-1}(\Delta_i),$$ it follows that $f_{d-1}(\st(v_{i+1}, \Delta_i))\geq \frac{df_{d-1}(\Delta_i)}{n-|V_i|}$, and hence that $$f_{d-1}(\Delta_{i+1})= f_{d-1}(\Delta_i)-f_{d-1}(\st(v_{i+1}, \Delta_i))\leq f_{d-1}(\Delta_i)\cdot \left(1-\frac{d}{n-i}\right).$$
		We then conclude by induction on $i$ that for $i>d$ and $n>i+d$,
		\begin{equation} \label{eq:facets}
			\begin{split}
				f_{d-1}(\Delta_{i+1})&\leq m\frac{(n-d)(n-d-1)\dots (n-i-d)}{n(n-1)\dots(n-i)}\\
				& = m \frac{(n-i-1)(n-i-2)\dots(n-i-d)}{n(n-1)\dots(n-d+1)}=m \frac{\binom{n-i-1}{d}}{\binom{n}{d}}.
			\end{split}
		\end{equation}
		
		Since for all positive $a$ and $b$, $(a/b)^b\leq \binom{a}{b}\leq (ea/b)^b$, eq.~(\refeq{eq:facets}) can be rewritten as
		\[
			\begin{split}
				f_{d-1}(\Delta_{i+1})&\leq m \frac{(e(n-i-1)/d)^{d}}{(n/d)^{d}}\\
				& = me^d\left(\frac{n-i-1}{n}\right)^d.
			\end{split}
		\]
It remains to note that $V_{i+1}$ is a transversal of $\Delta$ if and only if $f_{d-1}(\Delta_{i+1})=0$. Consequently, $T(\Delta) \leq i+1$ where $i$ is the smallest integer such that
		$$\left(\frac{n-i-1}{n}\right)^d < \frac{ 1}{me^d}, \; \text{or, equivalently}, \, n-i-1 < \frac{1}{e}\cdot nm^{-1/d}.$$
	 The desired bound $T(\Delta) \leq n+1-\frac{1}{e}nm^{-1/d}$ follows. 
	\end{proof}
	
	We recall Klee's Upper Bound Theorem; see \cite{Klee-UBT}. A simplicial complex $\Delta$ of dimension $d-1$ is \emph{Eulerian} if $\tilde{\chi}(\lk(F, \Delta))=(-1)^{d-|F|-1}$ for all $F\in \Delta$ (including the empty face); here $\tilde{\chi}$ denotes the {\em reduced Euler characteristic}. For instance, all simplicial spheres are Eulerian.
	\begin{theorem}
		Let $d\geq 2$ and let $\Delta$ be a $(d-1)$-dimensional Eulerian complex with $n$ vertices. Then for $n$ sufficiently large, $f_i(\Delta)\leq f_i(C(n,d))$ for all $i$. In particular, $f_{d-1}(\Delta)  \leq 2n^{\lfloor d/2\rfloor}$.
	\end{theorem}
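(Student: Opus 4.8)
The plan is to prove the stronger statement $f_i(\Delta)\le f_i(C(n,d))$ for all $i$ (for $n$ large) by comparing $h$-vectors; the ``in particular'' clause then follows at once since $f_{d-1}(C(n,d))=\Theta(n^{\lfloor d/2\rfloor})$, as recalled in the introduction. Write $h_i=h_i(\Delta)$ for the entries of the $h$-vector of $\Delta$, defined by $\sum_{i=0}^{d}f_{i-1}(x-1)^{d-i}=\sum_{i=0}^{d}h_ix^{d-i}$ with $f_{-1}:=1$; thus $h_i=\sum_{j=0}^{i}(-1)^{i-j}\binom{d-j}{i-j}f_{j-1}$, and conversely $f_{i-1}=\sum_{j=0}^{i}\binom{d-j}{i-j}h_j$ is a nonnegative combination of $h_0,\dots,h_i$. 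Since $\Delta$ is Eulerian, the Dehn--Sommerville relations $h_i=h_{d-i}$ hold; substituting these into the formula for $f_{i-1}$ and folding the indices exceeding $\lfloor d/2\rfloor$ onto their mirror images shows that each $f_{i-1}(\Delta)$ is a nonnegative combination --- with coefficients depending only on $d$ and $i$ --- of $h_0(\Delta),\dots,h_{\lfloor d/2\rfloor}(\Delta)$. As $\partial C(n,d)$ is also Eulerian, the identical combination recovers $f_{i-1}(C(n,d))$ from $h_0(C(n,d)),\dots,h_{\lfloor d/2\rfloor}(C(n,d))$. Hence it suffices to prove, for $n$ large,
\[
h_i(\Delta)\ \le\ h_i(C(n,d))\ =\ \binom{n-d-1+i}{i}\qquad\text{for } 0\le i\le\lfloor d/2\rfloor,
\]
the displayed value being the standard formula for the $h$-numbers of a $\lfloor d/2\rfloor$-neighborly $d$-polytope in this range.

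For $0\le j\le\lfloor d/2\rfloor$ put $g_j:=\binom{n}{j}-f_{j-1}(\Delta)$, the number of $j$-element non-faces of $\Delta$; thus $g_j\ge 0$, and $g_0=g_1=0$ because $\Delta$ contains the empty face and all $n$ vertices. Since $C(n,d)$ is $\lfloor d/2\rfloor$-neighborly, $f_{j-1}(C(n,d))=\binom{n}{j}$ for $j\le\lfloor d/2\rfloor$, so substituting $f_{j-1}(\Delta)=\binom{n}{j}-g_j$ into the formula for $h_i$ yields, for $i\le\lfloor d/2\rfloor$,
\[
h_i(C(n,d))-h_i(\Delta)\ =\ \sum_{j=2}^{i}(-1)^{i-j}\binom{d-j}{i-j}\, g_j\,,
\]
a quantity I call $S$; it remains to show $S\ge 0$ for $n$ large. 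The one combinatorial input is a supersaturation bound for non-faces: if $N$ is a $j$-element non-face then $N\cup\{v\}$ is a $(j+1)$-element non-face for every $v\notin N$, and each $(j+1)$-element non-face arises this way at most $j+1$ times; hence $(n-j)\, g_j\le(j+1)\, g_{j+1}$, i.e.\ $g_{j+1}\ge\frac{n-j}{j+1}\, g_j$, and in particular $g_j>0$ forces $g_{j+1}>0$.

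Now group the terms of $S$ into the consecutive pairs $\binom{d-j}{i-j} g_j-\binom{d-j+1}{i-j+1} g_{j-1}$ over $j$ with $j\equiv i\pmod{2}$; the leading sign is $+$ because the two exponents of $-1$ differ by one, and when $i$ is even the bottom pair degenerates --- as $g_1=0$ --- to the single nonnegative term $\binom{d-2}{i-2} g_2$. In a genuine pair, either $g_{j-1}=0$, so the pair equals $\binom{d-j}{i-j} g_j\ge 0$, or $g_{j-1}>0$, and then $g_j\ge\frac{n-j+1}{j}\, g_{j-1}$ forces the positive term to dominate the negative one once $n\ge j-1+\frac{j(d-j+1)}{i-j+1}$. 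Only finitely many such thresholds occur, so there is $n_0=n_0(d)$ such that for $n\ge n_0$ every pair is nonnegative; hence $S\ge 0$, which gives $h_i(\Delta)\le h_i(C(n,d))$ for all $i\le\lfloor d/2\rfloor$, and therefore $f_i(\Delta)\le f_i(C(n,d))$ for all $i$ by the first paragraph.

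The crux --- and the only place where ``$n$ sufficiently large'' is genuinely needed --- is the nonnegativity of $S$: the defects $g_j$ are individually nonnegative but enter $h_i$ with alternating signs, so $S\ge 0$ is not a formality and really relies on the supersaturation inequality to let the contribution at level $j$ absorb the one at level $j-1$. The bookkeeping in the first paragraph (that after Dehn--Sommerville the passage from $h_0,\dots,h_{\lfloor d/2\rfloor}$ back to the full $f$-vector has nonnegative coefficients) is the standard reduction underlying every proof of the Upper Bound Theorem, and I would simply invoke it.
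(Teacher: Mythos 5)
The paper does not prove this statement at all --- it is recalled as Klee's Upper Bound Theorem with a citation to \cite{Klee-UBT} --- so there is no in-paper argument to compare against; what you have written is a self-contained proof of the cited result, and it is correct. Your route is the standard $h$-vector reduction (Dehn--Sommerville for Eulerian complexes plus nonnegativity of the folded coefficients, both of which you explicitly flag as classical inputs rather than prove), combined with a clean way of handling the step where ``$n$ sufficiently large'' enters: writing $h_i(C(n,d))-h_i(\Delta)=\sum_{j=2}^{i}(-1)^{i-j}\binom{d-j}{i-j}g_j$ in terms of the non-face counts $g_j$ and using the supersaturation inequality $(n-j)g_j\le (j+1)g_{j+1}$ to make each consecutive pair $\binom{d-j}{i-j}g_j-\binom{d-j+1}{i-j+1}g_{j-1}$ nonnegative once $n$ exceeds a threshold depending only on $d$. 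I checked the details: the pairing exhausts the sum (with the degenerate bottom pair when $i$ is even, using $g_1=0$), the coefficient $\binom{d-j}{i-j}$ is indeed positive for $j\le i\le\lfloor d/2\rfloor$, and the supersaturation count is right (each $(j+1)$-element non-face arises from at most $j+1$ pairs $(N,v)$). This is essentially a modern reconstruction of Klee's argument, which is what makes the hypothesis ``$n$ sufficiently large'' unavoidable here, in contrast to the Stanley/McMullen proofs for spheres and polytopes where Cohen--Macaulayness or shellability gives $h_i\le\binom{n-d+i-1}{i}$ for all $n$. The only caveats worth recording are definitional: one needs $\Delta$ pure with all $n$ vertices present (so that the $h$-vector and $g_0=g_1=0$ make sense), which is the intended reading of ``Eulerian'' in the paper.
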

	
	\begin{corollary}  \label{cor:Eulerian}
		Let $d\geq 2$. Then for every $(d-1)$-dimensional Eulerian complex $\Delta$ with $n$ vertices,  $T(\Delta) \leq n-c n^{1/2}$ if $d$ is even and $T(\Delta) \leq n-c n^{(d+1)/(2d)}$ if $d$ is odd. Here $c>0$ is an absolute constant independent of $n$ and $d$.
	\end{corollary}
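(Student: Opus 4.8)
The plan is to combine the two results already established in this section: the transversal bound of Theorem~\ref{lm: pure complex transversal} and Klee's Upper Bound Theorem. Let $\Delta$ be a $(d-1)$-dimensional Eulerian complex with $n$ vertices, and set $m:=f_{d-1}(\Delta)$. By Klee's Upper Bound Theorem, there is a constant $C_d>0$, depending only on $d$, such that $m\leq C_d\, n^{\lfloor d/2\rfloor}$ for all $n$ sufficiently large. Feeding this into the estimate $T(\Delta)\leq n+1-\tfrac{1}{e}\,n\,m^{-1/d}$ of Theorem~\ref{lm: pure complex transversal} gives, for $n$ sufficiently large,
\[
T(\Delta)\;\leq\; n+1-\frac{1}{e}\,C_d^{-1/d}\, n^{\,1-\lfloor d/2\rfloor/d}.
\]

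Next I would identify the exponent $\gamma_d:=1-\lfloor d/2\rfloor/d$. If $d=2k$ is even, then $\lfloor d/2\rfloor/d=1/2$, so $\gamma_d=1/2$. If $d=2k+1$ is odd, then $\lfloor d/2\rfloor/d=k/(2k+1)$, so $\gamma_d=1-k/(2k+1)=(k+1)/(2k+1)=(d+1)/(2d)$. In both cases $\gamma_d>0$, so $n^{\gamma_d}\to\infty$ and the additive ``$+1$'' can be absorbed into the leading term: for $n$ large enough, $\tfrac{1}{e}C_d^{-1/d}n^{\gamma_d}-1\geq \tfrac{1}{2e}C_d^{-1/d}n^{\gamma_d}$, which yields the asserted inequality with constant $\tfrac{1}{2e}C_d^{-1/d}$ on that range of $n$.

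To get the bound for all $n$, I would handle the remaining bounded range of small $n$ by the trivial observation that any set of at most $d-1$ vertices of a pure $(d-1)$-complex contains no facet and is therefore a weak independent set; hence $\alpha(\Delta)\geq d-1\geq 1$, and Lemma~\ref{lm: transversal property}(1) gives $T(\Delta)\leq n-1$. Choosing $c_d$ small enough that also $c_d\, n^{\gamma_d}\leq 1$ throughout this finite range, and then taking $c_d$ to be the minimum of the two constants obtained, completes the argument. I do not expect any genuine obstacle here: all the real work is already contained in Theorem~\ref{lm: pure complex transversal} and Klee's Upper Bound Theorem, and the only care needed is the bookkeeping of the exponent $1-\lfloor d/2\rfloor/d$ and of the additive and multiplicative constants.
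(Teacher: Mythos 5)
Your proposal is correct and is exactly the argument the paper intends (the corollary is stated without proof as an immediate consequence of Theorem~\ref{lm: pure complex transversal} and Klee's Upper Bound Theorem): substituting $m=O(n^{\lfloor d/2\rfloor})$ into $T(\Delta)\leq n+1-\tfrac{1}{e}nm^{-1/d}$ and checking that $1-\lfloor d/2\rfloor/d$ equals $1/2$ for even $d$ and $(d+1)/(2d)$ for odd $d$. Your extra bookkeeping to absorb the $+1$ and to cover the finitely many small values of $n$ by shrinking $c_d$ is sound and makes the statement hold for all $n$ as claimed.
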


	\begin{remark}
	In \cite[Theorems 1.3 and 6.7]{BriggsDobbinsLee}, it is proved that the weak chromatic number of a simplicial $d$-polytope or a simplicial $(d-1)$-sphere with $n$ vertices satisfies $\chi_w=O\big(n^{\frac{\lceil (d-1)/2\rceil-1}{d-1}}\big)$. Hence the transversal number of a simplicial $(d-1)$-sphere with $n$ vertices is $\leq n-n/\chi_w=n-\Omega\big(n^{\frac{\lfloor (d+1)/2\rfloor}{d-1}}\big)$, regardless of the number of facets. This bound is slightly better than the one in Corollary \ref{cor:Eulerian}. On the other hand, the bound of Corollary~\ref{cor:Eulerian} 
	holds for all Eulerian complexes rather than just simplicial spheres.
\end{remark}

    Our next goal is to construct pure complexes with transversal numbers close to the bound in Theorem~\ref{lm: pure complex transversal}; see Corollary \ref{cor:pure}, Proposition \ref{prop: pure}, and Corollary \ref{cor: pseudomanifold}. These constructions are inspired by the Gale evenness condition.
	\begin{definition}
		Let $k\geq 1$ be a constant. Consider any function $s: \mathbb{N}\to \mathbb{N}$, $n\mapsto s(n)$, such that $s=o(n)$. For $n$ sufficiently large,  define $\mathcal{F}^{n, d}_{s}$ to be the following family of subsets of $[n]$:
		\begin{enumerate}
			\item If $d=2k$, then let
			$$\mathcal{F}^{n,d}_{s}=\{\{i_1<i_2<\dots< i_{2k-1}< i_{2k}\}: \,\exists 1\leq \ell \leq s\;\; \text{s.~t.} \; \forall\, 1\leq j\leq k, \; i_{2j}-i_{2j-1}=\ell \}.$$
			\item If $d=2k+1$, then let
			$$\mathcal{F}^{n, d}_{s}=\left\{\{i_1<i_2<\dots<i_{2k}< i_{2k+1}\} : 
			\begin{array}{ll} i_{2k+1}-i_{2k}\leq s\mbox{ and }\\ 
			 \exists 1\leq \ell \leq s \;\mbox{ s.~t. }  \forall 1\leq j\leq k, \; i_{2j}-i_{2j-1}=\ell \end{array}\right\}.$$
		\end{enumerate}
	\end{definition}
	\noindent It follows that $|\mathcal{F}^{n,d}_{s}|= \binom{n-k}{k}+\binom{n-2k}{k}+\dots+\binom{n-sk}{k}=\Theta(sn^k)$ if $d=2k$ and, similarly, $|\mathcal{F}^{n,d}_{s}|=\Theta(s^2n^k)$ if $d=2k+1$.
	\begin{theorem} \label{prop:T(F_s)}
	Let $d\geq 2$. The transversal number of the pure $(d-1)$-dimensional complex with the set of facets given by $\mathcal{F}^{n,d}_{s}$ is at least $n(1-\frac{1}{s+1})-ks$ if $d=2k$, and at least $n(1-\frac{2}{s+2})-2ks$ if $d=2k+1$. Furthermore, $\mathcal{F}^{n, d}_{s}$ has a transversal of size $n-\lfloor \frac{n}{s+1}\rfloor$.
	\end{theorem}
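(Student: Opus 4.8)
I would prove both assertions by passing to the complement: write $\alpha:=n-T$ for the independence number of the hypergraph, i.e. the maximum size of a set $S\subseteq[n]$ containing no member of $\mathcal{F}^{n,d}_{s}$. The ``furthermore'' part is then the easy direction: the $(s+1)$-separated set $I=\{1,\,1+(s+1),\,1+2(s+1),\dots\}\cap[n]$ has no two elements at distance $\le s$, and every facet in $\mathcal{F}^{n,d}_{s}$ contains two elements at distance $\ell\le s$ (namely $i_1$ and $i_2$), so $I$ is independent; since $|I|\ge\lfloor n/(s+1)\rfloor$, the complement of a suitable $\lfloor n/(s+1)\rfloor$-subset of $I$ is a transversal of the stated size. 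The rest of the plan concerns the bound $\alpha\le\frac{n}{s+1}+ks$ (for $d=2k$) and $\alpha\le\frac{2n}{s+2}+2ks$ (for $d=2k+1$).

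\textbf{An extraction lemma.} Fix a maximum independent set $S=\{x_1<x_2<\dots<x_N\}$ and let $g_i=x_{i+1}-x_i$ be its consecutive gaps. The combinatorial heart is a local statement: if, for some fixed $\ell\in[1,s]$, too many of the $g_i$ equal $\ell$, then $S$ already contains a facet. To see this, partition the set $\{i:g_i=\ell\}$ into maximal runs of consecutive indices; a run of length $\lambda$ gives an arithmetic progression $x_i,x_i+\ell,\dots,x_i+\lambda\ell$ inside $S$, from which one reads off $\lceil\lambda/2\rceil$ pairwise disjoint, left-to-right ordered $\ell$-edges $\{x_i,x_i+\ell\}$, and edges coming from different runs are automatically disjoint and ordered. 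Thus $\sum_{\text{runs}}\lceil\lambda/2\rceil\ge k$ would assemble $k$ such edges into a facet of $\mathcal{F}^{n,2k}_{s}$; so for independent $S$ one has $\sum_{\text{runs}}\lceil\lambda/2\rceil\le k-1$, and optimizing this constraint forces $m_\ell:=\#\{i:g_i=\ell\}=\sum_{\text{runs}}\lambda\le 2(k-1)$ for every $\ell\le s$.

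\textbf{Combining (even and odd case).} For $d=2k$ I would finish with a \emph{weighted} counting step: splitting the $N-1$ gaps into small ones ($\le s$) and large ones ($\ge s+1$), and using $x_N-x_1=\sum_{\ell=1}^s \ell\,m_\ell+\sum_{g_i>s}g_i\le n-1$, one gets
\[
N-1=\sum_{\ell=1}^s m_\ell+\#\{i:g_i>s\}\le \frac{n-1}{s+1}+\sum_{\ell=1}^s m_\ell\cdot\frac{s+1-\ell}{s+1}.
\]
Plugging in $m_\ell\le 2(k-1)$ and $\sum_{\ell=1}^s(s+1-\ell)=\binom{s+1}{2}$ collapses the last term to exactly $(k-1)s$, which gives $\alpha=N\le\frac{n}{s+1}+ks$; weighting each small gap by its own length $\ell$ is precisely what compensates the factor $2$ in $m_\ell\le 2(k-1)$. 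For $d=2k+1$ a facet additionally needs an element within distance $s$ to the right of the $2k$-chain, so I would run the same run/AP analysis on the ``$\ell$-good'' indices, those $i$ with $g_i=\ell$ \emph{and} $g_{i+1}\le s$, arranging the extraction so that every extracted edge has an $\ell$-good left endpoint (which supplies the required extra vertex $i_{2k+1}$); this yields $\#\{i:g_i=\ell,\ g_{i+1}\le s\}\le 2(k-1)$, hence $G:=\#\{i:g_i\le s,\ g_{i+1}\le s\}\le 2(k-1)s$. Decomposing $S$ into its clusters (maximal blocks of elements with consecutive gaps $\le s$), one has $G=\sum_{\text{clusters }C}(t_C-2)$ over clusters with $t_C\ge 2$ elements, and since the clusters together with the $\ge s+1$ gaps between them fit in $[1,n]$, a short computation gives $N\le\frac{2n}{s+2}+\frac{s(G+2)}{s+2}\le\frac{2n}{s+2}+2ks$.

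\textbf{Main obstacle.} The delicate point is the extraction lemma together with its constant: a naive pigeonhole over gap values gives only $m_\ell=O(ks)$, which loses a factor of $2$ relative to what is claimed, and the two ingredients that recover the sharp bound are the run/arithmetic-progression analysis (yielding $m_\ell\le 2(k-1)$) and the choice to weight gaps by their length in the counting. The odd case adds one more layer of care: one must make sure the configuration extracted is genuinely of the odd type, i.e. admits a valid $i_{2k+1}$, which is why the extraction has to be organized around $\ell$-good positions and why the cluster decomposition has to enter the final estimate.
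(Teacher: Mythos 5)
Your proof is correct, and it reaches exactly the stated bounds, but the bookkeeping differs from the paper's in both cases, so a comparison is worthwhile. The shared core is the reduction to the independence number and the observation that $k$ pairwise separated pairs with a common gap $\ell\le s$ (plus, when $d=2k+1$, one further element within distance $s$ of the last pair) assemble into a facet. For $d=2k$ the paper sidesteps the overlapping-pairs issue entirely: it splits the gaps of $\mathcal{T}^c$ into the two alternating parity classes, keeps the class of smaller total length, and notes that the designated pairs in one parity class are automatically separated, so each value $\ell\le s$ occurs at most $k-1$ times; your version keeps all consecutive gaps, handles overlaps by the maximal-run/arithmetic-progression extraction (hence the weaker $m_\ell\le 2(k-1)$), and recovers the same constant by weighting each small gap by its length $\ell$ — your diagnosis that this weighting is what compensates the lost factor of $2$ is exactly right. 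For $d=2k+1$ the paper instead partitions the gaps into disjoint consecutive double-gaps $(a_{2i-1},a_{2i},a_{2i+1})$, calls a double-gap small if its span is at most $s+1$, bounds the number of large ones by $n/(s+2)$ and the number of small ones by $(k-1)s$ via pigeonhole on the first gap; your route through $\ell$-good indices and the cluster decomposition is a genuinely different decomposition that lands on the same $\frac{2n}{s+2}+O(ks)$ estimate. Both arguments are equally elementary and tight up to the $O(ks)$ error; the paper's alternation and double-gap tricks are a bit shorter, while your run/cluster analysis is somewhat more systematic and makes the role of each hypothesis (the common $\ell$, the trailing element $i_{2k+1}$) more transparent.
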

	
	\begin{proof}
 	Independently of whether $d$ is even or odd, the complement of multiples of $s+1$ in $[n]$ is a transversal of $\mathcal{F}^{n, d}_{s}$ . Hence $T(\mathcal{F}^{n, d}_{s})\leq n-\lfloor \frac{n}{s+1}\rfloor$.

		For $d=2k$, consider a transversal $\mathcal{T}$ of $\mathcal{F}^{n,d}_s$ and let $\T^c=[n]\backslash \T=\{a_1<a_2<\dots<a_p\}$. Note that $$(a_2-a_1)+(a_3-a_2)+\dots+(a_p-a_{p-1})=a_p-a_1<n,$$
		and so at least one of the following inequalities holds: $$(a_2-a_1)+(a_4-a_3)+\dots <n/2, \quad 
		(a_3-a_2)+(a_5-a_4)+\dots <n/2.$$
		Assume w.l.o.g.~that $\sum_{k=1}^{\lfloor p/2\rfloor} (a_{2k}-a_{2k-1})<n/2$. Consider the multiset $M=\{a_2-a_1,a_4-a_3, \dots\}$. If some number $1\leq \ell\leq s$ appears in $M$ at least $k$ times, say, $$\ell=a_{2j_1}-a_{2j_1-1}= \cdots= a_{2j_k}-a_{2j_k-1},$$ then $\{a_{2j_1-1}, a_{2j_1},\ldots, a_{2j_k-1}, a_{2j_k}\}$ is an element of $\mathcal{F}^{n, d}_{s}$ that is disjoint from $\T$. Hence for $\T$ to be a transversal, $M$ must contain at most $k-1$ elements equal to $t$ for each $1\leq t \leq s$; all remaining elements of $M$ must be $\geq s+1$. Since the total number of elements in $M$ is $\lfloor  p/2\rfloor\geq\frac{n-|\T|-1}{2}$, it follows that 
		\begin{eqnarray*}\frac{n}{2}>\sum_{k=1}^{\lfloor p/2\rfloor} (a_{2k}-a_{2k-1})&\geq& (k-1)(1+2+\dots +s)+(|M|-(k-1)s)(s+1)\\
		&\geq&\frac{(n-|\T|-1-(k-1)s)(s+1)}{2}.
		\end{eqnarray*}
		In other words, $\frac{n}{s+1}\geq n-|\T|-1-(k-1)s$, and hence $|\T|\geq n(1-\frac{1}{s+1})-ks$.
		
		Now consider the case of $d=2k+1$. Again let $\mathcal{T}$ be a transversal of $\mathcal{F}^{n,d}_s$, let $\mathcal{T}^c=\{a_1<a_2<\dots<a_p\}$, and consider the multiset $M=\{a_2-a_1, \dots, a_p-a_{p-1}\}$. W.l.o.g. assume that $p$ is odd. We call a pair $\{a_{2i}-a_{2i-1}, a_{2i+1}-a_{2i}\}$ large if $a_{2i+1}-a_{2i-1}\geq s+2$, and call it small otherwise. Since the sum of all elements of $M$ is less than $n$, the number of large pairs is at most $\frac{n}{s+2}$. If there are at least $(k-1)s+1$ small pairs with $x_{2i+1}-x_{2i-1}\leq s+1$ (in particular, $x_{2i}-x_{2i-1}\leq s$ and $x_{2i+1}-x_{2i}\leq s$), then there exists an $1\leq \ell \leq s$ and indices $i_1<i_2<\dots<i_k$ such that $x_{2i_1}-x_{2i_1-1}=\dots=x_{2i_k}-x_{2i_k-1}=\ell$ and $x_{2i_k+1}-x_{2i_k}\leq s$. But then $\{x_{2i_1-1}, x_{2i_1}, \dots, x_{2i_k-1}, x_{2i_k}, x_{2i_k+1}\}$ is an element of $\mathcal{F}^{n,d}_s$ that is disjoint from $\mathcal{T}$, contradicting our assumption that $\mathcal{T}$ is a transversal of $\mathcal{F}^{n, d}_s$.
		
		Hence the number of small pairs is at most $(k-1)s$ and $\frac{|\mathcal{T}^c|}{2}\leq \left( (k-1)s+\frac{n}{s+2}\right) +1$. Consequently,
		$$|\mathcal{T}|=n-|\mathcal{T}^c|\geq n-2(k-1)s-\frac{2n}{s+2}-2\geq n\left(1-\frac{2}{s+2}\right)-2ks.$$ 
		This completes the proof.
	\end{proof}
	
	The following result is an immediate consequence of Theorem \ref{prop:T(F_s)} applied to the pure complex whose facets are given by $\mathcal{F}^{n,d}_{s}$ with $s\leq\sqrt{n}$. When $s=\sqrt{n}$, the transversal number of the resulting complex almost matches the bound of Theorem \ref{lm: pure complex transversal}.

	\begin{corollary} \label{cor:pure} 
 Fix $d\geq 2$, and let $s=s(n)$ be such that $1\leq s \leq \sqrt{n}$. Then for all sufficiently large $n$,
		there exists a pure $(d-1)$-dimensional complex with $n$ vertices and $\Theta(n^{\lfloor d/2\rfloor}s^{1+(d \mod 2)})$ facets whose transversal number is $n-\Theta(n/s)$. In particular, there exists a pure $(d-1)$-dimensional complex with $n$ vertices and $\Theta(n^{\frac{d+1}{2}})$ facets whose transversal number is $n-\Theta(n^{1/2})$.
	\end{corollary}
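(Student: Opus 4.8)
The plan is to take $\Delta$ to be the pure $(d-1)$-dimensional complex whose set of facets is $\mathcal{F}^{n,d}_s$, and then read off the statement from Theorem \ref{prop:T(F_s)} together with the facet count already recorded immediately after the definition of $\mathcal{F}^{n,d}_s$. Write $k=\lfloor d/2\rfloor$. For the number of facets, that paragraph states $|\mathcal{F}^{n,d}_s|=\Theta(sn^k)$ when $d=2k$ and $|\mathcal{F}^{n,d}_s|=\Theta(s^2n^k)$ when $d=2k+1$; both cases are subsumed in $\Theta\bigl(n^{\lfloor d/2\rfloor}s^{\,1+(d\bmod 2)}\bigr)$. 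The only thing to check here is that each summand $\binom{n-k\ell}{k}$ with $1\le \ell\le s$ is uniformly $\Theta(n^k)$, which holds because $s=o(n)$, so that the $s$-term sum is $\Theta(sn^k)$ and, for $d=2k+1$, the extra freedom $i_{2k+1}-i_{2k}\le s$ contributes one more factor of $\Theta(s)$. Specializing to $s=\lfloor\sqrt n\rfloor$ then gives $n^{\lfloor d/2\rfloor}(\sqrt n)^{1+(d\bmod 2)}=n^{(d+1)/2}$ facets, since $\lfloor d/2\rfloor+\tfrac12\bigl(1+(d\bmod 2)\bigr)=(d+1)/2$.

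Next I would estimate the transversal number. The upper bound is immediate from the last sentence of Theorem \ref{prop:T(F_s)}: $T(\Delta)\le n-\lfloor n/(s+1)\rfloor\le n-\bigl(n/(2s)-1\bigr)$, where I used $s\ge 1$; and since $n/(2s)\ge\sqrt n/2\to\infty$, this is $n-\Omega(n/s)$. For the lower bound, Theorem \ref{prop:T(F_s)} gives $T(\Delta)\ge n-\frac{n}{s+1}-ks$ when $d=2k$, and the analogous inequality $T(\Delta)\ge n-\frac{2n}{s+2}-2ks$ when $d=2k+1$; in either case $\frac{n}{s+1},\frac{2n}{s+2}=O(n/s)$. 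Thus $T(\Delta)=n-O(n/s)-O(ks)$, and combined with the upper bound this yields $T(\Delta)=n-\Theta(n/s)$, provided the additive error $ks$ is itself $O(n/s)$. This last point is exactly where the hypothesis $1\le s\le\sqrt n$ enters: it forces $s\le n/s$, hence $ks\le k\cdot n/s=O(n/s)$. Taking $s=\lfloor\sqrt n\rfloor$ then produces a complex with $\Theta(n^{(d+1)/2})$ facets and $T(\Delta)=n-\Theta(n/\sqrt n)=n-\Theta(n^{1/2})$, which is the ``in particular'' assertion.

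There is no serious obstacle here: the corollary is a direct bookkeeping consequence of Theorem \ref{prop:T(F_s)} and the facet count stated with the definition. The one step deserving a moment's care is precisely the reconciliation of the two error terms appearing in Theorem \ref{prop:T(F_s)} --- the multiplicative gap $\Theta(n/s)$ and the additive loss $\Theta(ks)$ coming from the ``at most $k-1$ copies of each small difference'' argument --- and the observation that these are of the same order exactly when $s\le\sqrt n$, which explains why that constraint is imposed in the statement.
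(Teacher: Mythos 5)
Your proposal is correct and takes essentially the same route as the paper, which states the corollary as an immediate consequence of Theorem \ref{prop:T(F_s)} applied to the complex with facet set $\mathcal{F}^{n,d}_{s}$. Your bookkeeping --- in particular the observation that the hypothesis $s\le\sqrt{n}$ is exactly what makes the additive loss $ks$ of order $O(n/s)$, so that the upper and lower bounds match to give $n-\Theta(n/s)$ --- correctly supplies the details the paper leaves implicit.
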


Next we discuss two variations of this construction. The first variation complements Corollary \ref{cor:pure} and allows us to construct a pure $(d-1)$-dimensional complex with $n$ vertices and $ m \ll n^{\frac{d+1}{2}}$ facets that has a relatively large transversal number. It is based on the following simple idea. If $\Delta$ is a pure $(d-1)$-dimensional complex with $n$ vertices, then the disjoint union of $b$ copies of $\Delta$, denoted by $b\Delta$, has $f_{0}(b\Delta)=bf_{0}(\Delta)$, $f_{d-1}(b\Delta)=bf_{d-1}(\Delta)$, and $T(b\Delta)=bT(\Delta)$.

Given $n,m$ and $d$, we choose $b$ as some function $b(n,m,d)$, to be specified below, and we let $s=\sqrt{n/b}$. Consider the  $(d-1)$-dimensional complex $\Delta^{n/b, d}_s$ whose set of facets is given by $\mathcal{F}^{n/b, d}_s$. Then $b\Delta^{n/b, d}_s$ has $n$ vertices and $\Theta \big(b \cdot (n/b)^{(d+1)/2}\big)$ facets; furthermore, $T(b\Delta^{n/b,d}_s)=b(n/b)\big(1-\Theta(\frac{1}{s})\big)=n-\Theta(n/s)$. Taking $b=\big(\frac{n^{d+1}}{m^2}\big)^{\frac{1}{d-1}}$ ensures that the resulting complex has $\Theta(m)$ facets and implies the following result.

\begin{proposition}\label{prop: pure}
Fix $d\geq 2$. 
Then for $n$ and $m$ such that $1 \ll n \ll m \ll n^{\frac{d+1}{2}}$, there exists a pure $(d-1)$-dimensional complex with $n$ vertices and $\Theta(m)$ facets whose transversal number is $n-\Theta\left(n^{\frac{d}{d-1}}m^{-\frac{1}{d-1}}\right)$. In particular, for all sufficiently large $n$, there exists a pure $(d-1)$-dimensional complex with $n$ vertices and $\Theta\big(n^{\lfloor d/2\rfloor}\big)$ facets whose transversal number is $n-\Theta\big(n^{\lceil d/2 \rceil/(d-1)}\big)$.
\end{proposition}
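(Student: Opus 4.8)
The plan is to deduce the statement from Corollary~\ref{cor:pure} by means of the disjoint-union trick described above. Recall that for a pure $(d-1)$-dimensional complex $\Gamma$ and its disjoint union $b\Gamma$ of $b$ copies we have $f_0(b\Gamma)=b\,f_0(\Gamma)$, $f_{d-1}(b\Gamma)=b\,f_{d-1}(\Gamma)$, and $T(b\Gamma)=b\,T(\Gamma)$; the last identity holds because a transversal of $b\Gamma$ must restrict to a transversal on each copy, while the union of optimal transversals of the copies is a transversal of $b\Gamma$. So it suffices to exhibit, for a suitable $t\to\infty$, a pure $(d-1)$-complex $\Gamma$ on $t$ vertices with $\Theta\!\big(t^{(d+1)/2}\big)$ facets and $T(\Gamma)=t-\Theta(\sqrt t)$, and to take roughly $n/t$ copies of it. Such a $\Gamma$ is furnished by Corollary~\ref{cor:pure} (equivalently, by Theorem~\ref{prop:T(F_s)} applied to the complex $\Delta^{t,d}_{s}$ whose facets are $\mathcal{F}^{t,d}_{s}$, with $s=\lfloor\sqrt t\rfloor$), as soon as $t$ is large.

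The crucial point is to calibrate $t$ against $n$ and $m$. Given $n$ and $m$ with $n\ll m\ll n^{(d+1)/2}$, I would set $t$ to be the nearest integer to $(m/n)^{2/(d-1)}$ and $b:=\lfloor n/t\rfloor$, and let $\Gamma=\Delta^{t,d}_{\lfloor\sqrt t\rfloor}$. The hypothesis $m\gg n$ ensures $t\to\infty$ (so Corollary~\ref{cor:pure} applies and $b=o(n)$), while $m\ll n^{(d+1)/2}$ ensures $t=o(n)$, hence $b\to\infty$. For this $\Gamma$ and $b$, the complex $b\Gamma$ has $bt$ vertices; its number of facets is $b\cdot\Theta\!\big(t^{(d+1)/2}\big)=\Theta\!\big(n\,t^{(d-1)/2}\big)=\Theta(m)$, using $t^{(d-1)/2}=\Theta(m/n)$; and its transversal number equals $b\big(t-\Theta(\sqrt t)\big)=bt-\Theta\!\big(n/\sqrt t\big)=bt-\Theta\!\big(n^{d/(d-1)}m^{-1/(d-1)}\big)$, using $\sqrt t=\Theta\!\big((m/n)^{1/(d-1)}\big)$. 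If $bt$ equalled $n$ this would already give the main assertion.

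The one genuinely technical step --- and the main obstacle --- is to pass from $bt$ to $n$, i.e.\ to deal with the $r:=n-bt$ leftover vertices, of which we only know $0\le r<t$; when $d$ is large and $m$ is close to $n^{(d+1)/2}$ this $r$ can exceed the target error $n/\sqrt t$, so simply adjoining the leftover vertices as isolated points does not suffice. Instead I would cover these $r$ vertices by one more, smaller, copy of the same construction, namely the complex with facet set $\mathcal{F}^{r,d}_{\lfloor\sqrt r\rfloor}$ (if $r\ge d$; otherwise the at most $d-1$ leftover vertices may simply be added to one copy as isolated vertices). This extra piece contributes $\Theta\!\big(r^{(d+1)/2}\big)=o\!\big(bt^{(d+1)/2}\big)=o(m)$ additional facets and at most $r$ --- more precisely $r-\Theta(\sqrt r)$ --- to the transversal number, so the resulting complex has exactly $n$ vertices, $\Theta(m)$ facets, and transversal number $n-\Theta(n/\sqrt t)-O(\sqrt r)=n-\Theta\!\big(n^{d/(d-1)}m^{-1/(d-1)}\big)$, since $\sqrt r<\sqrt t=o(n/\sqrt t)$ (here $t=o(n)$ is used once more). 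This proves the main statement. The ``in particular'' clause follows immediately by taking $m=\Theta\!\big(n^{\lfloor d/2\rfloor}\big)$ --- permissible since $n\ll n^{\lfloor d/2\rfloor}\ll n^{(d+1)/2}$ --- whereupon $n^{d/(d-1)}m^{-1/(d-1)}=\Theta\!\big(n^{(d-\lfloor d/2\rfloor)/(d-1)}\big)=\Theta\!\big(n^{\lceil d/2\rceil/(d-1)}\big)$.
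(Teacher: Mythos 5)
Your proposal is correct and follows essentially the same route as the paper: take roughly $n/t$ disjoint copies of the complex $\Delta^{t,d}_{\lfloor\sqrt{t}\rfloor}$ from Corollary~\ref{cor:pure} with $t=\Theta\big((m/n)^{2/(d-1)}\big)$, which is exactly the paper's choice $b=\big(n^{d+1}/m^2\big)^{1/(d-1)}$, $t=n/b$, $s=\sqrt{n/b}$. The only difference is that you treat the divisibility issue (the $r=n-bt$ leftover vertices) explicitly, whereas the paper simply assumes $b$ divides $n$; your patch via an extra small copy is sound since its deficit $O(\sqrt{r})=O(\sqrt{t})=o(n/\sqrt{t})$ is absorbed into the error term.
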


Our second variation produces a family of odd-dimensional pseudomanifolds with large transversal numbers. Recall that a pure complex is a \emph{pseudomanifold} if every ridge is contained in at most two facets.
 
\begin{definition}
	Fix $k\geq 2$ and let $n$ be sufficiently large. Let $a, s$ be functions of $n$ such that $a=o(n)$, $s=o(\frac{n}{a})$, and $a, s \gg k$. Partition $n$ into $a$ intervals of size as equal as possible and denote these intervals by $I_1, I_2, \dots, I_a$. Let $\mathcal{H}^n_{a, s}$ be the following collection of sets
	$$\left\{ \{i_1, i_1+\ell, \dots, i_k, i_k+\ell\}: \begin{array}{ll} 1\leq \ell \leq s \mbox{ and }\\ \exists 1\leq b_1<\dots< b_k\leq a \; \mbox{ s.t. } \{i_j, i_j+\ell\}\subset I_{b_j}\, \forall 1\leq j\leq k \end{array} \right\}.$$
\end{definition}

\begin{lemma}
	The pure complex with the set of facets given by $\mathcal{H}^n_{a, s}$ is a $(2k-1)$-dimensional pseudomanifold with $n$ vertices and $m=\Theta(sn^k)$ facets.
\end{lemma}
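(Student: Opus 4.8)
The assertion bundles three separate facts about the pure complex $\Delta$ with facet set $\mathcal{H}^n_{a,s}$: that $\Delta$ is pure of dimension $2k-1$ and every ridge of $\Delta$ lies in at most two facets (i.e.\ $\Delta$ is a $(2k-1)$-dimensional pseudomanifold), that $f_0(\Delta)=n$, and that $f_{2k-1}(\Delta)=\Theta(sn^k)$. I would dispose of the elementary points and the facet count first, and then concentrate on the ridge condition, which is the real content.

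Every member of $\mathcal{H}^n_{a,s}$ has the form $\{i_1,i_1+\ell,\dots,i_k,i_k+\ell\}$ with $\ell\ge 1$, and its $k$ pairs $\{i_j,i_j+\ell\}$ lie in pairwise distinct intervals $I_{b_j}$; hence it has exactly $2k$ distinct elements. Since all generators have the same cardinality, none is properly contained in another, so the facet set of $\Delta$ is exactly $\mathcal{H}^n_{a,s}$ and $\Delta$ is pure of dimension $2k-1$. For the vertex count, note that for $n$ large every interval $I_b$ has at least $s+1\ge 2$ elements (using $s=o(n/a)$) and there are $a\ge k$ intervals (using $a\gg k$); given any $v\in I_b$, complete $v$ to an adjacent pair inside $I_b$, choose adjacent pairs inside $k-1$ of the remaining intervals, and take $\ell=1$. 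This produces a facet through $v$, so $f_0(\Delta)=n$.

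For the number of facets I would count directly: for fixed $\ell$ and fixed intervals $b_1<\dots<b_k$, the number of admissible left endpoints $i_j\in I_{b_j}$ with $i_j+\ell\in I_{b_j}$ is $|I_{b_j}|-\ell$, so
\[ f_{2k-1}(\Delta)\;=\;\sum_{\ell=1}^{s}\ \sum_{1\le b_1<\dots<b_k\le a}\ \prod_{j=1}^{k}\bigl(|I_{b_j}|-\ell\bigr). \]
The upper bound $f_{2k-1}(\Delta)\le s\binom{a}{k}\lceil n/a\rceil^{k}=O(sn^k)$ is immediate. For the lower bound, $s=o(n/a)$ gives $|I_{b_j}|-\ell\ge\lfloor n/a\rfloor-s=(1-o(1))n/a$ for $n$ large, and $a\gg k$ gives $\binom{a}{k}=(1-o(1))a^k/k!$; multiplying yields $f_{2k-1}(\Delta)=\Omega(sn^k)$, whence $f_{2k-1}(\Delta)=\Theta(sn^k)$.

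The substantive part is the ridge condition, and I expect it to be the main obstacle. Let $R$ be a ridge, i.e.\ a $(2k-1)$-subset of some facet $F_0=\{i_1,i_1+\ell,\dots,i_k,i_k+\ell\}$ with pairs $P_j\subset I_{b_j}$. Then $R=F_0\setminus\{x\}$ for some $x$, so exactly one interval among the $I_{b_j}$, say $I_m$, meets $R$ in a single element $r$, while each of the remaining $k-1$ intervals contains the full gap-$\ell$ pair $P_j$ of $R$. Since $k\ge 2$, at least one such ``doubly covered'' interval exists, and therefore $\ell$ is determined by $R$ alone: it is the common difference of the two elements of $R$ lying in any doubly covered interval. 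Now let $F$ be an arbitrary facet with $R\subseteq F$, say $F=R\cup\{y\}$. Its $k$ pairs lie in $k$ distinct intervals with a single common gap; matching this against $R$ forces that gap to equal $\ell$, forces the pairs of $F$ inside the doubly covered intervals to coincide with those of $R$, and hence forces $\{r,y\}$ to be a gap-$\ell$ pair inside $I_m$. Thus $y$ equals $r+\ell$ or $r-\ell$ (whichever lies in $I_m$), so at most two facets contain $R$, and $\Delta$ is a pseudomanifold. The delicate points here are precisely the claim that the gap $\ell$ is \emph{forced} to agree across all doubly covered intervals (which is where $k\ge 2$ is used) and the resulting reduction to at most two explicit sets; making this rigorous also requires clearing away the small-$n$ degeneracies (interval sizes, and the cases where $r\pm\ell$ leaves $I_m$), which is where the hypotheses $a\gg k$ and $s=o(n/a)$ come in.
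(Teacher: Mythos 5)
Your proof is correct and follows essentially the same approach as the paper: a ridge has exactly one ``singleton'' element $r$ in some interval $I_m$, the common gap $\ell$ is forced by the doubly-covered intervals, and hence the only candidate facets containing the ridge are $G\cup\{r+\ell\}$ and $G\cup\{r-\ell\}$. You are somewhat more explicit than the paper about why $\ell$ is determined (using $k\ge 2$), why the added vertex must land in $I_m$, and about the facet count, but these are precisely the points the paper's terser argument leaves implicit.
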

\begin{proof}
	Let $G$ be a ridge. It must be of the form $\{i, i_2, i_2+\ell, \dots, i_k, i_k+\ell\}$, where for each $2\leq j\leq k$, $\{i_j, i_j+\ell\}$ is a subset of one of the intervals. Assume w.l.o.g. that $i\in I_1$. If $i\leq \ell$, then $G$ is contained in a unique facet $G\cup \{i+\ell\}$. If $\max I_1 -\ell <i\leq \max I_1$, then $G$ is contained in a unique facet $G\cup \{i-\ell\}$. Otherwise, $G$ is contained in exactly two facets $G\cup \{i-\ell\}$ and $G\cup \{i+\ell\}$. Hence the complex is a $(2k-1)$-dimensional pseudomanifold. The number of facets is $\Theta\left(s\binom{a}{k}(n/a)^k\right)=\Theta(sn^k)$.
\end{proof}
\begin{proposition}
	Consider $a, s$ such that $a = \Omega(s\ln s)$ and $s=o(n/a)$. Then the transversal number of the complex with the set of facets given by $\mathcal{H}^n_{a, s}$  is $n-\Theta(n/s)$. 
\end{proposition}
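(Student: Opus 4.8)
The plan is to establish matching upper and lower bounds on $T(\mathcal{H}^n_{a,s})$, both of the form $n-\Theta(n/s)$, under the hypothesis $a=\Omega(s\ln s)$, $s=o(n/a)$. The upper bound is the easy direction and should be handled first: exactly as in Theorem~\ref{prop:T(F_s)}, the complement of the multiples of $s+1$ in $[n]$ is a transversal of $\mathcal{H}^n_{a,s}$, because every facet $\{i_1,i_1+\ell,\dots,i_k,i_k+\ell\}$ with $1\le\ell\le s$ contains two consecutive-in-the-pair entries $i_j,i_j+\ell$ whose difference is at most $s<s+1$, so it cannot avoid all residues $0\bmod(s+1)$ (one of $i_j,i_j+\ell$ is hit). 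Hence $T(\mathcal{H}^n_{a,s})\le n-\lfloor n/(s+1)\rfloor=n-\Theta(n/s)$. I should double-check this kills \emph{every} facet, not merely those with $\ell$ small; since $\ell\le s$ always, a block of $s+1$ consecutive integers contains a multiple of $s+1$, so indeed both $i_j$ and $i_j+\ell$ lying in an interval $I_{b_j}$ is irrelevant — the pair $\{i_j,i_j+\ell\}$ alone forces an intersection.

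For the lower bound, let $\mathcal{T}$ be a transversal and write $\mathcal{T}^c=[n]\setminus\mathcal{T}$. The key observation is that if $\mathcal{T}$ is a transversal, then for each $\ell\in[1,s]$ the number of intervals $I_b$ that contain a pair $\{i,i+\ell\}\subseteq I_b$ with both endpoints in $\mathcal{T}^c$ is at most $k-1$ — otherwise we could pick $k$ such intervals $I_{b_1},\dots,I_{b_k}$ and build a facet of $\mathcal{H}^n_{a,s}$ entirely inside $\mathcal{T}^c$. Summing over $\ell\in[1,s]$, the total number of (interval, $\ell$)-pairs that are ``$\mathcal{T}^c$-saturated'' is at most $(k-1)s$. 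Therefore all but at most $(k-1)s$ of the intervals $I_b$ have the property that $\mathcal{T}^c\cap I_b$ contains \emph{no} pair at distance $\le s$ for \emph{all} $\ell$ simultaneously — wait, that is too strong; more carefully, call an interval $I_b$ \emph{bad} if there exists some $\ell\le s$ with a $\mathcal{T}^c$-pair at distance $\ell$ inside $I_b$, but charge each bad interval to the set of such $\ell$. Since each $\ell$ contributes at most $k-1$ bad intervals, at most $(k-1)s$ intervals are bad (this wastes a factor, but $(k-1)s=o(n/a)\cdot a= o(n)$ and in fact $(k-1)s = o(a)$ by $a=\Omega(s\ln s)\gg s$, so this is negligible). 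For every one of the remaining $\ge a-(k-1)s$ \emph{good} intervals $I_b$, the set $\mathcal{T}^c\cap I_b$ contains no two elements within distance $s$, so $|\mathcal{T}^c\cap I_b|\le \lceil |I_b|/s\rceil = \Theta((n/a)/s) = \Theta(n/(as))$. Summing over all $a$ intervals, $|\mathcal{T}^c|\le (k-1)s\cdot(n/a) + a\cdot\Theta(n/(as)) = o(n) + \Theta(n/s) = \Theta(n/s)$, giving $T(\mathcal{T})=n-|\mathcal{T}^c|\ge n-\Theta(n/s)$, as desired.

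I expect the main obstacle to be making the charging argument clean and verifying where each hypothesis is used. The condition $s=o(n/a)$ guarantees $|I_b|\approx n/a$ is large compared to $s$, so that ``no pair within distance $s$'' inside a good interval really does force $|\mathcal{T}^c\cap I_b|=\Theta(|I_b|/s)$ rather than a degenerate bound; it also ensures the error term $(k-1)s\cdot(n/a)$ is genuinely $o(n/s)$ — indeed this needs $(k-1)s\cdot(n/a) = o(n/s)$, i.e. $s^2 = o(a/k)$, which follows from $a=\Omega(s\ln s)$ only if $s\ln s$ dominates $s^2$, which is false. So I will need to be more careful: the correct way to absorb the bad-interval contribution is to note each bad interval contributes at most $|I_b|=\Theta(n/a)$ elements to $\mathcal{T}^c$, and $(k-1)s$ bad intervals contribute $O(sn/a)$; for this to be $O(n/s)$ we would need $s^2=O(a)$. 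The hypothesis actually given, $a=\Omega(s\ln s)$, is the standard threshold for a \emph{covering/fractional} argument (cf.\ the $\ln s$ factor), which suggests the intended proof does not charge bad intervals so crudely but instead uses an LP-duality or random-residue argument: choose a uniformly random residue $r\in\{0,1,\dots,s\}$ and the shift set $\{x:x\equiv r\bmod(s+1)\}$, and argue that over the $\Omega(s\ln s)$ intervals one can, by a union-bound/coupon-collector estimate, simultaneously handle all $\ell\le s$ — the $\ln s$ is exactly the coupon-collector overhead for covering all $s$ ``colors'' $\ell$. Thus the real plan for the lower bound is: reduce to a covering problem on each interval separately (at most $k-1$ saturated intervals per $\ell$ forces, by an averaging/pigeonhole over the $a$ intervals, that a $1-O((k-1)s/a)=1-o(1)$ fraction of intervals are ``$\ell$-free simultaneously for a $1-o(1)$ fraction of $\ell$''), and then on each such interval invoke the one-dimensional fact that a set avoiding all distances in a size-$s$ set of a length-$(n/a)$ interval has size $\Theta((n/a)/s)$. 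The hardest and most delicate point is getting the quantitative bookkeeping so that the final bound is exactly $n-\Theta(n/s)$ with the constants depending only on $k$; I would model it closely on the $d=2k$ case of Theorem~\ref{prop:T(F_s)}, treating each interval $I_b$ as a miniature copy of the $\mathcal{F}^{n',2k}_s$ situation and then summing.
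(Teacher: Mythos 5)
Your setup is correct through the claim that, for each $\ell\in[1,s]$, at most $k-1$ of the intervals $I_b$ contain a pair of $\mathcal{T}^c$-elements at distance $\ell$ (and hence at most $(k-1)s$ intervals are ``bad''). But you then correctly notice that charging each bad interval its full length $\Theta(n/a)$ yields $O(sn/a)$, which is \emph{not} $O(n/s)$ under the hypothesis $a=\Omega(s\ln s)$, and here your proposal stops working. Your speculative fix --- an LP-duality / random-residue / coupon-collector argument --- is not what is needed, and I don't see how to make it give the stated bound.

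The missing idea is simpler and purely combinatorial: stratify the bad intervals by their \emph{minimum} consecutive gap. Writing $\mathcal{T}^c_j=\{x_1^j<\dots<x_{p_j}^j\}$, $M_j=\{x_2^j-x_1^j,\dots,x_{p_j}^j-x_{p_j-1}^j\}$, and $m_j=\min M_j$, the key-claim argument shows that for each $t\le s$ at most $k-1$ intervals have $m_j=t$. The crucial refinement you are missing is this: if $m_j=t$ then \emph{every} gap in $M_j$ is $\ge t$, so $|\mathcal{T}^c_j|\le \frac{n}{at}+1$, i.e.\ $|\mathcal{T}_j|\ge \frac{n}{a}\bigl(1-\frac{1}{t}\bigr)-1$. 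Thus a bad interval with small minimum gap $t$ can be almost entirely in $\mathcal{T}^c$ only when $t=1$, and the contributions decay like $1/t$. Summing over $t\in[1,s]$ with $\le k-1$ intervals per value of $t$ produces the harmonic sum $\sum_{t=1}^s \frac{1}{t}\approx\ln s$, and the total deficit from bad intervals is $O\bigl(\frac{(k-1)n\ln s}{a}\bigr)$, which is $O(n/s)$ precisely when $a=\Omega(s\ln s)$. This is where the $\ln s$ in the hypothesis enters --- it is the harmonic-sum overhead, not a covering/coupon-collector overhead. With this refinement the rest of your plan (good intervals contribute $\le \frac{n}{a(s+1)}+1$ each, sum over all intervals) goes through and gives $|\mathcal{T}|\ge n\bigl(1-\frac{1}{s+1}-\frac{k-1}{a}\ln s\bigr)-a = n-O(n/s)$.
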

\begin{proof}
The complement of multiples of $s+1$ in $[n]$ is a transversal; hence the transversal number is $n-\Omega(n/s)$. To see that the transversal number is $ n-O(n/s)$, let
 $\T$ be a transversal and let $\T_j=\T\cap I_j$. We let $\T_j^c=([n]\backslash \T)\cap I_j$ and write it as $\T^c_j=\{x_1^j<x_2^j<\dots <x^j_{p_j}\}$. Consider the multiset $M_j=\{x_2^j-x_1^j, \dots, x^j_{p_j}-x^j_{p_j-1}\}$ and let $m_j=\min M_j$.
	
	First we claim that for every $1\leq t\leq s$, at most $k-1$ of $m_j$'s are equal to $t$. Indeed, if for some $t$ there are at least $k$ of them, then there exist $1\leq j_1<j_2<\dots<j_k\leq a$ and $x^{j_\ell}_{u_\ell}, x^{j_\ell}_{u_\ell-1}\in \T^c_{j_\ell}$ such that $$t=x^{j_1}_{u_1}-x^{j_1}_{u_1-1}=\dots=x^{j_k}_{u_k}-x^{j_k}_{u_k-1}.$$ 
	But then $F=\{x^{j_1}_{u_1-1}, x^{j_1}_{u_1},\dots,x^{j_k}_{u_k-1}, x^{j_k}_{u_k}\}$ is an element of $\mathcal{H}^n_{a, s}$ disjoint from $\T$, contradicting our assumption that $\T$ is a transversal of the complex.
	
	Hence for all but $\leq (k-1)s$ intervals $I_j$, $m_j\geq s+1$. As the sum of all elements of $M_j$ is at most $n/a$, it follows that for each of these intervals, $|M_j|\leq \frac{n}{a(s+1)}$. Consequently, $|\T^c_j| \leq  \frac{n}{a(s+1)} +1$ and hence $|\T_j|\geq n/a-|\mathcal{T}^c_j|\geq \frac{n}{a}-\frac{n}{a(s+1)}-1$.
	
	Similarly, for each of at most $k-1$ intervals $I_j$ with $m_j=t\leq s$, the size of $\T_j$ is  $\geq \frac{n}{a}(1-\frac{1}{t})-1$. Thus,
	\begin{equation*}
		\begin{split}
			|\T|&=\sum_{j=1}^a|\T_j|\\
			&\geq \left(a-(k-1)s\right)\left(\frac{n}{a}-\frac{n}{a(s+1)}-1\right)+(k-1)\sum_{t=1}^{s}\left(\frac{n}{a}\big(1-\frac{1}{t}\big)-1\right)\\
			&= n\left(1-\frac{1}{s+1}\right)-\frac{(k-1)n}{a}\left( \frac{1}{1}+\frac{1}{2}+\dots+\frac{1}{s}-\frac{s}{s+1}\right)-a\\
			&\geq n\left(1-\frac{1}{s+1}-\frac{k-1}{a}\ln s\right)-a.
		\end{split}
	\end{equation*}
	Since $a= \Omega\big(s\ln s\big)$ and $s=o(n/a)$, it follows that $\frac{k-1}{a}\ln s=O\big(\frac{1}{s}\big)$ and $a=o(n/s)$. The above inequality then completes the proof.
\end{proof}

Letting $a=\sqrt{n}$ and $s=\frac{\sqrt{n}}{\ln n}$, we obtain
\begin{corollary}\label{cor: pseudomanifold}
	Let $k\geq 2$ be a constant. Then for all sufficiently large $n$, there exists a $(2k-1)$-dimensional pseudomanifold $\Pi^n$ 
	with $n$ vertices, $m=\Theta(\frac{n^{k+1/2}}{\ln n})$ facets, and $T(\Pi^n)=n-\Theta(\sqrt{n}\ln n)$.
\end{corollary}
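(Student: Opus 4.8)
The plan is to specialize the preceding Proposition. I would take $a=\lfloor\sqrt n\rfloor$ and $s=\lfloor\sqrt n/\ln n\rfloor$ (working along a subsequence of $n$, or simply allowing the $a$ intervals in the definition of $\mathcal{H}^n_{a,s}$ to have near-equal rather than exactly equal sizes; either way the roundings affect nothing below). Since $k$ is a fixed constant while $a,s\to\infty$, we have $a,s\gg k$; moreover $a=o(n)$ and $s=o(\sqrt n)=o(n/a)$, so $\mathcal{H}^n_{a,s}$ and the pseudomanifold $\Pi^n$ it generates are well defined, and the preceding Lemma guarantees that $\Pi^n$ is a $(2k-1)$-dimensional pseudomanifold with $n$ vertices and $m=\Theta(sn^k)$ facets.

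Next I would verify the two hypotheses of the Proposition for this choice. The condition $s=o(n/a)$ was just checked. For $a=\Omega(s\ln s)$, observe that $\ln s=\tfrac12\ln n-\ln\ln n\sim\tfrac12\ln n$, so $s\ln s=\frac{\sqrt n}{\ln n}\cdot\ln s=\Theta(\sqrt n)$, and therefore $a=\lfloor\sqrt n\rfloor=\Theta(\sqrt n)=\Omega(s\ln s)$. Hence the Proposition applies and yields $T(\Pi^n)=n-\Theta(n/s)$.

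Finally I would plug in the values: $m=\Theta(sn^k)=\Theta\bigl(n^{k+1/2}/\ln n\bigr)$ and $T(\Pi^n)=n-\Theta(n/s)=n-\Theta\bigl(\sqrt n\ln n\bigr)$, which is exactly the assertion of the Corollary.

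There is essentially no obstacle here; the one point worth stating explicitly is why the balance works, namely that $s$ must be chosen a $\ln n$-factor below $\sqrt n$ so that the lower-bound constraint $a=\Omega(s\ln s)$ and the upper-bound constraint $s=o(n/a)$ are simultaneously met with $a$ of order $\sqrt n$ --- the extra logarithmic factor in $s$ is exactly what the $\ln s$ in the first constraint absorbs. The integer roundings and the unequal interval sizes in the partition of $[n]$ contribute only lower-order terms, so none of the $\Theta$-estimates are affected.
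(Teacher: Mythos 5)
Your proposal is correct and is exactly the paper's argument: the authors obtain the corollary by setting $a=\sqrt{n}$ and $s=\sqrt{n}/\ln n$ in the preceding Proposition, and your verification of the hypotheses $a=\Omega(s\ln s)$ and $s=o(n/a)$, together with the substitutions $m=\Theta(sn^k)$ and $T=n-\Theta(n/s)$, fills in precisely the routine details the paper leaves implicit.
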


\section{Transversal ratios of polytopes}
The goal of this section is to introduce a new family of simplicial polytopes and use it to show that for all $k\geq 2$, $\tau^P_{2k+1}\geq 2/5$. These polytopes could be considered as ``siblings'' of cyclic polytopes. Their construction relies on {\em sewing} --- a powerful tool introduced by Shemer \cite{Shemer} that amounts to inductively constructing polytopes by adding one vertex at a time. Thus, our first task is to define this operation.

Given a simplicial $d$-polytope $P$ in $\R^d$ and a facet $F$ of $P$, let $H_F=\aff(F)$ be the supporting hyperplane that defines $F$ and let $H^-_F$  ($H^+_F$, resp.) be the open half-space determined by $H_F$ that contains the interior of $P$ (is disjoint from $P$, resp.).  We say that a point $p\in\R^d\backslash P$ lies \emph{beneath} $F$ if $p\in H_F^-$ and that $p$ lies \emph{beyond} $F$ if  $p\in H^+_F$. We also say that $p$ lies \emph{exactly beyond} a set of facets $\mathcal{F}$ if $p$ lies beyond every facet in $\mathcal{F}$ and  beneath all other facets of $P$.

Given a flag of faces $F_1\subsetneq F_2 \subsetneq\dots \subsetneq F_\ell$ in $P$, let $$\Gamma=  \st(F_1)\backslash \left(\st(F_2)\backslash \big( \dots\backslash \big(\st(F_{\ell-1})\backslash \st(F_\ell)\big)\dots\big)\right)$$ be a subcomplex of $\partial P$, where the stars are computed in $\partial P$. (For instance, if $\ell=3$, then $\Gamma=\st(F_1)\backslash \big(\st(F_2)\backslash \st(F_3)\big)$ while if $\ell=4$, then $\Gamma=\st(F_1)\backslash\big(\st(F_2)\backslash\big(\st(F_3)\backslash \st(F_4)\big)\big)$.) It is shown in \cite[Lemma 4.4]{Shemer} that there exists a point $p$ that lies exactly beyond the facets of $\Gamma$; hence we can sew this point $p$ onto $P$ to construct a simplicial polytope $\conv(P\cup p)$ whose boundary complex is obtained from $\partial P$ by replacing $\Gamma$ with $\partial \Gamma*p$. 

To introduce our new family of simplicial polytopes, we need to review some properties of the cyclic polytopes. This requires the following definition of simplicial complexes that were considered, for instance, in \cite{BilleraLee}, \cite{Kal}, and \cite{NZ-neighborly}.
 \begin{definition} \label{def:B}
	Let $k\geq 0$ and $1\leq a<b\leq n$. Define $B([a,b],2k-1)$ to be the pure simplicial complex of dimension $2k-1$ generated by the following facets
	$$\{\{i_1, i_1+1,i_2, i_2+1,\dots, i_k, i_k+1\}: a\leq i_1, i_1+1< i_2, \dots, i_{k-1}+1< i_{k}\leq b-1\},$$ 
	and define $B([a, b], 2k):=B([a, b-1], 2k-1)*b.$ (Thus, $B([a,b],-1)=\{\emptyset\}$ and $B([a,b],0)=\overline{b}$.)
\end{definition}

The following lemma summarizes several properties of the cyclic polytopes. All parts of the lemma follow easily from the Gale evenness condition (see Theorem \ref{thm: Gale}) and Definition \ref{def:B}; see also the proof of \cite[Theorem 2.4]{NZ-neighborly}. 
Recall that if $B$ is a PL $d$-ball, then an interior face of $B$ is any face that is not a face of $\partial B$. A  PL $d$-ball $B$ is called {\em $i$-stacked} (for some $0 \leq i \leq  d$) if all interior faces of $B$ are of dimension $\geq d-i$.  For instance, any simplex is $0$-stacked; $1$-stacked balls are also known in the literature as stacked balls.

\begin{lemma}\label{lm: cyclic polytope properties}
	Let $d\geq 1$ and $n>d+1$. Then 
	\begin{enumerate}
		\item $B([1,n], d)$ is a subcomplex of $\partial C(n, d+1)$.
		In fact, 
		\begin{eqnarray*}
		\partial C(n,2k) &=& B([1,n], 2k-1) \cup \left( \overline{\{1,n\}}*B([2,n-1], 2k-3)\right), \mbox{ and }\\
		\partial C(n,2k-1) &=& \left( 1*B([2,n], 2k-3)\right)  \cup \left( B([1,n-1], 2k-3)*n\right).
	\end{eqnarray*}
	\item Let $F_0=\emptyset, F_1=\{n\}, F_2=\{n-1, n\}, \dots, F_{2k+1}=\{n-2k, n-2k+1, \dots, n\}$. 
Then for $d=2k-1$, $$B([1, n], 2k-1)=\st(F_0)\backslash \left(\st(F_1)\backslash \big( \dots\backslash \big(\st(F_{2k-1})\backslash \st(F_{2k})\big)\dots\big)\right),$$ where the stars are computed in $\partial C(n, 2k)$, while for $d=2k$, $$B([1, n], 2k)=\st(F_1)\backslash  \left(\st(F_2)\backslash \big( \dots\backslash \big(\st(F_{2k})\backslash \st(F_{2k+1})\big)\dots\big)\right),$$ where the stars are computed in $\partial C(n,2k+1)$. In particular, $B([1, n], d)$ is a PL $d$-ball for all $d$.
		
	\item $B([1, n], d)$ is $\lceil d/2\rceil$-neighborly $\lceil d/2\rceil$-stacked.
	
	\item  $\partial B([1,n],d)=\lk(n+1, \partial C(n+1,d+1))=\partial C(n,d)$. In other words, $B([1, n], d)$ is a triangulation of $C(n,d)$. Furthermore, when $d\geq 2$, $C(n+1,d)$ is obtained from $C(n,d)$ by sewing a new vertex $n+1$ onto $C(n,d)$ so that it is placed exactly beyond the facets of  $\overline{\{1,n\}}*B([2,n-1], 2k-3)$ if $d=2k$ and exactly beyond the facets of $B([1,n-1], 2k-3)*n$ if $d=2k-1$.
	\end{enumerate}
\end{lemma}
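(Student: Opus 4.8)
The plan is to verify each of the four parts of Lemma~\ref{lm: cyclic polytope properties} directly from the Gale evenness condition (Theorem~\ref{thm: Gale}) and Definition~\ref{def:B}, proceeding by induction on $k$ where needed. First I would unpack the notation: a facet of $B([1,n],2k-1)$ is a set $\{i_1,i_1+1,i_2,i_2+1,\dots,i_k,i_k+1\}$, i.e.\ $k$ consecutive pairs with strictly increasing, non-adjacent starting points, all inside $[1,n]$. Such a set satisfies Gale's evenness condition inside $[n]$ as a $2k$-set, so it is a facet of $\partial C(n,2k)$; this is the even half of part (1). For the displayed decompositions of $\partial C(n,2k)$ and $\partial C(n,2k-1)$, I would classify the $2k$-subsets (resp.\ $(2k-1)$-subsets) of $[n]$ satisfying the evenness condition according to whether they contain $1$, contain $n$, both, or neither, and match the resulting four (resp.\ two) families against $B([1,n],2k-1)$, $\overline{\{1,n\}}*B([2,n-1],2k-3)$, $1*B([2,n],2k-3)$, and $B([1,n-1],2k-3)*n$. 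The key combinatorial observation is that a set not containing $n$ (resp.\ $1$) must, by evenness, end (resp.\ begin) in a consecutive pair, peeling off to a complex of the same shape on a smaller interval; this also drives the inductive step for the odd case via $B([a,b],2k):=B([a,b-1],2k-1)*b$.

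Next, for part (2), I would compute the iterated star-difference $\st(F_0)\backslash(\st(F_1)\backslash(\cdots))$ explicitly. The facets of $\st(F_i,\partial C(n,d+1))$ are precisely the facets of $\partial C(n,d+1)$ containing $\{n-i+1,\dots,n\}$, which by evenness are easy to enumerate. The alternating inclusion-exclusion of these stars should telescope to exactly the facets that contain none of $n, n-1, \dots$ in a ``bad'' way --- concretely, to the facets lying in $B([1,n],d)$. I would do the $d=2k-1$ case (stars in $\partial C(n,2k)$) and then deduce the $d=2k$ case either by the analogous direct computation in $\partial C(n,2k+1)$ or by coning, using $B([1,n],2k)=B([1,n-1],2k-1)*n$ together with the first decomposition in part~(1). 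Once $B([1,n],d)$ is exhibited as such a star-difference subcomplex of a polytope boundary, the fact that it is a PL $d$-ball follows from the sewing lemma quoted from \cite[Lemma 4.4]{Shemer}, since that lemma guarantees a point beyond exactly those facets, and removing them from a PL $(d+1)$-sphere and coning the boundary yields a PL $d$-ball.

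For part (3), $\lceil d/2\rceil$-neighborliness of $B([1,n],d)$ follows because every $\lceil d/2\rceil$-subset of $[n]$ can be completed (by inserting the missing partners of unpaired elements, or adjusting to consecutive pairs) to a facet of the form prescribed in Definition~\ref{def:B} --- this is just the standard argument that $\partial C(n,d)$ is $\lfloor d/2\rfloor$-neighborly, adapted to the ball. For $\lceil d/2\rceil$-stackedness I must show every interior face has dimension $\geq d-\lceil d/2\rceil=\lfloor d/2\rfloor$; equivalently, every face of dimension $<\lfloor d/2\rfloor$ lies on the boundary $\partial B([1,n],d)$. Using part~(4)'s identification $\partial B([1,n],d)=\partial C(n,d)$, a small face is a face of $\partial C(n,d)$ by neighborliness, hence a boundary face. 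Part (4) itself: $\partial B([1,n],d)=\lk(n+1,\partial C(n+1,d+1))$ because the link of the last vertex of a cyclic polytope is, by Gale's condition, exactly the complex $B([1,n],d)$ shifted --- and the link of a vertex in a simplicial sphere is a sphere, namely $\partial C(n,d)$ by a relabeling argument. The ``furthermore'' clause restates part~(1)'s decomposition in the language of sewing: the facets of $\partial C(n,d)$ destroyed when passing to $\partial C(n+1,d)$ are exactly those in $\overline{\{1,n\}}*B([2,n-1],2k-3)$ (resp.\ $B([1,n-1],2k-3)*n$), which is what ``placed exactly beyond'' means.

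The main obstacle will be part~(2): carefully bookkeeping the iterated (non-associative-looking but in fact well-structured) star-difference and checking that the alternating sums of star-facets telescope exactly onto the facet set of $B([1,n],d)$. I expect this to require a careful induction on the flag length, tracking which facets of $\partial C(n,d+1)$ contain a given suffix $\{n-i+1,\dots,n\}$; everything else reduces fairly mechanically to Theorem~\ref{thm: Gale} and the recursion built into Definition~\ref{def:B}.
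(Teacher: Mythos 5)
Your proposal is correct and follows exactly the route the paper intends: the paper gives no written proof of this lemma beyond asserting that all parts "follow easily from the Gale evenness condition and Definition \ref{def:B}," and your elaboration — classifying Gale facets by their behavior at the endpoints $1$ and $n$ to get the decompositions, telescoping the nested star-differences over the suffix flag $F_i=\{n-i+1,\dots,n\}$, invoking Shemer's Lemma 4.4 for the PL-ball conclusion, and reducing stackedness to neighborliness of $\partial C(n,d)$ via the boundary identification in part (4) — is precisely the intended argument and checks out.
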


We are now in a position to discuss a generalization of $B([1, n], 2k-1)$. By an interval in $[n]$ of size $i$ we mean a subset of $[n]$ consisting of $i$ consecutive integers. For $J=(j_1, \dots, j_m)$, let $\|J\|:=\sum_{k=1}^m j_k$.
\begin{definition}
	Let $J=(j_1, \dots, j_m)$ where each $j_i\geq 2$ and $\|J\|=d+1$. For $n>d$, define $\Gamma^J_n$ as the $d$-dimensional complex generated by all facets of the form $I=I_1\cup I_2\cup \dots \cup I_m$, where $I_1,\ldots,I_m$ are pairwise disjoint intervals in $[n]$ of sizes $j_1,\ldots,j_m$, respectively, and each $I_i$ lies to the left of $I_{i+1}$. 
\end{definition}

\noindent For instance, if $J$ is a $k$-tuple $(2,2,\ldots,2)$, then $\Gamma^J_n$ is  $B([1, n], 2k-1)$. We will see that several properties of $B([1,n],2k-1)$, such as neighborliness and stackedness, continue to hold in the generality of $\Gamma^J_n$. The proof will rely on the following standard lemma (see \cite[Lemma 2.2]{NZ-cs-neighborly}).
\begin{lemma}\label{lm: PL union}
	Let $B_1$ and $B_2$ be $m$-stacked PL $d$-balls. If $B_1\cap B_2 \subseteq \partial B_1\cap \partial B_2  \subsetneq \partial B_1$ is an $(m-1)$-stacked PL $(d-1)$-ball, then $B_1\cup B_2$ is also an $m$-stacked PL $d$-ball. Further, if $B$ is a $p$-stacked PL ball with $V(B_1)\cap V(B)=\emptyset$, then $B_1*B$ is an $(m+p)$-stacked PL ball.
\end{lemma}

\begin{lemma} \label{lem:m-neighb-m-stacked}
	Let $J=(j_1, \dots, j_m)$ with $\|J\|=d+1$ and all $j_i\geq 2$. Then for all $n > d$, $\Gamma^J_n$ is an $m$-neighborly $m$-stacked PL $d$-ball.
\end{lemma}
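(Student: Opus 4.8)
The plan is to induct on $m$, the length of the tuple $J$. For the base case $m=1$, we have $J=(d+1)$ and $\Gamma^J_n$ is generated by all intervals of size $d+1$ in $[n]$; this is the well-known stacked $d$-ball on a path, which is a $1$-neighborly $1$-stacked PL $d$-ball (its only interior faces are the $(d-1)$-faces $\{i+1,\dots,i+d\}$ shared by two consecutive interval-facets, so all interior faces have dimension $\geq d-1$). For the inductive step, I would peel off the last block: write $J=(j_1,\dots,j_m)$ and $J'=(j_1,\dots,j_{m-1})$, so that $j_1+\dots+j_{m-1}=d+1-j_m=:d'+1$. The idea is to express $\Gamma^J_n$ as a union of subcomplexes, each of which is a join of a copy of $\Gamma^{J'}_{[a,b]}$ (a lower-dimensional instance, shifted to an interval) with a path-of-intervals complex accounting for the last block $I_\ell$ of size $j_m$, and then to glue these pieces together using Lemma~\ref{lm: PL union}.

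More concretely, I would stratify the facets $I=I_1\cup\dots\cup I_\ell$ of $\Gamma^J_n$ by the position of the last interval $I_\ell$ (which has size $j_m$). Grouping by the right endpoint $r$ of $I_\ell$: the facets whose last block ends at exactly $r$ are precisely $I'\cup\{r-j_m+1,\dots,r\}$ where $I'$ ranges over facets of $\Gamma^{J'}_{[1,\,r-j_m]}$; this collection is $\Gamma^{J'}_{[1,\,r-j_m]}*\overline{\{r-j_m+1,\dots,r\}}$, a join of an $(m-1)$-stacked PL $d'$-ball (by induction) with a $(j_m-1)$-simplex, hence an $(m-1)$-stacked PL $d$-ball. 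I would then build $\Gamma^J_n$ by adding these pieces one $r$ at a time, for $r=d'+j_m, d'+j_m+1,\dots,n$. Let $B_{\leq r}$ denote the union of the pieces for right-endpoints $d'+j_m,\dots,r$. The key geometric claim is that when we attach the piece for a new value $r+1$, the intersection with $B_{\leq r}$ is exactly $\Gamma^{J'}_{[1,\,r-j_m+1]}*\overline{\{r-j_m+2,\dots,r\}}$ — the faces where the last block has slid right by one but not yet reached $r+1$ — which is an $(m-2)$-stacked PL $(d-1)$-ball (again by induction, joined with a simplex), sitting inside the boundaries of both balls. Lemma~\ref{lm: PL union} with $(m, d)$ in place of $(m,d)$ then yields that $B_{\leq r+1}$ is $m$-stacked, and iterating to $r+1=n$ gives that $\Gamma^J_n=B_{\leq n}$ is an $m$-stacked PL $d$-ball. (When $m=1$, reading $(m-1)$-stacked as $0$-stacked, i.e.\ a simplex, the induction degenerates to the base case.)

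It then remains to check $m$-neighborliness, which I would do directly from the definition rather than from the ball structure. Given any $m$-subset $S=\{s_1<\dots<s_m\}\subseteq[n]$, I must exhibit a facet of $\Gamma^J_n$ containing $S$. The natural choice is to place the $m$ blocks of sizes $j_1,\dots,j_m$ so that the $i$-th block contains $s_i$ and the blocks fit left-to-right without overlap: since $\sum j_i=d+1\leq n$ there is enough room, and one can always anchor block $i$ to start at $s_i$ (or push it left enough to clear block $i-1$). A short case analysis on whether consecutive $s_i$'s are far apart or crowded together shows the blocks can be chosen pairwise disjoint and in increasing order, covering all of $S$; this is the required facet. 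Since $m$-stackedness forces the interior faces to have dimension $\geq d-m$, and in particular no interior faces of dimension $<d-m$, the complex cannot be $(m+1)$-neighborly in general, but $m$-neighborliness is exactly what is claimed.

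The main obstacle I anticipate is verifying the intersection claim in the gluing step — namely that attaching the piece for right-endpoint $r+1$ meets the previously built ball $B_{\leq r}$ in precisely the predicted $(m-1)$-stacked PL $(d-1)$-ball lying in the common boundary, with no extra lower-dimensional faces and no faces in the interior of either ball. This requires carefully identifying which facets of the new piece share which faces with earlier pieces, i.e.\ understanding the boundary complex $\partial\big(\Gamma^{J'}_{[1,c]}*\overline{B}\big)$ well enough to see that the overlap is exactly $\Gamma^{J'}_{[1,c+1]}*\overline{B'}$ with $B'$ one element smaller; this in turn leans on knowing $\partial\Gamma^{J'}_{[1,c]}$ inductively (a statement analogous to part~4 of Lemma~\ref{lm: cyclic polytope properties}). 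Everything else — the base case, the join-preserves-stackedness observation, and the neighborliness count — is routine.
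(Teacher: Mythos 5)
Your proposal is correct and follows essentially the same route as the paper: your iterative build-up of $B_{\leq r}$ by the right endpoint of the last block is precisely the paper's induction on $n$ via the decomposition $\Gamma^J_{n+1}=\Gamma^J_n\cup\bigl(\overline{\{n-j_m+2,\dots,n+1\}}*\Gamma^{\hat{J}}_{n-j_m+1}\bigr)$, with the same intersection identified and the same appeal to Lemma~\ref{lm: PL union} (and the paper, like you, states rather than verifies the intersection claim in detail). One minor slip: the intersection is an $(m-1)$-stacked, not $(m-2)$-stacked, $(d-1)$-ball --- the join with a simplex preserves the stackedness parameter of $\Gamma^{J'}$ --- but this is exactly what Lemma~\ref{lm: PL union} requires, so the argument is unaffected.
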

\begin{proof}
	By definition, $\Gamma^J_n$ is $m$-neighborly. We prove by induction on both $m$ and the number of vertices $n$ that $\Gamma^J_n$ is an $m$-stacked PL $d$-ball.
	
	First, when $m=1$, the collection of facets is given by $\big\{\{i, i+1, \dots, i+d-1, i+d\} :1\leq i\leq n-d\big\}$, and hence $\Gamma^J_n$ is a stacked ball. Similarly, when $n=d+1$, $\Gamma^J_n$ is the $d$-simplex, and hence the claim also holds.
	
	Let $\hat{J}=(j_1, j_2, \dots, j_{m-1})$. Then $\Gamma^J_{n+1}=\Gamma^J_n\cup \left( \overline{\{n-j_m+2, \dots, n+1\}}*\Gamma^{\hat{J}}_{n-j_m+1}\right) $. Note that by induction on $n$, $\Gamma^J_n$ is an $m$-stacked PL $d$-ball, and by induction on $m$, $\overline{\{n-j_m+2, \dots, n+1\}}*\Gamma^{\hat{J}}_{n-j_m+1}$ is an $(m-1)$-stacked PL $d$-ball. Furthermore,  by definitions (and since $n+1$ is not a vertex of $\Gamma^J_n$),
	$$\Gamma^J_n\cap \left( \overline{\{n-j_m+2, \dots, n+1\}}*\Gamma^{\hat{J}}_{n-j_m+1}\right)=\overline{\{n-j_m+2, \dots, n\}}*\Gamma^{\hat{J}}_{n-j_m+1}.$$
	 By Lemma \ref{lm: PL union}, this join is an $(m-1)$-stacked PL $(d-1)$-ball. Since it is contained in $\partial\Gamma^J_{n}$ and also in $\partial \big(\overline{\{n-j_m+2, \dots, n+1\}}*\Gamma^{\hat{J}}_{n-j_m+1}\big)$, another application of Lemma \ref{lm: PL union} shows that $\Gamma^J_{n+1}$ is an $m$-stacked PL $d$-ball.
\end{proof}

We are ready to define the promised ``sibling'' of the cyclic polytope. We start by defining it as a simplicial sphere, and then show in Lemma \ref{lem:polytopality} that it is indeed the boundary of a polytope.

\begin{definition}
Let $d\geq 4$. Let $J=(2,2,\dots,2,3)$ when $d$ is even and $J=(2,2,\dots,2,4)$ when $d$ is odd, where in both cases $\|J\|=d+1$. For all $n> d$, define $D(n,d-1)$ to be the boundary complex of the PL $d$-ball $\Gamma^J_n$. In particular, $D(n,d-1)$ is a PL $(d-1)$-sphere.
\end{definition}

According to the above definition, the facets of $D(n,d-1)$ are those ridges of $\Gamma^J_n$ that are contained in a unique facet of $\Gamma^J_n$. Using the definition of $\Gamma^J_n$ then easily implies the following lemma, which provides the complete set of facets of $D(n, d-1)$ (cf.~the Gale evenness condition).

\begin{lemma} \label{lem:facets-of-D}
	If $k\geq 2$ and $n\geq 2k+1$, then the set of facets of $D(n, 2k-1)$ is given by  
	\begin{enumerate}
		\item $\tau\cup \{ i_k, i_k+2\}$, where $\tau\in B([1,i_k-1], 2k-3)$ and $i_k\leq n-2$,
		\item $\tau\cup \{n-1,n\}$, where $\tau\in B([1,n-2], 2k-3)$,
		\item $\tau \cup \{1, i_k, i_k+1, i_k+2\}$, where $\tau\in B([2, i_k-1], 2k-5)$ and $i_k\leq n-2$.
	\end{enumerate}
	Similarly, if $k\geq 2$ and $n\geq 2k+2$, then the set of facets of $D(n,2k)$ is given by 
	\begin{enumerate}
		\item $\tau\cup \{ i_k, i_k+1, i_k+3\}$, where $\tau\in B([1,i_k-1], 2k-3)$ and $i_k\leq n-3$,
		\item $\tau\cup \{ i_k, i_k+2, i_k+3\}$, where $\tau\in B([1,i_k-1], 2k-3)$ and $i_k\leq n-3$,
		\item $\tau\cup \{n-2, n-1, n\}$, where $\tau \in B([1, n-3], 2k-3)$,
		\item $\tau \cup \{1, i_k, i_k+1, i_k+2, i_k+3\}$, where $\tau\in B([2, i_k-1], 2k-5)$ and $i_k\leq n-3$.
	\end{enumerate}
\end{lemma}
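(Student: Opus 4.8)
The plan is to derive the facet list of $D(n,d-1)=\partial\Gamma^J_n$ directly from the combinatorial description of $\Gamma^J_n$ and the fact that $\Gamma^J_n$ is a PL $d$-ball, so its boundary faces are exactly the ridges (i.e.\ $(d-1)$-faces) of $\Gamma^J_n$ lying in a unique facet of $\Gamma^J_n$. First I would record, for $J=(2,\dots,2,3)$ (the case $d=2k-1$, so the sum of entries is $2k$), that a facet of $\Gamma^J_n$ is an $I=I_1\cup\dots\cup I_\ell$ where the $I_i$ are pairwise disjoint intervals ordered left to right, with size multiset equal to $\{2,\dots,2,3\}$, i.e.\ one interval of size $3$ and $(k-1)$ intervals of size $2$ (but the order in which sizes appear along $[n]$ is unconstrained). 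A ridge of $\Gamma^J_n$ is a $(d-1)=(2k-2)$-subset obtained from such a facet by deleting one element, and I would go through the possible deletions and count in how many facets of $\Gamma^J_n$ each resulting set lies; a ridge is a boundary face precisely when that count is $1$.

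Next I would organize the case analysis by \emph{which} element of a facet is deleted. Deleting an interior point of an interval of size $3$ splits it into two singletons and cannot be completed back inside the ball except by re-inserting the removed point, so these produce boundary ridges; deleting an endpoint of an interval produces a ridge that can be ``slid'' (re-completed by the neighbouring integer on the other side) unless the slide would run off the end of $[n]$ or collide with an adjacent interval — the boundary ridges are exactly those where a slide is blocked. Carrying this out, the blocked configurations are: (a) the size-$3$ interval has been shrunk at one end so that the only legal completion is putting the removed element back, which gives ridges of the form $\sigma\cup\{i_k,i_k+2\}$ with $\sigma$ a facet of $B([1,i_k-1],2k-3)$ — the ``missing middle'' case; (b) the right end of $[n]$ blocks a slide, giving $\sigma\cup\{n-1,n\}$ with $\sigma$ a facet of $B([1,n-2],2k-3)$; and (c) the left end (the integer $1$) blocks a slide of a size-$2$ interval adjacent to the size-$3$ interval sitting at the very left, giving the ridges $\sigma\cup\{1,i_k,i_k+1,i_k+2\}$ with $\sigma\in B([2,i_k-1],2k-5)$. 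I would then verify that this list is exhaustive by checking that every other deletion yields a ridge with two completions (hence an interior ridge), using the explicit ``slide left/slide right'' moves. The even case $d=2k$, $J=(2,\dots,2,4)$, is entirely analogous: now one interval has size $4$, and deleting its two interior points (in two ways, landing on $i_k+1$ or $i_k+2$ missing) together with the right-end block and the left-end block produce the four families listed; notation like $\{i_k,i_k+1,i_k+3\}$ versus $\{i_k,i_k+2,i_k+3\}$ simply records which of the two interior slots of the size-$4$ block is empty.

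A cleaner alternative, which I would probably use to cross-check rather than as the primary argument, is the recursive decomposition already exploited in Lemma~\ref{lem:m-neighb-m-stacked}: write $\Gamma^J_{n+1}=\Gamma^J_n\cup(\overline{\{n-j_m+2,\dots,n+1\}}*\Gamma^{\hat J}_{n-j_m+1})$ with $\hat J=(2,\dots,2)$ of length $k-1$, take boundaries, and use that for a union of two PL balls glued along a boundary ball the boundary of the union is the symmetric-difference-type complex $(\partial B_1\setminus(B_1\cap B_2))\cup(\partial B_2\setminus(B_1\cap B_2))$. Feeding in $\partial\Gamma^{\hat J}_{m}=\partial C(m,2k-2)$ and the explicit description of $B([1,\cdot],2k-3)$ from Lemma~\ref{lm: cyclic polytope properties}, one can induct on $n$ to produce exactly the three (resp.\ four) families; the base case $n=d+1$ is the boundary of the $d$-simplex, which one checks matches the stated list. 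The main obstacle is purely bookkeeping: making sure the case split on deletions is genuinely complete and that the index ranges (e.g.\ which $i_k$ are allowed, and that $\tau$ really ranges over \emph{all} facets of the stated $B$-complex and not a proper subset) come out exactly right, in particular handling the small-$n$ and small-$k$ edge cases so that the formulas for $B([a,b],2k-3)$ and $B([a,b],2k-5)$ are not vacuous or degenerate. No deep idea is needed beyond Lemma~\ref{lm: PL union} and careful enumeration.
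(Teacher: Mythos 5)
Your overall strategy is the right one, and it is essentially what the paper leaves implicit (the paper gives no proof, asserting the lemma is ``an easy consequence of the definition''): since $\Gamma^J_n$ is a PL $d$-ball, its boundary facets are exactly the ridges lying in a unique facet, and one classifies ridges by which element of a facet is deleted and determines when all alternative completions are blocked. However, your first step contains a genuine error: you assert that ``the order in which sizes appear along $[n]$ is unconstrained.'' It is not. In the definition of $\Gamma^J_n$ the $t$-th interval from the left has size $j_t$, so for $J=(2,\dots,2,3)$ the size-$3$ interval is always the \emph{rightmost} one (and likewise for the size-$4$ interval when $J=(2,\dots,2,4)$). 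This reading is forced by consistency with Lemma \ref{lem:sewing}, whose decomposition $\Gamma^J_{n+1}=\Gamma^J_n\cup\bigl(\overline{\{n-j_m+2,\dots,n+1\}}*\Gamma^{\hat J}_{n-j_m+1}\bigr)$ only makes sense if the last interval has size $j_m$, and with the explicit facet list of $D(n,3)$ in Remark \ref{rm:D(n,d)-vs-positive}. It also matters for the conclusion: under the unconstrained reading, already for $k=2$ the set $\{j,j+1,j+2\}\cup\{i,i+1\}$ with $j+2<i$ would be a facet of the complex you start from, and deleting $j+1$ would yield the ridge $\{j,j+2,i,i+1\}$, whose only completion is re-inserting $j+1$; this would be a boundary facet belonging to none of the three families of the lemma. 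So the complex you analyze is strictly larger than $\Gamma^J_n$ and has a different boundary, and your stated conclusion matches the lemma only because your case analysis tacitly places the large interval last. Fix the premise and the blocked-slide analysis does line up with the statement.

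Two smaller points. First, the dimension bookkeeping is off throughout: for $D(n,2k-1)$ the ball $\Gamma^J_n$ has dimension $d=2k$, so the entries of $J$ sum to $2k+1$ (you write $2k$, yet then correctly describe $k-1$ pairs plus one triple, which have $2k+1$ elements), a ridge has $2k$ elements rather than $2k-2$, and $J=(2,\dots,2,4)$ corresponds to the odd ball dimension $d=2k+1$, not to ``$d=2k$.'' Second, the uniqueness check is subtler than ``re-complete by the neighbouring integer on the other side,'' because the intervals $I_1,\dots,I_m$ are allowed to abut: adding an element at the end of a long run of consecutive integers can force a repartition of the entire run. For instance, in family 2 with $\tau=\tau'\cup\{n-3,n-2\}$ the boundary ridge $\tau'\cup\{n-3,n-2,n-1,n\}$ is completed only by adding $n-4$ and repartitioning the resulting $5$-run as $\{n-4,n-3\}\cup\{n-2,n-1,n\}$; a purely local slide test applied to the presentation $\tau\cup\{n-1,n\}$ misses this. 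The uniqueness verification must therefore range over all partitions of each candidate facet into intervals of sizes $j_1,\dots,j_m$. Given this, your alternative recursive route via Lemma \ref{lem:sewing} and the base case $n=d+1$ is probably the cleaner way to pin down the index ranges.
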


The following result further emphasizes similarities between $D(n,d-1)$ and $\partial C(n,d)$. It is an immediate consequence of Lemma \ref{lem:facets-of-D} (along with Lemma \ref{lm: cyclic polytope properties}).

\begin{lemma}\label{lm: sibling properties} For all $k\geq 2$,
	\begin{eqnarray*}
	\lk(\{n-1, n\}, D(n, 2k-1))&=&\partial C(n-2,2k-2), \mbox{ and}\\
	\lk(\{n-2, n-1, n\}, D(n,2k))&=&\partial C(n-3,2k-2).
	\end{eqnarray*}
\end{lemma}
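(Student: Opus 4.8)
The goal is to identify the links of the two ``top'' faces $\{n-1,n\}$ in $D(n,2k-1)$ and $\{n-2,n-1,n\}$ in $D(n,2k)$ with the boundary complexes of lower-dimensional cyclic polytopes. The natural strategy is to read off the link directly from the explicit facet list supplied by Lemma~\ref{lem:facets-of-D} and then recognize the resulting complex via the decomposition of $\partial C(m,2k-2)$ given in Lemma~\ref{lm: cyclic polytope properties}(1). For the odd case, I would first observe that the only facets of $D(n,2k-1)$ containing both $n-1$ and $n$ are those of type~(2), namely $\tau\cup\{n-1,n\}$ with $\tau\in B([1,n-2],2k-3)$. (One should check that facets of type~(1) use the pair $\{i_k,i_k+2\}$ with $i_k\le n-2$, so they cannot contain $n-1$ together with $n$ unless $i_k=n-2$, giving $\{n-2,n\}$, not the pair $\{n-1,n\}$; and type~(3) facets involve $\{i_k,i_k+1,i_k+2\}$, again not containing the pair $\{n-1,n\}$ for the same reason, except possibly as $\{n-2,n-1,n\}$ which does contain $\{n-1,n\}$ — one must include the link contributions of those as well, so the link is $B([1,n-2],2k-3)$ together with whatever type-(1) and type-(3) facets contain $\{n-1,n\}$ restricted appropriately.) Carefully, the link of $\{n-1,n\}$ is generated by: all $\tau\in B([1,n-2],2k-3)$ from type~(2); the single extra facet $\{n-3,n-2\}$-type contribution from the type-(1) facet $B([1,n-3],2k-3)\cup\{n-2,n\}$ — wait, this does not contain $n-1$. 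Let me restructure: one lists exactly which facets $F$ of $D(n,2k-1)$ satisfy $\{n-1,n\}\subseteq F$, then $\lk(\{n-1,n\})=\{F\setminus\{n-1,n\}: F\ni n-1,n\}$.

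\textbf{Key steps, in order.} (1) From Lemma~\ref{lem:facets-of-D}, enumerate the facets of $D(n,2k-1)$ that contain $\{n-1,n\}$. Type~(1) facets $\tau\cup\{i_k,i_k+2\}$ with $\tau\in B([1,i_k-1],2k-3)$ contain $n-1$ and $n$ only if $\{i_k,i_k+2\}\cup\tau \supseteq\{n-1,n\}$; since $i_k+2\le n$ forces $i_k\le n-2$, the largest pair is $\{n-2,n\}$, which does not contain $n-1$, so $n-1\in\tau$, forcing $i_k=n-1$ impossible, or $\{i_k,i_k+2\}=\{n-1,n+1\}$ impossible. Hence no type-(1) facet works except via $\tau$; but $\tau\in B([1,i_k-1],2k-3)$ and $n-1<i_k$ would be needed — impossible. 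So \emph{no} type-(1) facets. Similarly type-(3): $\tau\cup\{1,i_k,i_k+1,i_k+2\}$; to contain $\{n-1,n\}$ with $i_k+2\le n$ we'd need $i_k+1=n-1,i_k+2=n$, i.e.\ $i_k=n-2$, giving $\{1,n-2,n-1,n\}\cup\tau$ with $\tau\in B([2,n-3],2k-5)$; these \emph{do} contain $\{n-1,n\}$. And type-(2) gives exactly $\tau\cup\{n-1,n\}$, $\tau\in B([1,n-2],2k-3)$. (2) Conclude $\lk(\{n-1,n\},D(n,2k-1))$ is generated by $B([1,n-2],2k-3)$ together with $\{1,n-2\}*B([2,n-3],2k-5)$. (3) Recognize this union as $\partial C(n-2,2k-2)$ using Lemma~\ref{lm: cyclic polytope properties}(1), which states $\partial C(m,2k-2)=B([1,m],2k-3)\cup\big(\overline{\{1,m\}}*B([2,m-1],2k-5)\big)$; set $m=n-2$. (4) Repeat the analogous bookkeeping for $D(n,2k)$ and $\{n-2,n-1,n\}$, using the four facet types in Lemma~\ref{lem:facets-of-D}: the link should come out to $B([1,n-3],2k-3)\cup\big(\overline{\{1,n-3\}}*B([2,n-4],2k-5)\big)=\partial C(n-3,2k-2)$, again by Lemma~\ref{lm: cyclic polytope properties}(1) with $m=n-3$.

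\textbf{Main obstacle.} The only real work is the careful case analysis in step~(1) (and its even analogue): one must verify that the only facets of $D(n,2k-1)$ containing the specific pair $\{n-1,n\}$ are precisely the type-(2) facets and the relevant $i_k=n-2$ specializations of type-(3), and that no type-(1) facet contributes; then one must confirm that removing $\{n-1,n\}$ from these yields exactly the generating facets of $\partial C(n-2,2k-2)$ in the decomposition of Lemma~\ref{lm: cyclic polytope properties}(1) — in particular that the ``$\tau\cup\{1\}$'' pieces match the $\overline{\{1,m\}}$-cone term and the plain $B([1,n-2],2k-3)$ piece matches the other term, with the $\{1,m\}$-cone structure on $B([2,m-1],2k-5)$ coming out correctly (here $m-1=n-3$, and the type-(3) facets give $\{1\}*\{n-2\}*B([2,n-3],2k-5)$, so one identifies $n-2$ with the ``$m$'' endpoint of the cone $\overline{\{1,m\}}$). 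Since the paper explicitly says this lemma ``is an immediate consequence of Lemma~\ref{lem:facets-of-D} (along with Lemma~\ref{lm: cyclic polytope properties})'', I expect the write-up to be short: state the two facet computations and invoke the cyclic polytope decomposition. No deeper idea is needed beyond disciplined index-chasing.
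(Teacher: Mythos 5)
Your proposal is correct and follows exactly the route the paper intends: the paper gives no written proof, stating only that the lemma is an immediate consequence of Lemma \ref{lem:facets-of-D} together with Lemma \ref{lm: cyclic polytope properties}, and your case analysis (only type-(2) facets and the $i_k=n-2$, resp.\ $i_k=n-3$, specializations of the last facet type contain the given face, yielding $B([1,n-2],2k-3)\cup(\overline{\{1,n-2\}}*B([2,n-3],2k-5))=\partial C(n-2,2k-2)$ and its even analogue) is precisely that computation. The index-chasing in your key steps is accurate; only the exploratory digressions in your plan paragraph should be cleaned up in a final write-up.
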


Note that for any $d$, $D(d+1,d-1)$ is a $(d-1)$-sphere with $d+1$ vertices; hence it is the boundary of the $d$-simplex. We are now ready to show that, similarly to the cyclic polytope, $D(n+1,d-1)$ is obtained from $D(n,d-1)$ by sewing.   

\begin{lemma} \label{lem:sewing}
	For $d\geq 4$ and $n> d$, define $$K(n,d-1)=\begin{cases}
		\overline{\{n-1,n\}}*B([1,n-2], 2k-3) & \text{ if }d=2k\\
		\overline{\{n-2, n-1, n\}}* B([1, n-3], 2k-3) & \text{ if }d=2k+1.		
	\end{cases}$$ Then $D(n+1, d-1)$ is the complex obtained from $D(n, d-1)$ by replacing $K(n,d-1)$ with $\partial K(n,d-1)*(n+1)$.
\end{lemma}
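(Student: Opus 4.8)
The statement says that $D(n+1,d-1)=\partial\Gamma^J_{n+1}$ is obtained from $D(n,d-1)=\partial\Gamma^J_n$ by replacing $K(n,d-1)$ with $\partial K(n,d-1)*(n+1)$, i.e.\ by a stellar-type operation on the subcomplex $K(n,d-1)$. The natural route is to compare the two balls $\Gamma^J_{n+1}$ and $\Gamma^J_n$ directly, using the decomposition already exhibited in the proof of Lemma~\ref{lem:m-neighb-m-stacked}, namely
\[
\Gamma^J_{n+1}=\Gamma^J_n\;\cup\;\Big(\overline{\{n-j_m+2,\dots,n+1\}}*\Gamma^{\hat J}_{n-j_m+1}\Big),
\]
where $\hat J=(2,\dots,2)$ is a $(k-1)$-tuple (since $J=(2,\dots,2,3)$ for $d=2k$ and $J=(2,\dots,2,4)$ for $d=2k+1$), so that $\Gamma^{\hat J}_{n-j_m+1}=B([1,n-j_m+1],2k-3)$ by the remark following the definition of $\Gamma^J_n$. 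Set $L:=\overline{\{n-j_m+2,\dots,n+1\}}*B([1,n-j_m+1],2k-3)$, the ``new'' piece, and note $j_m=3$ when $d=2k$ and $j_m=4$ when $d=2k+1$. The first step is to record that $\Gamma^J_n\cap L=\overline{\{n-j_m+2,\dots,n\}}*B([1,n-j_m+1],2k-3)$, which is exactly $K(n,d-1)$ after identifying the index ranges: for $d=2k$ the apex simplex is $\overline{\{n-1,n\}}$ and the base is $B([1,n-2],2k-3)$; for $d=2k+1$ the apex is $\overline{\{n-2,n-1,n\}}$ and the base is $B([1,n-3],2k-3)$. Thus $K(n,d-1)=\Gamma^J_n\cap L$ is a PL $(d-1)$-ball lying in $\partial\Gamma^J_n$.

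\textbf{Key steps.} (1) Identify the facets of $D(n,d-1)=\partial\Gamma^J_n$ that are \emph{not} facets of $D(n+1,d-1)$: these are precisely the facets of $\partial\Gamma^J_n$ that meet the interior of $L$, which I claim are exactly the facets of $K(n,d-1)$. Concretely, $\partial\Gamma^J_n$ splits as $K(n,d-1)\cup\big(\partial\Gamma^J_n\setminus K(n,d-1)\big)$; the facets of $K(n,d-1)$ get ``covered'' when $L$ is glued on, while the remaining facets of $\partial\Gamma^J_n$ survive unchanged into $\partial\Gamma^J_{n+1}$. (2) Identify the new facets of $D(n+1,d-1)$: these are the boundary facets of $L$ that do \emph{not} lie on $\Gamma^J_n$, i.e.\ $\partial L\setminus K(n,d-1)$. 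Since $L=\overline{A}*B$ with $A=\{n-j_m+2,\dots,n+1\}$ a $(j_m-1)$-simplex and $B=B([1,n-j_m+1],2k-3)$ a PL $(2k-3)$-ball, we have $\partial L=(\partial\overline{A}*B)\cup(\overline{A}*\partial B)$, and the portion of $\partial L$ glued to $\Gamma^J_n$ is $\overline{A'}*B$ with $A'=A\setminus\{n+1\}$; hence the new boundary facets are $(\partial\overline{A}\setminus\st(n+1,\partial\overline A))*B$ together with $\overline{A}*\partial B$ minus overlaps — but a cleaner way is simply to observe $\partial L\setminus\Gamma^J_n=\partial K(n,d-1)*(n+1)$, since $\partial K(n,d-1)*(n+1)$ is exactly the part of $\partial L$ visible from the new vertex $n+1$. (3) Conclude: $D(n+1,d-1)=\big(D(n,d-1)\setminus K(n,d-1)\big)\cup\big(\partial K(n,d-1)*(n+1)\big)$, which is the asserted replacement. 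The cross-check against Lemma~\ref{lem:facets-of-D} is worthwhile — the facets in item (2) of that lemma ($\tau\cup\{n-1,n\}$ with $\tau\in B([1,n-2],2k-3)$, for $d=2k-1$, and analogously for $d=2k$) are precisely the facets of $K(n,d-1)$ that appear newly for $D(n,\cdot)$ versus $D(n-1,\cdot)$, confirming the pattern.

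\textbf{Main obstacle.} The substantive point, and the one I expect to require the most care, is verifying that the gluing $\Gamma^J_{n+1}=\Gamma^J_n\cup L$ really does realize the operation ``replace $K(n,d-1)$ by $\partial K(n,d-1)*(n+1)$'' on the \emph{boundary} complex — that is, controlling exactly which facets of $\partial\Gamma^J_n$ disappear and which boundary facets of $L$ are new. This amounts to checking that $\mathrm{int}(L)\cap\partial\Gamma^J_n$ (in the sense of boundary faces of $\Gamma^J_n$ covered by $L$) equals precisely $K(n,d-1)$, with no extra faces covered and none missed; since $L$ is a cone over a ball with apex $\{n+1\}$ and base $K(n,d-1)$ sits in $\partial\Gamma^J_n$, this is a general fact about coning (gluing a cone $\overline{v}*K$ onto a ball along the subball $K\subset\partial$), but one must confirm the hypotheses: that $K(n,d-1)$ is an induced PL $(d-1)$-ball in $\partial\Gamma^J_n$ (which follows from Lemmas~\ref{lm: cyclic polytope properties} and \ref{lem:m-neighb-m-stacked}, as $B([1,\cdot],2k-3)$ is a PL ball and coning with a simplex preserves this), and that $L$ meets $\Gamma^J_n$ in exactly $K(n,d-1)$ and nothing more — this last equality is the computation $\Gamma^J_n\cap L=\overline{A'}*B$ already carried out in the proof of Lemma~\ref{lem:m-neighb-m-stacked}, so in fact the heavy lifting is done and the present lemma is a matter of reading off the consequence for boundary complexes and matching notation. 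A Pachner-flip interpretation is an optional sanity check: when $\partial K(n,d-1)*(n+1)$ and $K(n,d-1)$ together form $\overline{B'}*\partial\overline{A''}$-type patterns this is literally a bistellar flip, but in general $K(n,d-1)$ has more than one facet, so the operation is a composite ``local retriangulation'' rather than a single flip, consistent with the paper's framing in Section~5.1.
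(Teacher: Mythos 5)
Your proposal is correct and follows essentially the same route as the paper: both use the decomposition $\Gamma^J_{n+1}=\Gamma^J_n\cup\bigl(K(n,d-1)*(n+1)\bigr)$ from the proof of Lemma~\ref{lem:m-neighb-m-stacked}, observe that the two balls intersect exactly in $K(n,d-1)$, and read off the claim by comparing boundary complexes. The extra care you devote to identifying which boundary facets disappear and which appear is exactly the ``computing the boundary complexes'' step the paper leaves implicit.
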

\begin{proof}
Assume $d=2k$ and let $J=(2,\ldots,2,3)$ where  $\|J\|=2k+1$. As we saw in the proof of Lemma \ref{lem:m-neighb-m-stacked}, 
\begin{equation} \label{eq:union-of-balls}
\Gamma^J_{n+1}=\Gamma^J_n\cup\left( \overline{\{n-1, n, n+1\}}*B([1,n-2], 2k-3)\right)=\Gamma^J_n\cup \left( K(n, 2k-1)*(n+1)\right)\end{equation} while
$\Gamma^J_n \cap \left(K(n, 2k-1)*(n+1) \right) = K(n, 2k-1)$.
	The claim follows by computing the boundary complexes of the balls on the left- and right-hand sides of \eqref{eq:union-of-balls}. The proof in the case of $d=2k+1$ is similar.
\end{proof}

\begin{lemma} \label{lem:polytopality}
	The complex $D(n,d-1)$ is the boundary complex of a simplicial polytope.
\end{lemma}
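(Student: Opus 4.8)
\textbf{Proof proposal for Lemma~\ref{lem:polytopality}.}

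The plan is to argue by induction on $n$, mimicking the way one proves that $C(n,d)$ is a polytope via iterated sewing. The base case $n=d+1$ is clear: $D(d+1,d-1)=\partial\overline{[d+1]}$ is the boundary of the $d$-simplex, which is certainly polytopal. For the inductive step, suppose $D(n,d-1)=\partial P$ for some simplicial $d$-polytope $P$ with vertices labeled $1,\dots,n$ in the combinatorial pattern prescribed by the $\Gamma^J_n$ construction. By Lemma~\ref{lem:sewing}, $D(n+1,d-1)$ is obtained from $D(n,d-1)$ by deleting the subcomplex $K(n,d-1)$ and inserting the cone $\partial K(n,d-1)*(n+1)$. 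By Shemer's sewing lemma (\cite[Lemma~4.4]{Shemer}, recalled in Section~4), this is exactly the combinatorial effect of adding a new vertex $p$ to $P$ placed \emph{exactly beyond} the facets of $K(n,d-1)$ --- \emph{provided} $K(n,d-1)$ is of the form
$$\st(F_1,\partial P)\backslash\Big(\st(F_2,\partial P)\backslash\big(\cdots\backslash\big(\st(F_{\ell-1},\partial P)\backslash\st(F_\ell,\partial P)\big)\cdots\big)\Big)$$
for some flag of faces $F_1\subsetneq F_2\subsetneq\cdots\subsetneq F_\ell$ of $P$. So the crux is to exhibit such a flag. Once this is done, $\conv(P\cup p)$ is a simplicial $d$-polytope whose boundary complex is $D(n+1,d-1)$, completing the induction.

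The main work, then, is identifying the flag. This is precisely the $D$-analogue of part~2 of Lemma~\ref{lm: cyclic polytope properties}, where for $\partial C(n,2k)$ one uses $F_i=\{n-i+1,\dots,n\}$. Here I expect to take, when $d=2k$, the flag
$$F_0=\emptyset\subsetneq F_1=\{n\}\subsetneq F_2=\{n-1,n\}\subsetneq F_3=\{n-2,n-1,n\}\subsetneq\cdots\subsetneq F_{2k}=\{n-2k+1,\dots,n\}$$
inside $\partial P=D(n,2k-1)$, and check that the alternating star-difference it generates equals $K(n,2k-1)=\overline{\{n-1,n\}}*B([1,n-2],2k-3)$; the case $d=2k+1$ is analogous with an appropriately longer flag starting at $\{n-2,n-1,n\}$ (matching the extra entry $4$ versus $3$ in $J$). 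Verifying this is a direct, if slightly tedious, computation using the explicit facet list of Lemma~\ref{lem:facets-of-D} together with Lemma~\ref{lm: cyclic polytope properties}: one computes $\st(F_i,D(n,d-1))$ for each $i$ from the facet descriptions, and telescopes. The flags are chosen so that each $F_i$ is a genuine face of $D(n,d-1)$ (hence of $P$), which must also be checked — this follows from neighborliness of the relevant $B$-subcomplexes and direct inspection of Lemma~\ref{lem:facets-of-D}.

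The step I expect to be the real obstacle is confirming that the star-difference complex built from the chosen flag is \emph{exactly} $K(n,d-1)$, with no extra facets and none missing; the nested complementation $\st(F_1)\backslash(\st(F_2)\backslash(\cdots))$ is delicate, and it is easy to be off by the "boundary" facets of type~(2)/(3) in Lemma~\ref{lem:facets-of-D} that sit near the vertex $1$ or near $\{n-1,n\}$. A clean way to handle this is to reuse the union-of-balls identity from the proof of Lemma~\ref{lem:sewing}: we already know $\Gamma^J_{n+1}=\Gamma^J_n\cup\big(K(n,d-1)*(n+1)\big)$ with intersection $K(n,d-1)$, so $K(n,d-1)$ is precisely the part of $\partial\Gamma^J_n=D(n,d-1)$ that gets "covered" when we pass to $\Gamma^J_{n+1}$. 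Thus it suffices to recognize this covered subcomplex as the alternating star-difference of the flag above, which reduces the whole matter to the combinatorial identity in part~2 of Lemma~\ref{lm: cyclic polytope properties} applied to the cyclic-polytope links appearing in Lemma~\ref{lm: sibling properties}. Assembling these pieces gives the polytopality of $D(n+1,d-1)$ and closes the induction.

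\endproof
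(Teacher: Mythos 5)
Your overall architecture is exactly the paper's: induct on $n$, use Lemma \ref{lem:sewing} to recognize the passage from $D(n,d-1)$ to $D(n+1,d-1)$ as a sewing step, invoke Shemer's \cite[Lemma 4.4]{Shemer}, and reduce everything to exhibiting a flag of faces whose alternating star-difference is exactly $K(n,d-1)$. However, the flag you write down in the case $d=2k$ is wrong. Every facet of $K(n,2k-1)=\overline{\{n-1,n\}}*B([1,n-2],2k-3)$ contains $\{n-1,n\}$, so the flag must \emph{begin} at $F_1=\{n-1,n\}$; the paper takes $F_1=\{n-1,n\}\subsetneq F_2=\{n-2,n-1,n\}\subsetneq\cdots\subsetneq F_{2k-1}=\{n-2k+1,\dots,n\}$. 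With your flag beginning at $F_0=\emptyset$ and $F_1=\{n\}$, the resulting complex $\st(F_0)\backslash\big(\st(F_1)\backslash\Lambda\big)$ (where $\Lambda$ denotes the remaining nested difference) consists of all facets of $D(n,2k-1)$ that do \emph{not} contain $n$, together with the facets of $\Lambda$. Since $D(n,2k-1)$ has many facets avoiding $n$ (e.g., the facets of type (1) in Lemma \ref{lem:facets-of-D} with $i_k+2<n$), your star-difference is strictly larger than $K(n,2k-1)$, and sewing a point exactly beyond it would not produce $D(n+1,2k-1)$.

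The source of the slip is that you transplanted the flag of Lemma \ref{lm: cyclic polytope properties}(2) — which describes the ball $B([1,m],2k-3)$ inside $\partial C(m,2k-2)$ — directly into $D(n,2k-1)$ without accounting for the join. The correct route goes through Lemma \ref{lm: sibling properties}: since $\lk(\{n-1,n\},D(n,2k-1))=\partial C(n-2,2k-2)$, the complex $K(n,2k-1)$ is $\overline{\{n-1,n\}}$ joined with $B([1,n-2],2k-3)$ sitting inside that link, and the link-level flag $\emptyset\subsetneq\{n-2\}\subsetneq\{n-3,n-2\}\subsetneq\cdots$ must be pushed up to $D(n,2k-1)$ by adjoining $\{n-1,n\}$ to every term; this produces precisely the paper's flag. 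Your description of the odd-dimensional case (``starting at $\{n-2,n-1,n\}$'') is consistent with this and matches the paper, which suggests the even case is an oversight rather than a misconception — but as written it fails, and it fails at exactly the step you yourself identified as the crux of the argument.
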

\begin{proof}
Let $k=\lfloor (d+1)/2\rfloor$, and let $F_1=\{n-1, n\}$, $F_2=\{n-2, n-1,n\}$, $\dots$, $F_{2k-1}=\{n-2k+1, \dots, n\}$, $F_{2k}=\{n-2k, \dots, n\}$. It follows  from Lemmas \ref{lm: cyclic polytope properties} and \ref{lem:sewing} 	
  that 
  $$ K(n, d-1)=\begin{cases}
  	\st(F_1)\backslash \left(\st(F_2)\backslash \big( \dots\backslash \big(\st(F_{2k-2})\backslash \st(F_{2k-1})\big)\dots\big)\right) & \mbox{if } d=2k\\
  	\st(F_2)\backslash \left(\st(F_3)\backslash \big( \dots\backslash \big(\st(F_{2k-1})\backslash \st(F_{2k})\big)\dots\big)\right) & \mbox{if }d=2k-1;
  \end{cases}$$
here the stars are computed in $D(n, d-1)$. 
	The polytopality of $D(n,d-1)$  is then a consequence of \cite{Shemer} and Lemma \ref{lem:sewing}: indeed, to obtain $D(n,d-1)$, we start with (the boundary of) the $d$-simplex (which is a simplicial polytope), and at each step we are sewing a new vertex whose position in $\R^d$ is determined by a flag; that is, we are performing an operation that, according to \cite[Lemma 4.4]{Shemer}  preserves polytopality.
\end{proof}

Returning to the topic of transversal numbers, we note that as follows from Lemma \ref{lem:facets-of-D}, the sets $\{1,3,5,7, \ldots\}\cap[n]$ and $\big(\{1,2,6,7,11,12,\ldots\}\cap [n]\big)\cup \{n\}$ are transversals of $D(n,2k-1)$ and $D(n,2k)$, respectively, and so $\tau(D(n,2k-1))\leq 1/2$ while $\tau(D(n,2k))\leq 2/5$. In fact, the description of facets from Lemma \ref{lem:facets-of-D} implies the following result.

\begin{lemma} \label{lem:transversal-of-D}
	For $k\geq 2$, $T(D(n, 2k-1))=\frac{n}{2}-O(1)$ and $T(D(n, 2k))=\frac{2n}{5}-O(1)$. 
\end{lemma}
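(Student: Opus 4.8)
The plan is to prove both equalities by the same two–sided argument: the upper bounds come from writing down explicit small transversals, and the lower bounds come from a counting argument on the complement of a transversal, exactly mirroring the proof of Theorem~\ref{prop:T(F_s)}. Throughout we use the facet description from Lemma~\ref{lem:facets-of-D}, and the key structural fact that a "generic" facet of $D(n,2k-1)$ has the form $\{i_1,i_1+1,\dots,i_{k-1},i_{k-1}+1,i_k,i_k+2\}$ with $i_1<i_1+1<i_2<\cdots<i_{k-1}+1<i_k$ (together with the two "boundary" families $\tau\cup\{n-1,n\}$ and $\tau\cup\{1,i_k,i_k+1,i_k+2\}$), and similarly a generic facet of $D(n,2k)$ has the form $\tau\cup\{i_k,i_k+1,i_k+3\}$ or $\tau\cup\{i_k,i_k+2,i_k+3\}$ with $\tau\in B([1,i_k-1],2k-3)$.

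For the \textbf{upper bounds}, I would verify that $\mathcal{T}_1=\{1,3,5,7,\dots\}\cap[n]$ meets every facet of $D(n,2k-1)$: any facet contains a pair $\{i_j,i_j+1\}$ for some $j\le k-1$ (or the pair $\{i_k,i_k+1\}$ in family (3), or $\{n-1,n\}$ in family (2)), and every two consecutive integers contain an odd one, so $\mathcal{T}_1$ is a transversal of size $\lceil n/2\rceil$; hence $T(D(n,2k-1))\le n/2+O(1)$. For $D(n,2k)$ I would check that $\mathcal{T}_2=\big(\{1,2,6,7,11,12,\dots\}\cap[n]\big)\cup\{n\}$, i.e. the union of the residue classes $\{1,2\}$ modulo $5$ together with $n$, meets every facet: in families (1),(2) a generic facet contains $k-1$ consecutive pairs $\{i_j,i_j+1\}$ plus a "spread triple" at the top, and one checks that a block of $5$ consecutive integers of the form used here cannot avoid $\{1,2\}\bmod 5$ simultaneously in all of these pairs; the boundary families $\tau\cup\{n-2,n-1,n\}$ and $\tau\cup\{1,\dots\}$ are handled by the extra vertex $n$ and the vertex $1$. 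This gives $T(D(n,2k))\le 2n/5+O(1)$.

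For the \textbf{lower bounds}, let $\mathcal{T}$ be a transversal and set $\mathcal{T}^c=\{a_1<a_2<\cdots<a_p\}$, $M=\{a_2-a_1,a_3-a_2,\dots,a_p-a_{p-1}\}$. In the $D(n,2k-1)$ case, if $M$ contained $k$ indices $j_1<\cdots<j_k$ with all gaps equal to the same value $\ell$, and if these occurrences could be interleaved appropriately (consecutive pairs, with the last one allowed gap $2$ via family (1)), we would produce a facet disjoint from $\mathcal{T}$; so $M$ can have only $O(1)$ "short" gaps, forcing $|M|=p-1\le n/2 + O(1)$, hence $|\mathcal{T}^c|\le n/2+O(1)$ and $T(D(n,2k-1))=|\mathcal{T}|\ge n/2-O(1)$. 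The $D(n,2k)$ case is the same with the threshold $5$ replacing $2$: a facet uses $k-1$ consecutive adjacent pairs (gap $1$) followed by a spread of total length $4$, so if too many disjoint length-$5$ windows in $\mathcal{T}^c$ are "tight" we build a forbidden facet; this forces $|\mathcal{T}^c|\le 2n/5+O(1)$. Matching the upper bounds gives the claimed equalities.

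The \textbf{main obstacle} is the bookkeeping in the lower bound: unlike $\mathcal{F}^{n,d}_s$, the facets of $D(n,d-1)$ require the blocks $\{i_j,i_j+1\}$ to appear in a \emph{nested/consecutive} pattern $i_1<i_1+1<i_2<\cdots$ rather than merely being $k$ equal gaps scattered anywhere in $M$, and the top block is an asymmetric spread (gap $2$, or the triple patterns $\{i_k,i_k+1,i_k+3\}$/$\{i_k,i_k+2,i_k+3\}$) rather than another pair. One must check that from any transversal-complement that is "too dense" one can actually extract $k-1$ or $k$ occurrences of a common gap value that are \emph{simultaneously realizable} as a valid facet of the required shape (using, in the odd case, that the last block only needs gap $\le 2$, and in the even case the two allowed triple shapes at the top). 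This is precisely the step where the proof parallels \cite[Proposition 6.11]{NZ-transversal} and the proof of Lemma~\ref{lem: (k+5)/2(k+6)}; since that argument is already in the literature and is repeated below, we omit the routine details.
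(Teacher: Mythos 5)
Your upper bounds are correct and match the paper's: the transversals $\{1,3,5,\dots\}\cap[n]$ for $D(n,2k-1)$ and $\bigl(\{1,2,6,7,11,12,\dots\}\cap[n]\bigr)\cup\{n\}$ for $D(n,2k)$ are exactly the ones the paper exhibits just before stating the lemma. One small imprecision: for $D(n,2k)$ you attribute the hit to the consecutive pairs $\{i_j,i_j+1\}$, but those pairs \emph{can} avoid the residues $\{1,2\}\bmod 5$ (e.g.\ a pair congruent to $\{3,4\}$ or $\{4,0\}$). What actually forces a hit is the terminal ``spread'' $\{i_k,i_k+1,i_k+3\}$ or $\{i_k,i_k+2,i_k+3\}$: a direct residue check shows neither triple can lie entirely in $\{3,4,0\}\bmod 5$. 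The boundary families are handled, as you say, by the special vertices $1$ and $n$.

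On the lower bound, your first sketch — asking for $k$ entries of the gap multiset $M$ all equal to a common value $\ell$ — imports the $\mathcal{F}^{n,d}_s$ mechanism, which does not apply here: the facets of $D(n,d-1)$ require a rigid nested pattern of gap-$1$ pairs followed by a fixed-shape tail (gap $2$, or the length-$4$ spread), not $k$ scattered equal gaps. You do flag this yourself in the ``main obstacle'' paragraph, which is the right diagnosis, and you then defer the details to the argument in Lemma~\ref{lem: (k+5)/2(k+6)} / \cite[Prop.~6.11]{NZ-transversal}. For what it's worth, the paper does the same thing: it explicitly omits this proof and refers to the same two sources. So your proposal is at essentially the same level of rigor and takes the same route as the paper. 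The one substantive criticism is that your initial ``equal gaps $\ell$'' sketch is not a partial version of the correct argument but a different (and here inapplicable) one; a reader who stopped before your obstacle paragraph would be misled. The correct skeleton, following Lemma~\ref{lem: (k+5)/2(k+6)}, is to analyze maximal runs of consecutive integers in $\T^c$: one shows that runs of size $\geq 2$ can occur only $O(k)$ many times to the left of any further gap-$\leq 2$ adjacency in $\T^c$ (else one assembles $k-1$ disjoint pairs and a trailing $\{b,b+2\}$ into a forbidden facet), which forces all but $O(k)$ gaps in $M$ to be $\geq 2$ (resp.\ bounds the density at $3/5$), yielding $|\T^c|\leq n/2+O(k)$ (resp.\ $\leq 3n/5+O(k)$).
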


Since the proof of Lemma \ref{lem:transversal-of-D} is very similar to that of \cite[Proposition 6.11]{NZ-transversal} (see also the proof of Lemma \ref{lem: (k+5)/2(k+6)} below), we only sketch the proof, and we only consider the case of $D(n,2k)$.

{\smallskip\noindent {\it Proof (Sketch): \ }
Consider a transversal $\T$ of $D(n,2k)$. We are interested in the sizes of maximal w.r.t.~inclusion intervals $[i,j]=\{i,i+1,\ldots,j\}$ contained in $\T$ and $\T^c=[n]\backslash \T$, respectively. To avoid any possible confusion, we note that the size of $[i,j]$ is $j-i+1$. The facets of $D(n,2k)$ are described in Lemma \ref{lem:facets-of-D}. Using the facets of type 1 and 2, one proves the following claims:
\begin{enumerate}
\item The union of all intervals of size $\geq 4$ in $\T^c$ has cardinality $O(k)$.
\item The number of singleton intervals of $\T$ that are followed by a non-singleton interval of $\T^c$ is $\leq k$.
\end{enumerate}
These claims then imply that  there exists a set $C\subset [n]$ of size $O(k)$ such that $[n]\backslash C$ is the disjoint union of pairs of adjacent intervals $(I,J)$, where $I\subset \T$, $J\subset \T^c$, and each pair $(I,J)$ satisfies (a) $|I|=2, |J|= 3$ or (b) $|I|\geq  |J|\geq 1$ or (c) $J=\emptyset$. This yields the desired bound $T(D(n, 2k))=\frac{2n}{5}-O(k)$.
\hfill$\square$\medskip}

 Since by Lemma \ref{lem:polytopality}, $D(n, d-1)$ is the boundary complex of a polytope, Lemma \ref{lem:transversal-of-D} yields the promised lower bound on $\tau_{2k+1}^P$:
\begin{theorem}  \label{thm:poly-2/5}
	For $k\geq 2$, $\tau^P_{2k}\geq \frac{1}{2}$ and $\tau^P_{2k+1}\geq \frac{2}{5}$.
\end{theorem}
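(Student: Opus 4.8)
The statement to be proved is Theorem~\ref{thm:poly-2/5}, which asserts $\tau^P_{2k}\geq \frac{1}{2}$ and $\tau^P_{2k+1}\geq \frac{2}{5}$ for all $k\geq 2$. The plan is to combine two facts already established in the excerpt: the polytopality of the siblings $D(n,d-1)$ (Lemma~\ref{lem:polytopality}) and the asymptotics of their transversal numbers (Lemma~\ref{lem:transversal-of-D}). Concretely, I would proceed as follows.

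First, recall that by Lemma~\ref{lem:polytopality}, for each $d\geq 4$ and each $n>d$ the complex $D(n,d-1)$ is the boundary complex of a simplicial $d$-polytope, which has exactly $n$ vertices. Hence, by the very definition of $\tau^P_d$ as a $\limsup$ over boundary complexes of simplicial $d$-polytopes with $n$ vertices,
\[
\tau^P_d \;\geq\; \limsup_{n\to\infty}\,\tau\bigl(D(n,d-1)\bigr).
\]
Second, invoke Lemma~\ref{lem:transversal-of-D}: for $d=2k$ we have $T(D(n,2k-1))=\frac{n}{2}-O(1)$ — wait, indices: writing $d-1=2k-1$, i.e.\ $d=2k$, gives $T(D(n,2k-1))=\tfrac n2 - O(1)$, so $\tau(D(n,2k-1)) = \tfrac12 - O(1/n)\to \tfrac12$; and writing $d-1=2k$, i.e.\ $d=2k+1$, gives $T(D(n,2k))=\tfrac{2n}{5}-O(1)$, so $\tau(D(n,2k))=\tfrac25 - O(1/n)\to\tfrac25$. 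Combining the two displayed facts yields $\tau^P_{2k}\geq \tfrac12$ and $\tau^P_{2k+1}\geq \tfrac25$, completing the proof.

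There is essentially no obstacle remaining at this stage — the theorem is a bookkeeping corollary of the two lemmas. All the real work has already been carried out earlier: the construction of $\Gamma^J_n$ and the verification (Lemma~\ref{lem:m-neighb-m-stacked}) that it is an $m$-neighborly $m$-stacked PL $d$-ball; the identification (Lemma~\ref{lem:sewing}) of $D(n+1,d-1)$ as obtained from $D(n,d-1)$ by a single sewing step; the resulting polytopality via Shemer's theorem (Lemma~\ref{lem:polytopality}); and the transversal computation (Lemma~\ref{lem:transversal-of-D}), whose proof mirrors that of \cite[Proposition 6.11]{NZ-transversal}. If I were to worry about anything, it would be purely a matter of making sure the index translation between ``$d$-polytope'' and ``$(d-1)$-sphere $D(n,d-1)$'' is done correctly — the sibling $D(n,d-1)$ is the boundary of a $d$-polytope, so $D(n,2k-1)$ contributes to $\tau^P_{2k}$ and $D(n,2k)$ contributes to $\tau^P_{2k+1}$ — but this is trivial to check against the definitions. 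Thus the proof is simply: cite Lemmas~\ref{lem:polytopality} and \ref{lem:transversal-of-D}, pass to the limit, done.
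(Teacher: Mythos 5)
Your argument is exactly the paper's: the theorem is stated as an immediate consequence of Lemma~\ref{lem:polytopality} (polytopality of $D(n,d-1)$) together with Lemma~\ref{lem:transversal-of-D} (its transversal asymptotics), with the same index translation from $(d-1)$-sphere to $d$-polytope. Nothing is missing.
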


We close this section with several remarks.

\begin{remark} Comparing the transversal numbers of $D(n, 2k)$ and $\partial C(n, 2k+1)$, which are $\frac{2n}{5}-O(1)$ and $2$, respectively, we conclude that for $n$ sufficiently large, these spheres are not combinatorially equivalent. Similarly, for $n$ sufficiently large, $D(n, 2k-1)$ and $\partial C(n, 2k)$ are not combinatorially equivalent either. This can be seen by observing that the links of $(2k-3)$-faces of $\partial C(n, 2k)$ are either $3$-, $4$-, or $(n-2k+2)$-cycles. On the other hand, the link of any $F=\{i_2,i_2+1,\dots, i_{k-1}, i_{k-1}+1, i_k, i_k+2\} \subset[2,n-2]$ in $D(n, 2k-1)$ is a cycle that contains all the vertices in $[1, i_k+4]$ except for those in $F$ and $i_k+3$. 
(In fact, as was checked with Sage, $D(8,3)$ and $\partial C(8, 4)$ are already non-isomorphic.)
\end{remark} 

\begin{remark} \label{rm:D(n,d)-vs-positive}
 Our definition of $D(n,d)$ was originally conceived by looking at a certain family
$\Delta^d_n$ of centrally symmetric (cs, for short) $\lceil d/2 \rceil$-neighborly $d$-spheres \cite{NZ-cs-neighborly, NZ-cs-neighborly-new}. The vertex set of the complex $\Delta^d_n$ is $\{\pm 1,\ldots,\pm n\}$, and we refer to a face of $\Delta^d_n$ as \emph{positive} if it is a subset of $[n]$. In the case of $d=3$ and $4$, the complete list of positive facets of $\Delta^d_n$ is known; see \cite[Lemma 3.1]{NZ-cs-neighborly-new} and \cite[Theorem 5.13]{Pfeifle-20}. Specifically, the list of positive facets of $\Delta^3_n$ consists of
\[ \{i, i+1,\ell,\ell+2\} \mbox{ (for  $1\leq i < i+1<  \ell \leq n-2$)}  \mbox{ and } \{i,i+1,n-1,n\} \mbox{ (for  $1\leq i  \leq  n-3$)}.\]
These facets form a $2$-neighborly $2$-stacked $3$-ball that is a subcomplex of $D(n,3)$. The remaining facets of $D(n,3)$ are  $\{1, \ell, \ell+1, \ell+2\}$ for $2\leq \ell\leq n-2$, and they form a $1$-stacked $3$-ball. Similarly, the list of positive facets of $\Delta^4_n$ consists of the following subsets of $[n]$: $$\{i <i+1<\ell< \ell+1< \ell+3\}, \{i< i+1< \ell< \ell+2< \ell+3\}, \{i< i+1< n-2< n-1< n\}.$$
These facets form a subcomplex of $D(n, 4)$; the remaining facets are $\{1, \ell, \ell+1, \ell+2, \ell+3\}$ for $2\leq \ell \leq n-3$, and they again form a $1$-stacked $4$-ball. In other words, for $d=3,4$, the subcomplex of the cs-$\lceil d/2\rceil$-neighborly $d$-sphere $\Delta^d_n$ generated by the positive facets can be completed to a non-cs $\lceil d/2 \rceil$-neighborly $d$-sphere $D(n, d)$.
\end{remark}

\section{Transversal ratios of spheres}
The goal of this section is to construct spheres of dimensions $3$, $4$, and $5$ with larger transversal ratios than the current record. One approach to do so is as follows. Start with a PL $(d-1)$-sphere $\Delta$ with a relatively large transversal ratio (such as $\partial C(n, 2k)$ or $D(n, 2k)$). Then apply a sequence of bistellar flips, or local retriangulations, to ensure that the resulting complex (another PL sphere, by Pachner's theorem) has an even larger transversal ratio. We flesh out the details of this approach in Section 5.1. Then in Sections 5.2 and 5.3 we provide specific constructions.

\subsection{Retriangulations}

Let $\Delta$ be a PL $(d-1)$-sphere. A local retriangulation of $\Delta$ is defined as follows. Consider a collection of PL $(d-1)$-balls $\mathcal{B}$ in $\Delta$ (each with a small number of vertices) that are pairwise vertex-disjoint. For each $B\in\mathcal{B}$, find a new PL $(d-1)$-ball $B'$ such that $\partial B'=\partial B$; we call such $B'$ a {\em retriangulation} of $B$. Let $\Delta'$ be obtained from $\Delta$ by replacing each $B\in \mathcal{B}$ with the corresponding $B'$. If $\Delta'$ is a PL $(d-1)$-sphere, then we say that $\Delta'$ is a {\em local retriangulation} of $\Delta$. (For example, any complex obtained from $\Delta$ by a bistellar flip is a local retriangulation of $\Delta$.) In order to guarantee that $\Delta'$ is a PL sphere, some mild restrictions on the balls in $\mathcal{B}$ are needed. One set of such restrictions is given by the following simple lemma whose proof we omit. 

\begin{lemma}\label{lem: retriangulation of PL spheres}
	Let $\Delta$ be a PL $(d-1)$-sphere, let $B\subset \Delta$ be a PL $(d-1)$-ball, and let $K$  be a PL $(d-1)$-ball such that $\partial K=\partial B$ and, in addition, $V(K)=V(B)$ or $V(K)\cap V(\Delta)=V(\partial K)$. If $B$ is an induced subcomplex of $\Delta$, then $K\cap (\Delta\backslash B)=\partial K$. In particular, replacing $B$ with $K$ in $\Delta$ results in a PL $(d-1)$-sphere.		
\end{lemma}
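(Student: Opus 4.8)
The plan is to verify the two conclusions separately and then combine them. First I would show that $K$ and $\Delta\setminus B$ share no common interior face. Let me argue by contradiction: suppose $G$ is a face that is interior in $K$ and also interior (equivalently, a facet or a ridge contained in two facets, etc.; more precisely, not a boundary face) in $\Delta\setminus B$. The key point is that the vertex set of $G$ lies in $V(K)\cap V(\Delta\setminus B)$. Under either hypothesis on $V(K)$, this intersection is contained in $V(\partial K)=V(\partial B)$: in the case $V(K)=V(B)$ this is because $B$ is induced in $\Delta$, so any face of $\Delta$ whose vertices lie in $V(B)$ already lies in $B$, and a face of $B$ that also belongs to $\Delta\setminus B$ must be a face of $B\cap(\Delta\setminus B)=\partial B$; in the case $V(K)\cap V(\Delta)=V(\partial K)$ the inclusion $V(G)\subseteq V(\partial K)$ is immediate since $G$ is a face of $\Delta\setminus B\subseteq\Delta$. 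So in both cases $G$ is a face of $\partial K=\partial B$, hence $G$ is a boundary face of $K$, contradicting that $G$ is interior in $K$. (I would also note the symmetric statement: $G$ interior in $\Delta\setminus B$ forces $G\in\partial B$, again a contradiction.)

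Next I would check that $\Delta' := (\Delta\setminus B)\cup K$ is a PL $(d-1)$-sphere. Since $\partial K=\partial B$, the complex $\Delta'$ is abstractly the union of the two PL $(d-1)$-balls $K$ and $\Delta\setminus B$ glued along their common boundary subcomplex $\partial B$; moreover $\Delta\setminus B$ is itself a PL $(d-1)$-ball because removing an induced PL ball $B$ from the PL sphere $\Delta$ leaves a PL ball with $\partial(\Delta\setminus B)=\partial B$ (this is the standard fact that $\Delta = B\cup(\Delta\setminus B)$ is a decomposition of the sphere into two complementary balls, and it is exactly the setup under which local retriangulations are defined in Section~5.1). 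Gluing two PL $d'$-balls along a common boundary $(d'-1)$-sphere $\partial B$ via the identity map on that boundary yields a PL $d'$-sphere; this is a basic PL topology fact (see \cite{Hudson,Lickorish}). The one thing that needs the first part of the lemma is that the combinatorial gluing is ``clean'' — that no unexpected identifications of interior faces occur — which is precisely what the no-common-interior-faces statement guarantees, so that $\Delta'$ as a simplicial complex really is the PL sphere obtained from this gluing.

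The main obstacle, and the only place requiring care, is the first part: pinning down that $V(K)\cap V(\Delta\setminus B)\subseteq V(\partial K)$ in the case $V(K)=V(B)$, which is where the hypothesis that $B$ is \emph{induced} in $\Delta$ is essential. Without inducedness, a face spanning vertices of $B$ need not lie in $B$, and a face could be interior in $K$ yet reappear as an interior face coming from outside. Everything else is bookkeeping: translating ``interior face of a PL ball'' into ``not a face of the boundary complex'' and invoking the standard PL gluing lemma. For this reason — and since this is a routine packaging of standard facts — it is reasonable to omit the detailed verification, as the authors do.
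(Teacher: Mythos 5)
The paper omits the proof of this lemma, so I can only evaluate your argument on its own merits. The overall plan is right: show the gluing is clean (any face common to $K$ and $\Delta\setminus B$ lies in $\partial K=\partial B$), then invoke Newman's theorem that the closed complement of a PL ball in a PL sphere is again a PL ball with the same boundary, and finally the standard PL fact that two PL $(d-1)$-balls glued along their common boundary sphere form a PL $(d-1)$-sphere. However, there is a genuine logical gap in your handling of the second vertex hypothesis, and a misattribution of where inducedness is actually used.

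In the case $V(K)\cap V(\Delta)=V(\partial K)$ you establish only that $V(G)\subseteq V(\partial K)$, and then immediately write "so in both cases $G$ is a face of $\partial K$." That implication is false in general: $\partial K$ need not be an induced subcomplex of $K$, so an interior face of $K$ can perfectly well have all its vertices on $\partial K$ (the diagonal of a triangulated square is the standard example). The step from the vertex containment to $G\in\partial K$ needs exactly the inducedness argument you run in case 1: since $V(G)\subseteq V(\partial K)=V(\partial B)\subseteq V(B)$ and $G\in\Delta\setminus B\subseteq\Delta$ with $B$ induced in $\Delta$, one gets $G\in B$; combined with $G\in\Delta\setminus B$ this yields $G\in B\cap(\Delta\setminus B)=\partial B=\partial K$, contradicting interiority. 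So inducedness of $B$ is used in \emph{both} cases, not only when $V(K)=V(B)$. Your closing paragraph compounds this by asserting that inducedness is what "pins down" the vertex containment $V(K)\cap V(\Delta\setminus B)\subseteq V(\partial K)$ when $V(K)=V(B)$ — but that containment holds without inducedness (an interior vertex of $B$ has $\st(v,B)=\st(v,\Delta)$, so it cannot appear in $\Delta\setminus B$). What inducedness actually buys you, in both cases, is the promotion from a vertex-set inclusion $V(G)\subseteq V(B)$ to a face membership $G\in B$; without it, the claim fails. Once this is corrected, the remaining ingredients — the identity $B\cap(\Delta\setminus B)=\partial B$ (which uses that links of faces of a PL sphere are PL spheres), Newman's theorem, and the gluing lemma for PL balls — are used correctly.
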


In the case of $d=2k$, let $\Delta=\partial C(n, 2k)$ and consider a family of disjoint PL $(d-1)$-balls $\mathcal{ B}=\{B([a_i, b_i], 2k-1): [a_i, b_i] \subsetneq [1,n], b_i-a_i\geq 2k\}$ in $\Delta$. Since (as follows from the Gale evenness condition) each $B([a_i,b_i],2k-1)$ is an induced subcomplex of $\partial C(n,2k)$, Lemma \ref{lem: retriangulation of PL spheres} applies. It remains to find appropriate retriangulations of $B([a_i, b_i], 2k-1)$ that increase the transversal ratio of the resulting complex. (Recall that $\tau(\partial C(n, 2k))=\frac{1}{2}-o(1)$.) Some sufficient conditions guaranteeing such an increase are given in the following lemma and remark. To simplify notation, for $F=\{i_1, \dots, i_k\}\subset [n]$, we let $F+j=\{i_1+j, \dots, i_k+j\}$. We also define $\Delta+i=\{\tau+i: \tau\in \Delta\}$.

\begin{lemma} \label{rem:Gamma_n}
 Assume $\Delta_n$ is the pure $(2k-1)$-dimensional complex  on vertex set $[(4k-1)n]$ whose set of facets consists of 
\begin{enumerate} 
\item all facets of $B([1, (4k-1)n], 2k-1)$ except for the facets of the following $n$ balls: $B([1, 4k-1], 2k-1)+(4k-1)m$, where $0\leq m\leq n-1$; 
\item the sets  $\{1,3,\dots, 4k-1\}+(4k-1)m$, where $0\leq m\leq n-1$. 
\end{enumerate}
Then  $\lim_{n\to\infty}\tau(\Delta_n)=\frac{2k}{4k-1}$.
\end{lemma}
\proof
Consider the set $$\bigcup_{m=0}^{n-1}\{1,3, \dots, 4k-1\}+(4k-1)m.$$
It forms a transversal of  $\Delta_n$, and hence  $\tau(\Delta_n)\leq \frac{2k}{4k-1}$.

Now, let $\T_n$ be any transversal of $\Delta_n$. For $0 \leq m\leq n-1$, consider the interval $I_m:=\{1, 2, \dots, 4k-1\}+(4k-1)m$. Call  the set  $I_m\backslash \T_n$ {\em bad} if it contains a pair of consecutive elements. Note that at most $k-1$ of the sets $I_0\backslash \T_n, \dots, I_{n-1}\backslash \T_n$   could be bad: if there were $k$ bad sets, then the union of $k$ pairs of consecutive elements, one from each bad set, would form a facet of type 1 that is disjoint from $\T_n$.
 In addition, facets of type 2 guarantee that $I_m\cap \T_n$ must contain at least one odd integer if $m$ is even, and at least one even integer  if $m$ is odd. This implies that at least $n-(k-1)$ of the sets $I_m\cap \T_n$ have size $\geq 2k$.
Thus
$$\tau(\Delta_n)=\frac{|\T_n|}{(4k-1)n}\geq \frac{2k}{4k-1}\left(1-\frac{k-1}{n}\right)=\frac{2k}{4k-1}-o(1).$$ The lemma follows.
\endproof

\begin{remark} \label{lem:4kn}
	Let $\Delta'_n$ be the pure $(2k-1)$-dimensional complex on vertex set $[4kn]$ whose set of facets is given by  (1) all facets of $B([1, 4kn], 2k-1)$ except for the facets of the balls $B([1, 4k], 2k-1)+4km$, where $0\leq m \leq n-1$, and (2)
	 the sets $\{1, 3, 5,\dots, 4k-1\}+4km$ and $\{2, 4, 6,\dots, 4k\}+4km$, where $0\leq m\leq n-1$.
    Then a proof similar to the one above shows that $\lim_{n\to\infty}\tau(\Delta_n)=\frac{4k+1}{8k}$.
\end{remark}

We are interested in whether a complex $\Delta$ as in Lemma \ref{rem:Gamma_n} or in Remark \ref{lem:4kn} can be completed to a simplicial sphere (or even the boundary of a simplicial polytope) of the same dimension and on the same vertex set  as $\Delta$. In view of Lemma \ref{lem: retriangulation of PL spheres} and part 4 of Lemma \ref{lm: cyclic polytope properties}, this leads to
\begin{question}\label{Question}
	Let $k\geq 2$.  
	\begin{enumerate}
		\item Is there a triangulation of $C(4k-1, 2k-1)$ that contains the facet $\{1,3,\dots, 4k-1\}$? \item Is there a triangulation of $C(4k, 2k-1)$ that contains the facets $\{1,3,\dots, 4k-1\}$ and $\{2,4,\dots, 4k\}$? 
	\end{enumerate}
\end{question}

\subsection{Dimensions $3$ and $5$}
When $k=2$, the answer to both parts of Question \ref{Question} is yes.

For part 1, consider the complex $L_7$ generated by the facets 
$$\{1,2,3,7\}, \{1,3,4,5\}, \{1,3,5,7\}, \{3,4,5,7\}, \{1,5,6,7\}.$$
It is a simplicial $3$-ball with vertex set $[7]$ whose boundary complex coincides with that of $\partial B([1,7], 3)=\partial C(7,3)$. Thus $L_7$ is a desired triangulation of $C(7,3)$, and $L_7+i$ is a retriangulation of $B([1+i,7+i],3)=B([1,7],3)+i$ for all $i$.
{Lemmas \ref{lem: retriangulation of PL spheres} and \ref{rem:Gamma_n}, along with the observation that $\{1,3,5,7\}$ is a transversal of $L_7$,  then imply

\begin{theorem}  \label{thm:3-sphere-5/8}
Let $\Lambda_n$ be the complex obtained from $\partial C(n,4)$ by replacing each of the balls $B([1,7],3)+7m$, where $0\leq m\leq n/7-1$, with $L_7+7m$. Then $\Lambda_n$ is a PL $3$-sphere and $\lim_{n\to \infty} \tau(\Lambda_n)=4/7$. In particular, $\tau^S_{4}\geq \frac{4}{7}$.
\end{theorem}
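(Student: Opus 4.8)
\textbf{Proof plan for Theorem~\ref{thm:3-sphere-5/8}.}
The plan is to verify the three assertions in turn: that $\Lambda_n$ is a well-defined PL $3$-sphere, that $L_8$ is genuinely a retriangulation of $B([1,8],3)$ with the advertised transversal behavior, and that the global transversal ratio of $\Lambda_n$ tends to $5/8$. First I would check the combinatorial data attached to $L_8$: list the $13$ facets, confirm that they form a shellable (hence PL) $3$-ball on vertex set $[8]$, and compute $\partial L_8$ by finding the ridges ($2$-faces) lying in exactly one facet. The claim is that $\partial L_8$ equals $\partial C(8,3)$, i.e.\ the $2$-sphere with facets $\{1,2,8\}$, $\{1,7,8\}$, and $\{1,i,i+1\}$, $\{i,i+1,8\}$ for $2\le i\le 6$; this is exactly $\partial B([1,8],3)$ by part~4 of Lemma~\ref{lm: cyclic polytope properties}. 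With that in hand, $L_8$ is a triangulation of $C(8,3)$ in the sense defined in Section~2, and for each $i$ the shifted complex $L_8+i$ is a triangulation of $C([1+i,8+i],3)$ with $\partial(L_8+i)=\partial(B([1,8],3)+i)$.

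Next I would invoke the machinery already set up for retriangulating $\partial C(n,4)$. By the Gale evenness condition, each ball $B([1,8],3)+8m$ is an \emph{induced} subcomplex of $\partial C(n,4)$ (any $2$-face of $\partial C(n,4)$ on vertices inside $[1+8m,8+8m]$ already lies in $B([1,8],3)+8m$), and the balls for distinct $m$ are vertex-disjoint. Since $V(L_8+8m)=V(B([1,8],3)+8m)$, Lemma~\ref{lem: retriangulation of PL spheres} applies to each replacement, so $\Lambda_n$ is a PL $3$-sphere. (One should note $8\mid n$, matching the range $0\le m\le n/8-1$; for $n$ not divisible by $8$ one takes $\lfloor n/8\rfloor$ blocks and leaves a bounded remainder, which does not affect the limit.) This handles the first assertion.

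For the transversal ratio, the upper bound $\tau(\Lambda_n)\le 5/8+o(1)$ comes from exhibiting a transversal: in each block $[1+8m,8+8m]$ take the image of $\{2,4,6,7,8\}$ under $+8m$; this is a transversal of $L_8+8m$ (as observed in the statement), and the facets of $\partial C(n,4)$ not swallowed by any $L_8+8m$ are the $B([1,8],3)$-type facets spanning two consecutive blocks or the ``long'' facets of $\partial C(n,4)$ involving vertices $1$ and $n$ — every such facet contains a pair $\{i,i+1\}$ or $\{i,i+1\}+8m$ that meets $\{2,4,6,7,8\}+8m$, so $5$ vertices per block plus $O(1)$ suffice, giving $\tau(\Lambda_n)\le \tfrac{5(n/8)+O(1)}{n}=\tfrac58+o(1)$. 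The matching lower bound $\tau(\Lambda_n)\ge 5/8-o(1)$ is the substantive part and should mirror the argument of Lemma~\ref{lem:4kn} and \cite[Prop.~6.11]{NZ-transversal}: given any transversal $\T_n$, argue that all but $O(1)$ blocks must contribute at least $5$ vertices to $\T_n$. Concretely, one shows that since certain $2$-faces $\{i,i+1\}+8m$ persist as faces of $L_8+8m$ (namely those coming from the positive/Gale facets surviving in $L_8$) and since the facets of $\Lambda_n$ bridging consecutive blocks still force pairs of consecutive integers to be hit, a block missing the required $5$ vertices in $\T_n$ produces, together with at most $k-1=1$ other deficient blocks, a facet of $\Lambda_n$ disjoint from $\T_n$ — contradiction. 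Summing over the $\Theta(n)$ blocks yields $|\T_n|\ge 5(n/8)-O(1)$, hence the limit is $5/8$ and $\tau^S_4\ge 5/8$.

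\textbf{Main obstacle.} The delicate point is the lower bound: one must pin down \emph{exactly} which pairs $\{i,i+1\}$ remain as edges (and which $2$-faces remain as ridges in facets) of $L_8+8m$, and then combine the within-block obstructions with the between-block facets of $\partial C(n,4)$ to force $5$ vertices per block. This is a careful but finite case analysis on the $13$ facets of $L_8$, entirely parallel to the proof of Lemma~\ref{lem:4kn}; I expect no conceptual difficulty beyond bookkeeping, which is why the paper defers it to the analogous argument in \cite{NZ-transversal}.
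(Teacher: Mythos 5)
Your outline matches the paper's approach — check $\partial L_8 = \partial C(8,3)$, invoke Lemma~\ref{lem: retriangulation of PL spheres} for PL-sphericity, and use $\{2,4,6,7,8\}+8m$ for the upper bound — but you are making the lower bound substantially harder than it is, and you mischaracterize where the paper gets it from.

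The paper's lower bound is a one-line black-box application of Lemma~\ref{lem:4kn}, not a re-run of its argument, and certainly not a deferral to \cite{NZ-transversal} (that citation is attached to the unrelated Lemma~\ref{lem:transversal-of-D}). The point you are missing is monotonicity of transversal number under adding facets: if $\Delta' \subseteq \Delta$ are pure complexes of the same dimension on the same vertex set, any transversal of $\Delta$ is a transversal of $\Delta'$, so $T(\Delta) \geq T(\Delta')$. Now take $\Delta' = \Delta_{n/8}$ from Lemma~\ref{lem:4kn} with $k=2$ (a $3$-complex on $[n]$ when $8 \mid n$): its facets are (i) the facets of $B([1,n],3)$ outside the blocks $B([1,8],3)+8m$ and (ii) the sets $\{1,3,5,7\}+8m$ and $\{2,4,6,8\}+8m$. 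Every one of these is a facet of $\Lambda_n$ — (i) because those facets of $\partial C(n,4)$ are untouched by the replacement, and (ii) because $\{1,3,5,7\}$ and $\{2,4,6,8\}$ are both explicitly among the thirteen listed facets of $L_8$. Hence $T(\Lambda_n) \geq T(\Delta_{n/8})$, and Lemma~\ref{lem:4kn} gives $T(\Delta_{n/8})/n \to 5/8$. Done.

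Because of this, your declared "main obstacle" — pinning down which pairs $\{i,i+1\}$ survive as edges or ridges inside $L_8$ — is a red herring; the argument never needs that information, only the two diagonal facets of $L_8$ and the cross-block facets of $B([1,n],3)$. Similarly, the claim that "a block missing the required $5$ vertices... together with at most $k-1=1$ other deficient blocks, [produces] a facet... disjoint from $\T_n$ — contradiction" is misstated as written: one deficient block alone produces no contradiction; the contradiction arises only when at least $k=2$ blocks each contain a pair of consecutive integers disjoint from $\T_n$ (then the union of two such pairs is a cross-block facet of $B([1,n],3) \subset \Lambda_n$ avoiding $\T_n$), and the extra requirement that $\T_n$ meet both $\{1,3,5,7\}+8m$ and $\{2,4,6,8\}+8m$ is what pushes the per-block count from $4$ up to $5$. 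All of this is exactly the content of the proof of Lemma~\ref{lem:4kn}, which you should invoke wholesale rather than re-derive.
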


For part 2, consider the complex $L_8$ generated by the following facets (in a shelling order):
$$\{2,5,3,7\},\{2,5,7,6\},\{2,5,6,4\},\{2,5,4,3\},$$
$$ \{1,2,3,7\}, \{2,6,7,8\}, \{1,2,7,8\},\{1,3,5,7\},\{2,4,6,8\},$$
$$\{1,5,6,7\},\{2,3,4,8\}, \{1,3,4,5\}, \{4,5,6,8\}.$$
It is a shellable ball with vertex set $[8]$. The boundary complex of $L_8$ is generated by the facets $\{1,2,8\}, \{1,7,8\}$,  and $\{1,i,i+1\}$ and $\{i,i+1,8\} $ for all $2\leq i\leq 6$. In other words, $\partial L_8$ coincides with that of $\partial B([1, 8],3)=\partial C(8,3)$. In particular, $L_8$ is a triangulation of $C(8,3)$, and hence for any $i$, $L_8+i$ is a retriangulation of $B([1+i, 8+i], 3)=B([1,8],3)+i$. 

The same argument (but using Lemma \ref{lem: retriangulation of PL spheres} and Remark \ref{lem:4kn}) implies that  if we start with $\partial C(n,4)$ and replace each $B([1,8],3)+8m$, where $0\leq m\leq n/8-1$ with $L_8+8m$, then the resulting complex is a PL $3$-sphere whose transversal ratio is  $9/16-o(1)$ as $n\to\infty$.

To close this section, we show that part 1 of Question \ref{Question} also has an affirmative answer when $k=3$. Below we provide a particular shelling order of one of the triangulations of $C(11,5)$ (this shelling order was generated by Sage):
$$\{1, 2, 3, 4, 5, 11\}, \{1, 2, 3, 5, 7, 11\}, \{2, 3, 5, 6, 7, 11\}, \{1, 2, 3, 5, 6, 7\}, \{1, 2, 3, 7, 9, 11\},$$
$$\{1, 2, 7, 8, 9, 11\}, \{1, 3, 4, 5, 10, 11\}, \{1, 2, 5, 6, 7, 11\}, \{1, 3, 5, 7, 9, 11\}, \{1, 3, 5, 9, 10, 11\},$$ 
$$\{1, 5, 7, 9, 10, 11\}, \{1, 5, 6, 7, 10, 11\}, \{1, 5, 6, 7, 9, 10\}, \{1, 7, 8, 9, 10, 11\}, \{3, 4, 5, 9, 10, 11\},$$ 
$$\{3, 4, 5, 7, 9, 11\}, \{5, 6, 7, 9, 10, 11\}, \{3, 4, 5, 6, 7, 11\}, \{1, 2, 3, 9, 10, 11\}, \{1, 3, 4, 5, 9, 10\},$$ 
$$\{1, 2, 3, 7, 8, 9\}, \{1, 3, 5, 7, 8, 9\},\{3, 4, 5, 7, 8, 9\}, \{1, 3, 4, 5, 8, 9\}, \{2, 3, 7, 8, 9, 11\},$$ 
$$\{1, 5, 6, 7, 8, 9\}, \{1, 3, 4, 5, 7, 8\},\{1, 3, 4, 5, 6, 7\}, \{3, 4, 7, 8, 9, 11\}, \{4, 5, 7, 8, 9, 11\},$$ 
$$\{5, 6, 7, 8, 9, 11\}.$$

Denote the above complex by $L_{11}$. It is not hard to check that every ridge of $L_{11}$ is contained in at most two facets. Together with the fact that the above ordering is a shelling, we conclude that $L_{11}$ is a PL $5$-ball. Furthermore, one can check that $\partial L_{11}=\partial B([1,11], 5)=\partial C(11,5)$.  Since $L_{11}$ contains $\{1, 3, 5, 7, 9, 11\}$ as a facet, Lemma \ref{lem: retriangulation of PL spheres} and Lemma \ref{rem:Gamma_n}, along with the observation that $\{2,4,6,8,10,11\}$ is a transversal of $L_{11}$, yield

\begin{theorem} \label{thm:5-sphere-6/11}
Let $\Pi_n$ be the complex obtained from $\partial C(n,6)$ by replacing each of the balls $B([1,11],5)+11m$, where $0\leq m\leq n/11-1$, with $L_{11}+11m$. Then $\Pi_n$ is a PL $5$-sphere and $\lim_{n\to\infty}\tau(\Pi_n)=6/11$.
In particular, $\tau^S_{6}\geq \frac{6}{11}$.
\end{theorem}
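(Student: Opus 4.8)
The plan is to verify directly that the combinatorial object $L_{11}$ listed above has the three properties it is claimed to have --- namely that it is a PL $5$-ball, that its boundary equals $\partial C(11,5)$, and that $\{2,4,6,8,10,11\}$ is a transversal of it --- and then to feed this into the retriangulation machinery of Lemma~\ref{lem: retriangulation of PL spheres} together with the transversal count of Remark~\ref{rem:Gamma_n}. For the first property, I would check that the $31$ listed $6$-sets form a shelling: process the facets in the given order and confirm that at each step the intersection of the new facet with the union of the previous ones is a pure $4$-dimensional subcomplex of its boundary (i.e. a nonempty union of facets of $\partial\overline{F}$). Since a shellable pseudomanifold whose ridges each lie in at most two facets is a PL ball, I would additionally verify the pseudomanifold condition by tallying, for each of the $5$-subsets (ridges) appearing, how many of the $31$ facets contain it, and confirm the count is always $1$ or $2$; the ridges lying in exactly one facet then constitute $\partial L_{11}$.

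Next I would compare $\partial L_{11}$ with $\partial C(11,5)$. By part~4 of Lemma~\ref{lm: cyclic polytope properties}, $\partial B([1,11],5)=\partial C(11,5)$, and by the Gale evenness condition (Theorem~\ref{thm: Gale}) the facets of $\partial C(11,5)$ are exactly the $5$-subsets $T$ of $[11]$ such that any two elements of $[11]\setminus T$ are separated by an even number of elements of $T$; since $|[11]\setminus T|=6$, these are precisely the sets one can read off Gale's condition. I would list those facets and check that the set of ridges of $L_{11}$ occurring in exactly one facet coincides with them. Once this is done, $L_{11}$ is a triangulation of $C(11,5)$ in the sense of Section~2.3, and in particular an induced PL $5$-ball inside any PL $6$-sphere that contains $B([1,11],5)$ as an induced subcomplex --- which $\partial C(n,6)$ does for $n$ a multiple of $11$, again by Gale's condition, since the vertex sets $\{1,\dots,11\}+11m$ are pairwise disjoint.

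Now the construction: for $n$ a multiple of $11$, start from $\Delta=\partial C(n,6)$; the balls $B_m:=B([1,11],5)+11m$ for $0\le m\le n/11-1$ are pairwise vertex-disjoint induced PL $5$-balls in $\Delta$, and $L_{11}+11m$ is a PL $5$-ball with the same boundary and the same vertex set. By Lemma~\ref{lem: retriangulation of PL spheres} applied to each $B_m$ in turn, replacing all of them simultaneously yields a PL $5$-sphere $\Pi_n$. Its facet set is exactly that of the complex $\Delta'_n$ of Remark~\ref{rem:Gamma_n} with $k=3$ (all facets of $B([1,11n],5)$ except those of the deleted sub-balls, together with the ``diagonal'' facets $\{1,3,5,7,9,11\}+11m$ coming from the single such facet present in $L_{11}$), except that here the ambient complex is $\partial C(n,6)$ rather than a ball, which only adds the ``wrap-around'' facets near the two ends of the moment curve and does not affect the asymptotics. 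So $\lim_{n\to\infty}\tau(\Pi_n)=\frac{2k}{4k-1}=\frac{6}{11}$; the upper bound $\tau(\Pi_n)\le\frac{6}{11}+o(1)$ comes from the explicit transversal $\{2,4,6,8,10,11\}+11m$ over all $m$ of $L_{11}$ (an even integer from each block plus one endpoint), and the matching lower bound is the pigeonhole argument in the proof of Lemma~\ref{lem:4kn}: a transversal must meet every consecutive pair inside all but at most $k-1=2$ of the blocks (else the union of two missed pairs from three blocks would be a diagonal-type facet missed entirely --- here the relevant facets are the $6$-sets of $B([1,11n],5)$, so two missed pairs suffice once $2k-2=4<6$ is accounted for by using the retained $B$-facets), forcing $|\T_n\cap([1,11]+11m)|\ge 6$ for all but $O(1)$ values of $m$. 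Taking $n$ to infinity along multiples of $11$ (and noting the sequence $\tau^S_6$ is a $\limsup$, so a single such subsequence suffices) gives $\tau^S_6\ge\frac{6}{11}$.

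The main obstacle is entirely computational rather than conceptual: one must actually certify, by hand or by appeal to the cited Sage computation, that the $31$-facet list is a shelling, that its every ridge is in at most two facets, and that its boundary is $\partial C(11,5)$. Of these, the shelling check is the most delicate, since it requires tracking the growing union of facets and verifying the ``connected in codimension one within $\partial\overline{F}$'' condition at each of the $31$ steps; the pseudomanifold and boundary-identification checks are more mechanical. Everything after that --- disjointness of the blocks, applicability of Lemma~\ref{lem: retriangulation of PL spheres}, and the transversal bounds --- is a direct transcription of the $k=2$ arguments in Section~5.2 and the general scheme in Remark~\ref{rem:Gamma_n}, and presents no new difficulty.
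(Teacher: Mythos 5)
Your proposal follows the paper's own proof essentially verbatim: certify that $L_{11}$ is a shellable pseudomanifold (hence a PL $5$-ball) with $\partial L_{11}=\partial C(11,5)$ and containing the facet $\{1,3,5,7,9,11\}$, then combine Lemma~\ref{lem: retriangulation of PL spheres} with the $k=3$ case of Remark~\ref{rem:Gamma_n} for the lower bound and the block transversal $\{2,4,6,8,10,11\}+11m$ for the upper bound. The only differences are expository (you spell out the pigeonhole count from Lemma~\ref{lem:4kn} that the paper delegates to Remark~\ref{rem:Gamma_n}), so the argument is correct and matches the paper.
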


\subsection{Dimension $4$}

The goal of this section is to prove the following result:
\begin{theorem} \label{thm:1/2-in-dim4}
	$\tau^S_5\geq \frac{1}{2}$.
\end{theorem}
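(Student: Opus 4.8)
The plan is to apply the retriangulation strategy of Section~5.1, this time starting from the PL $4$-sphere $D(n,4)$ --- the sibling of the cyclic $5$-polytope constructed in Section~4 --- which by Lemma~\ref{lem:transversal-of-D} satisfies $\tau(D(n,4))=\frac{2}{5}-o(1)$, and performing a sequence of local retriangulations (valid by Lemma~\ref{lem: retriangulation of PL spheres}) that raise the transversal ratio to $\frac{1}{2}-o(1)$. Since $\tau^S_5$ is a $\limsup$, it suffices to produce, for each sufficiently large $k$, a family of simplicial $4$-spheres whose transversal ratios converge to some number $r_k$ with $\sup_k r_k=\frac{1}{2}$ --- plausibly $r_k=\frac{k+5}{2(k+6)}$, the quantity named in the forward reference to Lemma~\ref{lem: (k+5)/2(k+6)}; a diagonalization $k=k(n)\to\infty$ then also yields a single family with $\tau\to\frac{1}{2}$, matching the statement in the abstract. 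As a first step I would use the explicit facet list of Lemma~\ref{lem:facets-of-D} to check that, for an interval $I=[a,b]\subseteq[2,n-3]$, the induced subcomplex $D(n,4)[I]$ uses only facets of types~1 and~2, hence is a translate of a fixed PL $4$-ball $\tilde B_{b-a+1}$ (cf.\ Lemma~\ref{lem:m-neighb-m-stacked} and the ``positive part'' description in Remark~\ref{rm:D(n,d)-vs-positive}), and to compute its boundary. Tiling all of $[n]$ except an $O(1)$-size piece near the special vertices $1,n-2,n-1,n$ by consecutive blocks $I_m$ of a fixed length $N=N_k$, the subcomplexes $D(n,4)[I_m]$ are pairwise vertex-disjoint induced PL $4$-balls, so Lemma~\ref{lem: retriangulation of PL spheres} licenses replacing each of them.

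The heart of the argument is the construction of a suitable replacement ball. For each $k$ one needs a PL $4$-ball $L^{(k)}$ on vertex set $[N_k]$ with $\partial L^{(k)}=\partial\tilde B_{N_k}$ and with $T(L^{(k)})\ge r_kN_k-O(1)$, where $r_k\to\frac{1}{2}$. The natural candidate is inspired by the cyclic polytopes: $L^{(k)}$ should contain a few ``parity-forcing'' facets (block-analogues of $\{1,3,5,\dots\}$ and $\{2,4,6,\dots\}$) and, by being built out of $k$ near-disjoint sub-gadgets, should eliminate enough of the ``lone-vertex escapes'' --- the lone entries $\ell$ and $\ell+3$ in the type-1 and type-2 facets of $D(n,4)$, which are exactly what pulls $\tau(D(n,4))$ down to $2/5$ --- so that any transversal must meet at least a $\frac{k+5}{2(k+6)}$-fraction of each block. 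As was done for $L_8$ and $L_{11}$ in the proofs of Theorems~\ref{thm:3-sphere-5/8} and~\ref{thm:5-sphere-6/11}, I would exhibit $L^{(k)}$ together with a shelling order, check that each ridge lies in at most two facets (so that $L^{(k)}$ is a PL ball), and verify that $\partial L^{(k)}=\partial\tilde B_{N_k}$; for small $k$ this can be carried out with computer assistance, and one then extrapolates to general $k$.

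Let $\Delta^{n,k}$ be obtained from $D(n,4)$ by replacing each $D(n,4)[I_m]$ with $L^{(k)}+N_km$; by the first paragraph it is a PL $4$-sphere. For the bound $\tau(\Delta^{n,k})\le r_k+o(1)$ one exhibits a transversal of size $r_kn+O(1)$ assembled from the gadget transversals, chosen dense enough that --- as in the proof of Theorem~\ref{thm:3-sphere-5/8} --- it also covers the facets of $D(n,4)$ straddling block boundaries. For the matching lower bound I would prove a statement about the abstract model complex (analogous to Lemma~\ref{lem:4kn}, presumably Lemma~\ref{lem: (k+5)/2(k+6)}): all but $O_k(1)$ of the blocks $I_m$ have the property that every transversal hits at least a $\frac{k+5}{2(k+6)}$-fraction of the block, the point being that if $k$ blocks simultaneously fail on the relevant pattern, then the union of the corresponding faces is a \emph{surviving} (type-1 or type-2) facet of $D(n,4)$ disjoint from the transversal. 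Combining the two bounds gives $T(\Delta^{n,k})=r_kn-O_k(1)$, hence $\tau(\Delta^{n,k})\to r_k$ as $n\to\infty$, and finally $\tau^S_5\ge\sup_k r_k=\frac{1}{2}$.

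The main obstacle is the gadget construction: producing, for arbitrarily large $k$, an \emph{explicit} PL $4$-ball $L^{(k)}$ whose boundary matches that of the induced ball of $D(n,4)$ and whose transversal number is $\bigl(\frac{1}{2}-o_k(1)\bigr)N_k$, and certifying that it is a genuine PL ball (shellable, with at most two facets on each ridge). In contrast to the $3$- and $5$-sphere cases, where the base sphere already had transversal ratio $\frac{1}{2}$ and a single gadget with one extra ``all-odd'' facet sufficed to push above it, here the base ratio is only $\frac{2}{5}$, so the gadget must both kill all the lone-vertex escapes inside the block \emph{and} supply the parity constraints, and must do so uniformly in a growing parameter rather than for one small complex. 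Determining the sharp fraction $r_k$ and verifying the PL-ball conditions for the entire family is where the real work lies.
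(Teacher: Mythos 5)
Your proposal correctly identifies the target ($r_k=\frac{k+5}{2(k+6)}\to\frac12$, starting from $D(n,4)$ and modifying it locally), but it is not a proof: the object on which everything rests --- the replacement ball $L^{(k)}$ with $\partial L^{(k)}=\partial\tilde B_{N_k}$ and $T(L^{(k)})\ge r_kN_k-O(1)$, for \emph{arbitrarily large} $k$ --- is never constructed, and you acknowledge this yourself in the last paragraph. Unlike Theorems \ref{thm:3-sphere-5/8} and \ref{thm:5-sphere-6/11}, where a single finite gadget ($L_8$, $L_{11}$) found by computer suffices and only needs to be checked once, here you would need an infinite family of gadgets with a uniform structure theory (shellability, ridge condition, boundary identification, and a transversal lower bound), and you give no candidate and no existence argument. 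As it stands, the proposal reduces the theorem to an unproved claim that is at least as hard as the theorem itself.

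The paper's actual route avoids this obstacle entirely and is structurally different from what you propose. Instead of tiling $[n]$ by vertex-disjoint blocks and swapping induced balls, it performs $k$ successive rounds of \emph{bistellar flips} on $D(n,4)$: at round $\ell$ one flips, simultaneously for all admissible $i$, the star $\overline{\{i-\ell,i+2,i+4\}}*\partial\overline{\{i-\ell-1,i+1,i+5\}}$ (Definition \ref{def:Gamma_n,k}); these stars overlap in vertices but share no facets, which is all that is needed for the flips to commute, and PL-ness is free from Pachner's theorem. Crucially, the resulting complex $\Gamma_{n,k}$ is \emph{not} a block-local modification: the facets that force transversals to be large --- $\{i-\ell,i-\ell+1,i+1,i+4,i+5\}$ for all $1\le\ell\le k$, together with the surviving facets $\{i,i+1,j,j+1,j+3\}$ with $j>i+k+1$ ranging over arbitrarily long distances --- tie together positions that would lie in different blocks of any fixed-length tiling. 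Correspondingly, the lower bound $\liminf_n\tau(\Gamma_{n,k})\ge\frac{k+5}{2(k+6)}$ (Lemma \ref{lem: (k+5)/2(k+6)}) is a single global interval-counting argument over all of $[n]$ (bounding the gaps between consecutive ``type $(2,3)$'' pairs by $k+1$ using the long-range facets), not a per-block estimate. If you want to rescue your plan, you would either have to prove that a suitable $L^{(k)}$ exists --- which is doubtful precisely because the mechanism pushing the ratio above $2/5$ in the paper is inherently non-local --- or abandon the vertex-disjoint-block framework in favor of the overlapping-flip inductive scheme.
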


Our strategy is to apply sequences of bistellar flips to the family of PL spheres, $D(n, 4)$, introduced in Section 4. This will require the following lemma and definition.
\begin{definition} \label{def:Gamma_n,k}
	Let $\Gamma_{n,0}=D(n,4)$. Assume that for a fixed $k\geq 0$ and for all $k+2\leq i\leq n-5$, the set $\{i-k-1, i+1, i+5\}$ is not a face of $\Gamma_{n, k}$ while the set $\{i-k, i+2, i+4\}$ is a face of $\Gamma_{n, k}$, and furthermore, that $$\st(\{i-k, i+2, i+4\}, \Gamma_{n, k})=\overline{\{i-k, i+2, i+4\}}* \partial \overline{\{i-k-1, i+1, i+5\}}.$$ Note that no two such stars share a common facet. 
	Hence we can apply bistellar flips simultaneously to all stars $\st(\{i-k, i+2, i+4\}, \Gamma_{n, k})$ for $k+2\leq i\leq n-5$. We define $\Gamma_{n,k+1}$ to be the resulting complex. 
\end{definition}
This definition is justified by the following lemma.

\begin{lemma}\label{lm: gamma_nk}
For all $n\geq 6$ and all $0\leq k\leq n-6$, the complex $\Gamma_{n,k}$ satisfies the following properties.
	\begin{enumerate} 
	\item Each of the following sets is a facet of $\Gamma_{n,k}$:
	\begin{enumerate}
		\item $\{i,i+1,j,j+1,j+3\}$, where $i\geq 1$ and  $i+k+1<j\leq n-3$;
		\item $\{i,i+1,j,j+2,j+3\}$, where $i\geq 1$ and $i+k+2<j\leq n-3$;
		\item $\{i-k,i+1,i+2,i+4,i+5\}$, where $k+1\leq i\leq n-5$;
		\item $\{i-\ell,i-\ell+1,i+1,i+4,i+5\}$, where $1\leq \ell\leq k$ and $\ell<i\leq n-5$.
	\end{enumerate}
		\item  For all $k+2\leq i\leq n-5$, $\{i-k, i+2, i+4\}$ is a face of $\Gamma_{n, k}$, and
		$$\st(\{i-k, i+2, i+4\}, \Gamma_{n, k})=\overline{\{i-k, i+2, i+4\}}* \partial \overline{\{i-k-1, i+1, i+5\}}.$$
		\item For all $k+2\leq i\leq n-5$, $\{i-k-1, i+1, i+5\}$ is not a face of $\Gamma_{n, k}$.		
	\end{enumerate}
\end{lemma}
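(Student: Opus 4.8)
The plan is to prove Lemma~\ref{lm: gamma_nk} by induction on $k$, with the base case $k=0$ being essentially a restatement of the facet description of $D(n,4)=\Gamma_{n,0}$ from Lemma~\ref{lem:facets-of-D} (together with Definition~\ref{def:Gamma_n,k}). For the base case I would unwind Lemma~\ref{lem:facets-of-D}: the four facet types of $D(n,2k)$ listed there (with the parameter ``$k$'' in that lemma equal to $2$, since $\dim = 4$) translate directly into part (1)(a)--(c) of the present lemma with the running parameter $k$ set to $0$ (part (1)(d) is vacuous when $k=0$), and parts (2) and (3) for $k=0$ amount to checking, from the same list, that $\{i,i+2,i+4\}$ is a face with the stated star while $\{i-1,i+1,i+5\}$ is not a face — this is a direct combinatorial inspection of which $5$-subsets appear, using that the facets containing $\{i,i+2,i+4\}$ are exactly $\tau\cup\{i,i+2,i+4\}$ with $\tau$ ranging over $\{i-1\}$, $\{i+1\}$, $\{i+5\}$ (from types (1),(2),(3) of Lemma~\ref{lem:facets-of-D}), whose join is $\overline{\{i,i+2,i+4\}}*\partial\overline{\{i-1,i+1,i+5\}}$.

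For the inductive step, assume the lemma holds for $k$; by part (2) for $k$, the stars $\st(\{i-k,i+2,i+4\},\Gamma_{n,k}) = \overline{\{i-k,i+2,i+4\}}*\partial\overline{\{i-k-1,i+1,i+5\}}$ are exactly of the form $\overline{A}*\partial\overline{B}$ with $|A|+|B|=3+3=6=\dim\Gamma_{n,k}+2$, so the bistellar flip replacing each by $\partial\overline{A}*\overline{B} = \partial\overline{\{i-k,i+2,i+4\}}*\overline{\{i-k-1,i+1,i+5\}}$ is legitimate; moreover the observation recorded in Definition~\ref{def:Gamma_n,k} that no two of these stars share a facet (which I should verify: the facets of $\st(\{i-k,i+2,i+4\})$ all lie in $[i-k-1,i+5]$ and contain $i+2$ and $i+4$, so distinct $i$ give disjoint facet sets) guarantees the flips can be performed simultaneously, yielding $\Gamma_{n,k+1}$. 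Then I need to track what happens to the facet list. The facets destroyed are the six facets $\{i-k,i+2,i+4\}\cup\{i-k-1,i+1\}$, $\cup\{i-k-1,i+5\}$, $\cup\{i+1,i+5\}$ for each $i$; the facets created are the six $\{i-k-1,i+1,i+5\}\cup\{i-k,i+2\}$, $\cup\{i-k,i+4\}$, $\cup\{i+2,i+4\}$. Comparing with the claimed list (1)(a)--(d) for $k+1$: the only affected types are the ``diagonal'' families (1)(c) and (1)(d). I expect that $\{i-k,i+1,i+2,i+4,i+5\}$ (old (1)(c)) is destroyed and $\{i-k-1,i+1,i+2,i+4,i+5\}$ (new (1)(c), with parameter shifted) is created, and that $\{i-k-1,i-k,i+1,i+4,i+5\}$ — the $\ell=k+1$ instance of the new (1)(d) — is among the created facets, while all the $\ell\le k$ instances of (1)(d) and all of (1)(a),(1)(b) survive untouched because their vertex sets are not among the six destroyed facets (here one uses the index constraints $i+k+1<j$, $i+k+2<j$ to see the (a),(b) facets avoid the flip region; the $\ell\le k$ (d)-facets avoid it because they contain $i+2$... wait — they do not contain $i+2$, so a short check is needed that $\{i-\ell,i-\ell+1,i+1,i+4,i+5\}$ is never one of the destroyed facets, which holds since a destroyed facet has the form $*\cup\{i+2,i+4\}$ or contains both $i-k-1$ and one of $i+1,i+5$ together with $\{i-k,i+2,i+4\}$, none matching the (d)-pattern). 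After establishing the new facet list (1), parts (2) and (3) for $k+1$ follow by the same local inspection as in the base case, now applied to the updated list — i.e., the facets of $\Gamma_{n,k+1}$ containing $\{i-(k+1),i+2,i+4\}$ are precisely the new (1)(c) facet, the new $\ell=k+1$ (1)(d) facet, and one more (an (a)- or (b)-type facet $\{i+2,i+3...\}$-shaped one adjacent on the right), whose join has the asserted form $\overline{\{i-k-1,i+2,i+4\}}*\partial\overline{\{i-k-2,i+1,i+5\}}$, and $\{i-k-2,i+1,i+5\}$ is checked not to be a face.

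The main obstacle I anticipate is the bookkeeping in the inductive step: precisely matching ``six facets destroyed, six created'' against the four-family list (1)(a)--(d) with its shifting index ranges, and in particular being careful about the boundary cases $i$ near $k+2$ and near $n-5$ (where some of the stars in Definition~\ref{def:Gamma_n,k} may be degenerate or where the $j\le n-3$ / $i\le n-5$ endpoint constraints interact), as well as confirming the non-face claims in parts (2)--(3) for the new complex. I would organize this by first writing down, as an explicit display, the symmetric-difference of facet sets induced by one layer of simultaneous flips, then checking family-by-family that the claimed list (1) for $k+1$ equals $\big(\text{list for }k \setminus \text{destroyed}\big)\cup\text{created}$. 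Once the facet list is pinned down, the star computations in (2) and the non-face statements in (3) are routine local verifications identical in spirit to the $k=0$ case, so the weight of the proof is entirely in that one combinatorial reconciliation.
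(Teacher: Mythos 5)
Your overall strategy — induction on $k$ with base case from Lemma~\ref{lem:facets-of-D}, then tracking which facets the simultaneous bistellar flips destroy and create — is exactly the paper's, and your identification of the three destroyed facets $\{i-k-1,i-k,i+1,i+2,i+4\}$, $\{i-k-1,i-k,i+2,i+4,i+5\}$, $\{i-k,i+1,i+2,i+4,i+5\}$ and the three created facets $\{i-k-1,i-k,i+1,i+2,i+5\}$, $\{i-k-1,i-k,i+1,i+4,i+5\}$, $\{i-k-1,i+1,i+2,i+4,i+5\}$ is correct (you write ``six'' but list three of each, which is the right count). However, there is a concrete error in your treatment of part (2): you claim the facets of $\Gamma_{n,k+1}$ containing $\{i-k-1,i+2,i+4\}$ are ``the new (1)(c) facet, the new $\ell=k+1$ (1)(d) facet, and one more,'' but the $\ell=k+1$ (1)(d) facet $\{i-k-1,i-k,i+1,i+4,i+5\}$ does not contain $i+2$ and hence does not lie in that star. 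The correct three are the new (1)(c) facet $\{i-k-1,i+1,i+2,i+4,i+5\}$ together with the (1)(a)-type facet $\{i-k-2,i-k-1,i+1,i+2,i+4\}$ and the (1)(b)-type facet $\{i-k-2,i-k-1,i+2,i+4,i+5\}$; their union is $\overline{\{i-k-1,i+2,i+4\}}*\partial\overline{\{i-k-2,i+1,i+5\}}$ as needed (and one then uses that the link of a $2$-face in a PL $4$-sphere is a cycle, so containing a $3$-cycle forces equality).

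There is also a genuine gap in your treatment of part (3). You write that $\{i-k-2,i+1,i+5\}$ ``is checked not to be a face'' by ``the same local inspection as in the base case,'' but the base case relied on the \emph{complete} facet list of $D(n,4)$ from Lemma~\ref{lem:facets-of-D}, and for $\Gamma_{n,k+1}$ you have only a partial list (recall part (1) asserts membership, not a full description; indeed the created facet $\{i-k-1,i-k,i+1,i+2,i+5\}$ is not of any of the forms (a)–(d)). Moreover the inductive hypothesis for $\Gamma_{n,k}$ gives non-faceness only of $\{i-k-1,i+1,i+5\}$, not of $\{i-k-2,i+1,i+5\}$, so it cannot be invoked directly. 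The paper closes this gap by a different argument: it observes that each flip removes exactly the single $2$-face $A=\{i-\ell,i+2,i+4\}$ and adds exactly the single $2$-face $B=\{i-\ell-1,i+1,i+5\}$ (since $A$, resp.\ $B$, is the unique $2$-face of $\overline{A}*\partial\overline{B}$, resp.\ $\partial\overline{A}*\overline{B}$, that lies in the interior of the ball), and hence, going all the way from $D(n,4)$ to $\Gamma_{n,k+1}$, the only $2$-faces ever introduced are $\{i-\ell-1,i+1,i+5\}$ for $0\le\ell\le k$; since $\{i-k-2,i+1,i+5\}$ is not among these and is not a $2$-face of $D(n,4)$, it is not a face of $\Gamma_{n,k+1}$. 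You would need to supply this or an equivalent global accounting of $2$-faces to complete your proof of part (3).
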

\noindent Note that the collection of sets in part 1(d) of this lemma is empty when $k=0$.

\begin{proof}
	The proof is by induction on $k$. That $\Gamma_{n,0}=D(n,4)$ satisfies these properties follows from Lemma \ref{lem:facets-of-D}.
	
	Assume that the statement holds for some $k\leq m$. To verify that the properties continue to hold for $\Gamma_{n,m+1}$, note that when applying bistellar flips to $\Gamma_{n,m}$, we replace the facets
	$$\{i-m-1, i-m, i+1, i+2, i+4\}, \{i-m-1, i-m, i+2, i+4, i+5\}, \{i-m, i+1, i+2, i+4, i+5\}$$ 
	with the facets
	$$\{i-m-1, i-m, i+1, i+2, i+5\}, \{i-m-1, i-m, i+1, i+4, i+5\}, \{i-m-1, i+1, i+2, i+4, i+5\}$$
	for all $m+2\leq i\leq n-5$. All other facets of $\Gamma_{n,m}$ remain facets of $\Gamma_{n, m+1}$. This implies that the first property continues to hold. 
	
	Since, in addition to $\{i-m-1, i+1, i+2, i+4, i+5\}$, the complex $\Gamma_{n,m+1}$ has $\{i-m-2, i-m-1, i+1, i+2, i+4\}$ and $\{i-m-2, i-m-1, i+2, i+4, i+5\}$ as its facets, we conclude that the second property also holds, that is, $$\st(\{i-m-1, i+2, i+4\},\Gamma_{n, m+1})=\overline{\{i-m-1, i+2, i+4\}}*\partial \overline{\{i-m-2,i+1, i+5\}}.$$
	
	Finally, to see that the third property holds, note that by Lemma \ref{lem:facets-of-D}, $\{i-m-2, i+1, i+5\}\notin \Gamma_{n,0}=D(n,4)$. The bistellar flips replace all $2$-faces of the form $\{i-\ell, i+2, i+4\}$ with $\{i-\ell-1,i+1, i+5\}$, for all $\ell$ and $i$ such that $0\leq \ell\leq m$ and $\ell+2\leq i\leq n-5$, but do not introduce any other $2$-faces. Hence $\{i-m-2, i+1, i+5\}\notin \Gamma_{n,m+1}$.
\end{proof}

The significance of the complexes $\Gamma_{n,k}$ is explained by the following lemma. The beginning of the proof of this lemma is similar to that of \cite[Proposition 6.11]{NZ-transversal}.

\begin{lemma}  \label{lem: (k+5)/2(k+6)}
	Let $k\geq 1$. Then $\liminf_{n\to\infty} \tau(\Gamma_{n, k})\geq \frac{k+4}{2(k+5)}.$
\end{lemma}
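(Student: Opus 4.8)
The plan is to bound $\tau(\Gamma_{n,k})$ from below by showing that every transversal $\mathcal{T}$ of $\Gamma_{n,k}$ must be large, using the rich supply of facets listed in part 1 of Lemma~\ref{lm: gamma_nk}. Write $\mathcal{T}^c=[n]\setminus\mathcal{T}$. The key facets to exploit are those of type 1(a) and 1(b): for $i\geq 1$ and $j$ sufficiently far to the right of $i$, both $\{i,i+1,j,j+1,j+3\}$ and $\{i,i+1,j,j+2,j+3\}$ are facets. Reading this the way the proof of Theorem~\ref{prop:T(F_s)} reads the Gale condition, I would argue as follows: partition (most of) $[n]$ into consecutive blocks and, within $\mathcal{T}^c$, look at consecutive differences $a_{t+1}-a_t$. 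If $\mathcal{T}$ misses a pair $\{i,i+1\}$ on the left and, far to the right, misses either a pair $\{j,j+1\}$ followed within distance $3$ by $j+3$, or a pair $\{j,j+2,j+3\}$ pattern, we get a facet disjoint from $\mathcal{T}$. Forcing $\mathcal{T}$ to hit all such configurations should pin the density of $\mathcal{T}^c$ down to roughly $\tfrac{1}{2}-\tfrac{1}{2(k+6)}$ of $[n]$, i.e. $|\mathcal{T}|\geq n\cdot\tfrac{k+5}{2(k+6)}-O(1)$; the $k+6$ in the denominator will come from the ``width'' $k+5$ or $k+6$ of the forbidden configurations together with the requirement that $j$ be at least $i+k+2$ to the right.

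More concretely, I would first handle a single ``period'': show that inside any window of the form $[i, i+k+6]$ (or whatever the exact width turns out to be), a transversal cannot omit too many vertices without exposing one of the facets in 1(a)--1(d). The facets of types 1(c) and 1(d), namely $\{i-k,i+1,i+2,i+4,i+5\}$ and $\{i-\ell,i-\ell+1,i+1,i+4,i+5\}$, are the ones that let me transfer ``missed pairs near $i$'' into ``missed structure near $i+4,i+5$'' and thereby chain the local constraints into a global linear lower bound — this is exactly the role that the various types of facets play in Lemma~\ref{lem:4kn} and in Theorem~\ref{prop:T(F_s)}. I would set up a counting argument on the multiset of gaps of $\mathcal{T}^c$ analogous to the $d=2k+1$ case of Theorem~\ref{prop:T(F_s)}: classify indices according to whether the local gap pattern is ``small'' (dangerous) or ``large'', bound the number of small patterns by something like $O(k)$ via the pigeonhole-style argument (if $k$ of them occur with the same offset, a facet of type 1(c)/1(d) becomes disjoint from $\mathcal{T}$), and conclude that all but $O(k)$ of the $\approx n/(k+6)$ windows force $|\mathcal{T}\cap\text{window}|\geq (k+5)/2$ up to rounding.

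To finish, I would also exhibit (or at least reference, since the lemma only asserts a $\liminf$ lower bound and the upper bound is presumably proved separately) that this bound is essentially tight by producing an explicit transversal of the stated density, which is what makes the construction worthwhile; a natural candidate is a periodic set like $\{1,3,5,\dots\}$ suitably perturbed every $k+6$ steps to account for the retriangulated stars, mirroring the transversals $\{1,3,5,\dots\}\cap[n]$ used for $D(n,2k-1)$ and the $\{2,4,6,\dots\}\cup\{4k-1,8k-1,\dots\}$ transversal in Lemma~\ref{lem:4kn}.

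The main obstacle I anticipate is bookkeeping the boundary effects and the interaction between the ``left'' missed pair $\{i,i+1\}$ and the ``right'' structure $\{j,j+1,j+3\}$ or $\{j,j+2,j+3\}$ when $j-i$ ranges over an interval rather than being fixed: one has to argue that a transversal cannot simultaneously be sparse on the left and sparse on the right without some admissible choice of $j$ producing an actual facet of $\Gamma_{n,k}$ (respecting the constraints $i+k+1<j$ or $i+k+2<j$ and $j\leq n-3$). Making the pigeonhole over the offset $\ell$ in facet types 1(d) interact cleanly with the $k$-fold repetition needed to expose a $5$-element facet — and getting the constant in front to be exactly $\frac{k+5}{2(k+6)}$ rather than something weaker — is where the real care is needed; everything else is a routine adaptation of the gap-counting arguments already used for Theorem~\ref{prop:T(F_s)} and Lemma~\ref{lem:4kn}.
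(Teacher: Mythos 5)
Your overall strategy --- lower-bound every transversal $\T$ by a gap analysis of $\T^c$ driven by the facets in part 1 of Lemma~\ref{lm: gamma_nk}, then amortize --- is the same as the paper's, and your identification of the 1(d) facets as the source of the improvement from $2/5$ to $\tfrac{k+5}{2(k+6)}$ is on target. But two of your concrete mechanisms are wrong or missing. First, the pigeonhole you import from Theorem~\ref{prop:T(F_s)} (``if $k$ of them occur with the same offset, a facet becomes disjoint from $\T$'') does not match the structure of $\Gamma_{n,k}$: its facets are single $5$-element sets consisting of \emph{one} pair $\{i,i+1\}$ on the left and \emph{one} triple-like configuration on the right at distance exceeding $k+1$, not unions of $k$ equally spaced pairs. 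The correct use of these facets is pairwise: any \emph{two} bad configurations at an admissible distance already produce a facet disjoint from $\T$, and this is what confines all long runs of $\T^c$ (and all isolated singletons of $\T$ preceded by a run of $\T^c$) to a single exceptional window of size $O(k)$ --- the paper's Claims 1--3. Your $k$-fold pigeonhole would not yield this confinement.

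Second, and more importantly, your fixed partition into windows of width $\approx k+6$, each claimed to contain $\geq (k+5)/2$ elements of $\T$, is asserted without a mechanism and is not how the constant arises. No single facet forces that many transversal elements into an arbitrary window, and the facets of types 1(a)--(d) need not be aligned with any fixed partition. The paper's argument is adaptive: after removing the $O(k)$ exceptional set, decompose the rest into adjacent pairs $(I,J)$ with $I\subseteq\T$, $J\subseteq\T^c$, where either $|I|\geq 2,\,|J|\leq 3$ or $|I|\geq|J|$; this alone gives only $|\T|\geq 2n/5-O(k)$. The upgrade comes from showing that two consecutive low-density blocks of type $(2,3)$, say starting at $i$ and $j$, force the set $\{i+3,i+4,j-1,j+2,j+3\}$ to avoid being a 1(d) facet, hence $(j-1)-(i+4)\geq k+1$; the intervening stretch of length $X\geq k+1$ has $\T$-density $\geq 1/2$, and the amortized density is $\frac{2+X/2}{5+X}\geq\frac{1}{2}\bigl(1-\frac{1}{k+6}\bigr)=\frac{k+5}{2(k+6)}$. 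This spacing argument is the missing core of your plan; without it you do not get past $2/5$. (Your final paragraph about exhibiting a matching transversal is unnecessary: the lemma asserts only a lower bound on $\tau$, which requires only the lower bound on $|\T|$.)
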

\begin{proof} Consider a transversal $\T$ of $\Gamma_{n, k}$. We are interested in the sizes of maximal w.r.t.~inclusion intervals $[i,j]=\{i,i+1,\ldots,j\}$ contained in $\T^c$ and $\T$, respectively. We start by establishing the following claims. (To avoid any possible confusion, we note that the size of $[i,j]$ is $j-i+1$.)

\smallskip\noindent{\bf Claim 1:} No interval in $\T^c$ can have size $\geq 7$.

If $[j, j+6]\subseteq \T^c$ for some $j$, then $\{j, j+1, j+2, j+5, j+6\}$ is a facet of $\Gamma_{n, k}$ (see Lemma \ref{lm: gamma_nk} part 1(d)). However, this facet is disjoint from $\T$, contradicting  our assumption that $\T$ is a transversal. \hfill$\square$\medskip

\smallskip\noindent{\bf Claim 2}: There exists a subset $[a,b]\subset [n]$ with $b-a\leq k+4$ that contains {\bf all} intervals of $\T^c$ of size $\geq 4$.

If $\T^c$ contains at most one interval of size $\geq 4$, then since $k\geq 1$, the result follows from Claim 1. Otherwise, let $[i,i+p]$ and $[j,j+q]$ be the left-most and the right-most such intervals (and so $p,q\geq 3$). We must have $j+q-i\leq k+4$, or else $\{i,i+1,j+q-3,j+q-2,j+q\}$ would satisfy $(j+q-3)-i>k+1$, and hence (by Lemma \ref{lm: gamma_nk} part 1(a)) it would be a facet of $\Gamma_{n,k}$ that is disjoint from $\T$. The claim follows. \hfill$\square$\medskip

\smallskip\noindent{\bf Claim 3:} Let $\{i_1\},\dots,\{i_p\}$ be the list of all maximal intervals in $\T$ of size $1$, each of which is followed by an interval in $\T^c$ of size $\geq 2$. Then $i_p-i_1\leq k+4$.

If  $\{i_1\},\dots,\{i_p\}$ are such intervals with $i_1<\dots<i_p$ and $i_p-i_1>k+4$, then $F:=\{i_1+1, i_1+2, i_p-1, i_p+1,i_p+2\}$ is contained in $\T^c$. However, $(i_p-1)-(i_1+1)=(i_p-i_1)-2 >k+2,$ and so (by Lemma \ref{lm: gamma_nk} part 1(b)) $F$ is a facet of $\Gamma_{n,k}$ that is disjoint from $\T$,  contradicting our assumption that $\T$ is a transversal.  \hfill$\square$\medskip

Let $C\subset [n]$ be the union of the following sets:
\begin{itemize}
\item the smallest interval $[a,b]$ from Claim 2 that contains all intervals of $\T^c$ of size $\geq 4$;
\item the smallest interval that contains all $\{i_1\},\dots,\{i_p\}$  from Claim 3, together with the interval from $\T^c$ that trails $\{i_p\}$;
\item if $1\in\T^c$, then also the left-most interval of $\T^c$.
\end{itemize}
The above three claims guarantee that $|C|=O(k)$, and by the definition of $C$, $[n]\backslash C$ is the union of at most three intervals. Each such interval $[x,y]$
can be written as the disjoint union of pairs of adjacent intervals $(I, J)$ such that $I \subseteq \T$, $J \subseteq \T^c$,  and each pair $(I, J)$ satisfies (a) $|I|=2$, $|J|= 3$ or (b) $|I|\geq |J|\geq 1$. (We also allow $J=\emptyset$ for the last pair $(I,J)$ of $[x,y]$). This already implies that $|\T|=2n/5-O(k)$, which is a weaker bound than the one promised in the statement. To improve this bound we make use of the facets of $\Gamma_{n,k}$ described in Lemma \ref{lm: gamma_nk} part 1(d).

 For convenience, we say that a pair $(I,J)$ is {\em of type} $(|I|,|J|)$. As above, consider the interval $[x,y]$ (which is one of at most three intervals comprising $[n]\backslash C$).
Assume that $I=\{i, i+1\}$ and $J=\{i+2,i+3,i+4\}$ form a pair $(I,J)$ of type $(2,3)$ in $[x,y]$, and let $I'=\{j, j+1\}$ and $J'=\{j+2,j+3,j+4\}$ be the nearest pair of type $(2,3)$ to the right of $(I,J)$ in $[x,y]$. (The case where $(I,J)$ is the right-most pair of $[x,y]$ of type $(2,3)$, and so no such $(I',J')$ exists, is discussed at the end of the proof.) Here $I, I'$ are subsets of $\T$, while $J$ and $J'$ are subsets of $\T^c$, and $j-1\in \T^c$ with $j-1\geq i+4$. Consider the set $F:=\{i+2,i+3,j-1,j+2,j+3\}$. This set is disjoint from $\T$, so it is not a facet, which  by Lemma \ref{lm: gamma_nk} part 1(d) means that we must have $(j-1)-(i+3)\geq k+1$. 

Consequently, the interval $[i+5, j-1]$ has size $X\geq k$ and, by our assumption that there are no pairs of type $(2,3)$ in this interval, it follows that $[i+5, j-1]$ is a disjoint union of pairs as in (b), i.e., pairs $(\tilde{I},\tilde{J})$ with $|\tilde{I}| \geq|\tilde{J}|\geq 1.$
In particular, at least half of the elements of the interval  $[i+5,j-1]$ are in $\T$. Therefore, 
$$\frac{\left|\T\cap [i, j-1]\right|}{\left|[i,j-1]\right|} \geq \frac{2+ X/2}{5+X}=\frac{X+4}{2(X+5)} =\frac{1}{2}\left(1-\frac{1}{X+5}\right) \geq  \frac{1}{2} \left(1-\frac{1}{k+5}\right).$$  This argument applies to every pair $(I,J)$ of type $(2,3)$ in $[x,y]$ but the right-most one. Furthermore, all pairs $(\tilde{I},\tilde{J})$ in $[x,y]$ to the left of the left-most pair of type $(2,3)$ satisfy  $|\tilde{I}| \geq |\tilde{J}|\geq 1$ and so do all pairs to the right of the right-most pair of type $(2,3)$. This shows that  $|\T\cap[x,y]|\geq \frac{1}{2}\left(1-\frac{1}{k+5}\right)|[x,y]|-O(1)$ for each of at most three disjoint intervals comprising $[n]\backslash C$. Since $|[n]\backslash C|=n-O(k)$, 
we conclude that
$$
|\T|\geq  \frac{1}{2}\left(1-\frac{1}{k+5}\right)n-O(k).
$$
The statement follows.
\end{proof}

Since the complexes $\Gamma_{n,k}$ are obtained from $D(n,4)$ by a sequence of bistellar flips, all these complexes are PL $4$-spheres. Thus Theorem \ref{thm:1/2-in-dim4} follows by applying Lemma \ref{lem: (k+5)/2(k+6)} for values of $k\to\infty$. In fact, one can easily check that for all $n$ and $k$, $\{1,3,5,\dots\}\cap[n]$ is a transversal of $\Gamma_{n,k}$, and so using these spheres, we cannot beat the  $1/2$ bound.

\begin{remark}
The facets of $D(n,4)$ listed in 
Lemma \ref{lm: gamma_nk} (for $k=0$) are also the positive facets of the cs neighborly $4$-sphere $\Delta^4_n$; see \cite{NZ-cs-neighborly, NZ-cs-neighborly-new} and Remark \ref{rm:D(n,d)-vs-positive}. Hence, similar to the case of $\Gamma_{n, k}$, one can start with $\Delta^4_n$ and apply bistellar flips described in Definition \ref{def:Gamma_n,k} to the stars of certain positive faces as well as to the stars of their antipodes to produce a sequence of cs $4$-spheres whose transversal ratios converge to $1/2$ as $n\to\infty$.
\end{remark}

\section{Open problems}
We conclude with several open problems. We think of $\tau^P_d$ and $\tau^S_d$ as $[0,1]$-valued functions of $d$. Our current knowledge of the values of these functions can be summarized as follows:
 $$\tau^P_2=\tau^P_3=1/2, \mbox{ and }\tau^P_{2k}\geq 1/2,\; \tau^P_{2k+1}\geq 2/5, \mbox{ for } k\geq 2;$$
$$\tau^S_2=\tau^S_3=1/2, \; \tau^S_4\geq 4/7, \;\tau^S_5\geq 1/2, \; \tau^S_6\geq 6/11,  \mbox{ and }\tau^S_{2k}\geq 1/2,\; \tau^S_{2k-1}\geq 2/5, \mbox{ for } k\geq 4.$$

It would be most desirable to determine explicit values of $\tau^P_d$ and $\tau^S_d$ for some $d\geq 4$. While this is completely out of reach at the moment, the following problems might be more accessible. To start, observe that by Proposition \ref{prop: pure}, for any $n^{(d+1)/2} \gg m \gg n$, there exists a  sequence of pure simplicial complexes of dimension $d-1$ with $n$ vertices and $\Theta(m)$ facets whose transversal ratios converge to $1$ as $n\to\infty$. In stark contrast, despite the fact that the numbers of facets of simplicial $(d-1)$-spheres with $n$ vertices range from $\Theta(n)$ to $\Theta(n^{\lfloor d/2\rfloor})$ (see \cite{Barnette-LBT-pseudomanifolds, Barnette73,McMullen70, Stanley75}), when $d>6$, we do not have a single example of a simplicial $(d-1)$-sphere whose transversal ratio is larger than $1/2$; yet for $d\geq 4$, we also do not have any non-trivial upper bounds on $\tau^P_d$ and $\tau^S_d$. This leads to

\begin{problem}
	For a fixed $d\geq 4$, are $\tau^P_d$ and $\tau^S_d$ bounded away from 1?
\end{problem}
A variation of the above problem can be found in \cite[Section 10.3]{Alonetall02}. The next problem concerns the asymptotic behavior of $\{\tau^P_d\}$ and $\{\tau^S_d\}$.
\begin{problem}
	Is it true that $\lim_{d\to \infty} \tau^P_d= \lim_{d\to \infty} \tau^S_d=1$? 	Are the sequences $\{\tau^P_d\}$ and $\{\tau^S_d\}$ (weakly) increasing? Or at least, are the sequences $\{\tau^P_{2k}\}$, $\{\tau^P_{2k+1}\}$, $\{\tau^S_{2k}\}$, and $\{\tau^S_{2k+1}\}$ (weakly) increasing?
\end{problem}

It is also worth mentioning that, at the moment, we do not know  whether the infinite families of spheres of dimension 3, 4, and 5 constructed in Section 5 are polytopal or not. (We expect they are not polytopal.) Consequently, there are gaps between the existing lower bounds on $\tau^P_d$ and $\tau^S_d$ for $4\leq d\leq 6$. This prompts us to ask
\begin{problem}
	Is $\tau^P_d=\tau^S_d$ for all $d\geq 4$?
\end{problem}

Finally, making any additional progress on Tur\'an's problem, especially for such classes of pure simplicial complexes as pseudomanifolds or Eulerian complexes, would be of great importance.

\section*{Acknowledgments}  We are grateful to the anonymous referees for pointing out mistakes in earlier versions of Theorem \ref{thm:3-sphere-5/8}  and Lemma \ref{lem: (k+5)/2(k+6)} 
as well as for many other helpful suggestions that improved the presentation.
	{\small
	\bibliography{refs}
	\bibliographystyle{plain}
	}
\end{document}